\documentclass[10pt,a4paper]{amsart}

\usepackage[T1]{fontenc}
\usepackage[utf8]{inputenc}
\usepackage{lmodern}
\usepackage{amsmath,amssymb,amsfonts,mathtools}
\usepackage{mathrsfs}
\usepackage{enumitem}
\usepackage{microtype}
\usepackage[hidelinks]{hyperref}

\newcommand{\C}{\mathbb C}
\newcommand{\Ptwo}{\mathbb P^2_{\C}}
\newcommand{\Pdual}{\check{\mathbb P}^{2}_{\C}}
\newcommand{\PSL}{\mathrm{PSL}}
\newcommand{\PGL}{\mathrm{PGL}}
\newcommand{\Fl}{\mathrm{Fl}}
\newcommand{\Eq}{\mathrm{Eq}}
\newcommand{\Kul}{\mathrm{Kul}}
\newcommand{\Myr}{\mathrm{Myr}}
\newcommand{\CoG}{\mathrm{CoG}}
\newcommand{\Thinc}{\mathrm{Th}_{\mathrm{inc}}}
\newcommand{\BSPG}{\mathcal B_{\mathrm{SP},G}}
\newcommand{\BFG}{\mathcal B_{\mathrm F,G}}
\newcommand{\SP}{\mathrm{SP}}
\DeclareMathOperator{\Ker}{Ker}
\DeclareMathOperator{\Image}{Im}
\DeclareMathOperator{\Stab}{Stab}
\DeclareMathOperator{\Hom}{Hom}

\newtheorem{theorem}{Theorem}[section]
\newtheorem{proposition}[theorem]{Proposition}
\newtheorem{lemma}[theorem]{Lemma}
\newtheorem{corollary}[theorem]{Corollary}
\theoremstyle{definition}
\newtheorem{definition}[theorem]{Definition}
\theoremstyle{remark}
\newtheorem{remark}[theorem]{Remark}

\newtheorem*{maintheoremone}{Theorem 1}
\newtheorem*{maintheoremtwo}{Theorem 2}
\newtheorem*{maintheoremthree}{Theorem 3}
\newtheorem*{maintheoremfour}{Theorem 4}

\title[ASYMPTOTIC BOUNDARIES AND LIMIT SETS]{ASYMPTOTIC FLAG GEOMETRY\\OF COMPLEX KLEINIAN GROUPS IN $\Ptwo$}
\author{Waldemar Barrera}
\author{Angel Cano}
\author{Juan Pablo Navarrete}
\author{José Seade}

\subjclass[2020]{Primary 37F99; Secondary 30F40, 20H10, 57M60}

\begin{document}
\begin{abstract}
We organize several natural notions of limit set for discrete subgroups of $\PSL(3,\C)$ around a common asymptotic structure encoded by pseudo--projective degeneration and by the full or partial flag data carried by divergent sequences.  In the $\theta$-divergent case, the two projections of the full-flag limit set recover the attracting and repelling projective boundary sets, while the Myrberg limit set is the union of the limiting projective lines.  Thus the equicontinuity region is the complement of a canonical line configuration; under the usual three-line general-position hypothesis, this same configuration is the Kulkarni limit set and the equicontinuity and Kulkarni ordinary regions coincide.

This description has direct consequences for the complex geometry of the ordinary set.  We prove a local five-line criterion for complete Kobayashi hyperbolicity and hyperbolic embedding and, together with the known four-line classification and line-counting theorem, obtain the global statement that four projective lines in general position in the Kulkarni limit set already force every ordinary component to be complete Kobayashi hyperbolic and hyperbolically embedded in $\Ptwo$; these components are also taut, pseudoconvex, Stein, and domains of holomorphy.  Finally, for every $r\ge2$ we construct a nonempty open Schottky locus in $\Hom(F_r,\PSL(3,\C))$ whose groups are strongly irreducible, $\theta$-divergent, and satisfy $\nu=\infty$.  This locus is Zariski dense and has the same Kobayashi geometry.  Passing from marked representations to image groups gives the corresponding density statement in the final Zariski--Chabauty topology.  A Zassenhaus argument shows at the same time that this genericity does not extend to algebraic or strong convergence.  The resulting picture gives a projective counterpart to part of Sullivan's dictionary, with asymptotic flag data, limit sets, and the complex geometry of the ordinary region governed by the same projective configuration.
\end{abstract}

\maketitle

\section*{Introduction}
For a classical Kleinian group acting on the Riemann sphere, the limit set is the fundamental asymptotic object of the action.  It records the accumulation of orbits, separates the domain of discontinuity from the part where the essential dynamics takes place, and provides the natural boundary on which much of the geometry of the group is encoded.  This is one of the starting points of Sullivan's dictionary between Kleinian groups and holomorphic dynamics.

For discrete groups acting on higher-dimensional complex projective spaces, the picture is considerably less unified.  Already for discrete subgroups of $\PSL(3,\C)$ acting on $\Ptwo$, several notions of limit set arise naturally from different aspects of the dynamics.  The Kulkarni and Myrberg limit sets are associated with proper discontinuity and equicontinuity; the Conze--Guivarc'h limit set records attracting directions of proximal elements; and the Cartan limit set belongs naturally to the geometry of flag spaces.  These objects come from rather different constructions, and a priori there is no reason for them to be manifestations of the same geometry.

The main purpose of this paper is to show that they are organized by a common asymptotic structure.  This structure appears when one studies divergent sequences of projective transformations.  Their degenerations produce points, projective lines, and incidence relations between them, hence full or partial flag data.  The same data determine the asymptotic boundary of the group and, through its line components, control the ordinary region and its complex geometry.

The pseudo--projective viewpoint used here has a precursor in the $\mathrm{PU}(1,n)$ equicontinuity analysis of Cano--Seade~\cite{CS10}.  For discrete subgroups of $\PSL(3,\C)$, equicontinuity and pseudo--projective boundary methods were developed in~\cite{BCN11,CCL26}; the geometry and counting of projective lines in Kulkarni limit sets were studied in~\cite{BCN16}; and projective duality together with the extended Conze--Guivarc'h limit set was developed in~\cite{BGN18}.  For the complementary classification of elementary groups associated with finite general-position line configurations, see~\cite{BCNS25}.  Earlier developments of higher-dimensional complex Kleinian groups and their domains of discontinuity may be found in~\cite{SV01,SV02}; see also~\cite{CNS13} for a systematic account.  The basic degeneration trichotomy used below already appears, in a form adapted to maximal regular regions, in~\cite{CCL26}.

Our aim is not to regard these ingredients as separate constructions, but to place them in one asymptotic framework and make their compatibility explicit.  We use two projective facts from the preceding literature without reproving them: the description of the equicontinuity obstruction through kernels of pseudo--projective boundary maps~\cite{BCN11,CCL26}, and the extended Conze--Guivarc'h limit set introduced by Barrera--Gonz\'alez-Urquiza--Navarrete through the action on the dual projective plane~\cite{BGN18}.  We keep the latter terminology in its original dual sense.

There is, on the other hand, a well-developed higher-rank language for the flag-theoretic side of this picture.  Benoist introduced limit sets in full flag varieties for Zariski-dense linear semigroups~\cite{Benoist97}, and Kapovich--Leeb--Porti developed small flag limit sets and convergence dynamics for regular discrete subgroups of semisimple Lie groups~\cite{KLP18}.  In type $A_2$, the $\theta$-divergence condition used here is precisely a full-chamber regularity condition of this kind.  The point for us is that, in $\PSL(3,\C)$, the flag and projective languages can be identified very concretely: the two projections of the full-flag limit set recover the two pseudo--projective boundary sets, and the corresponding family of projective lines is exactly the Myrberg obstruction.  Under the usual three-line general-position hypothesis, it is also the Kulkarni limit set.

The elementary mechanism behind this picture is a trichotomy for divergent sequences in $\PSL(3,\C)$.  After passing to a subsequence, every such sequence has one of three possible types of degeneration: a full-flag degeneration, a point-to-line degeneration, or a line-to-point degeneration.  The sequence and its inverse record, in a symmetric way, the collapsing and concentrating directions.  We shall use the following form.

\begin{maintheoremone}
Let $G\subset \PSL(3,\C)$ be a discrete subgroup, and let $(g_n)\subset G$ be a sequence of pairwise distinct elements. Then, after passing to a subsequence, exactly one of the following mutually exclusive alternatives occurs:
\begin{enumerate}[label=(\arabic*)]
\item \emph{Full-flag case.} There exist full flags $(p^{\pm},\ell^{\pm})$ in $\Ptwo$, with $p^{\pm}\in\ell^{\pm}$, such that $g_n$ converges locally uniformly on $\Ptwo\setminus\ell^+$ to the constant map $p^-$, and $g_n^{-1}$ converges locally uniformly on $\Ptwo\setminus\ell^-$ to $p^+$. Moreover, points on $\ell^+\setminus\{p^+\}$ have $\ell^-$ as their set of destinations.
\item \emph{Point-to-line case.} There exist a point $p^+\in\Ptwo$ and a projective line $\ell^-\subset\Ptwo$ such that, outside $p^+$, the sequence $(g_n)$ converges to a pseudo--projective map with image $\ell^-$, while $(g_n^{-1})$ collapses $\Ptwo\setminus\ell^-$ to $p^+$.
\item \emph{Line-to-point case.} There exist a projective line $\ell^+\subset\Ptwo$ and a point $p^-\in\Ptwo$ such that, outside $\ell^+$, the sequence $(g_n)$ collapses to $p^-$, while $(g_n^{-1})$ converges to a pseudo--projective map with image $\ell^+$.
\end{enumerate}
\end{maintheoremone}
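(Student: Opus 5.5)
We would prove the trichotomy through the Cartan ($KAK$) decomposition. Lift each $g_n$ to $\mathrm{SL}(3,\C)$ and write $g_n=k_n a_n k_n'$ with $k_n,k_n'\in \mathrm{SU}(3)$ and $a_n=\mathrm{diag}(\sigma_1(n),\sigma_2(n),\sigma_3(n))$, where $\sigma_1\ge\sigma_2\ge\sigma_3>0$ and $\sigma_1\sigma_2\sigma_3=1$. By compactness of $\mathrm{SU}(3)$, after passing to a subsequence we may assume $k_n\to k$ and $k_n'\to k'$. Since $G$ is discrete and the $g_n$ are pairwise distinct, the sequence leaves every compact set, so $\sigma_1(n)/\sigma_3(n)\to\infty$. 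We pass to a further subsequence so that the two ratios $\sigma_1/\sigma_2$ and $\sigma_2/\sigma_3$ converge in $[1,\infty]$. At least one of these limits is $\infty$, which leaves three mutually exclusive possibilities:
\begin{enumerate}[label=(\alph*)]
\item both ratios tend to $\infty$;
\item $\sigma_1/\sigma_2$ stays bounded and $\sigma_2/\sigma_3\to\infty$;
\item $\sigma_1/\sigma_2\to\infty$ and $\sigma_2/\sigma_3$ stays bounded.
\end{enumerate}
These correspond respectively to the full-flag, point-to-line and line-to-point cases. Mutual exclusivity is intrinsic: it is invariant under changing the subsequence once the limits exist, and it is also seen from the rank of the limiting pseudo--projective map.

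The pseudo--projective limit is computed directly. Normalize $a_n/\sigma_1(n)$. In case (a) it converges to $\mathrm{diag}(1,0,0)$. In case (b) it converges to $\mathrm{diag}(1,\lambda,0)$ with $\lambda\neq0$. In case (c) it converges to $\mathrm{diag}(1,0,0)$ again; there the distinction appears only for the inverse. Set $e_1,e_2,e_3$ the standard basis. The limit of $g_n$ is the pseudo--projective map $\tau=k\,\mathrm{diag}(\cdot)\,k'$. It converges locally uniformly off $\mathbb P(\Ker\tau)$, with image $\mathbb P(\Image\tau)$.

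Now we read off the data in each case.
\begin{itemize}[label=$\bullet$]
\item In cases (a) and (c), $\Ker\tau=k'^{-1}\langle e_2,e_3\rangle$ is a line $\ell^+$ and $\Image\tau=\{p^-\}$ with $p^-=[k e_1]$.
\item In case (b), the kernel is the point $p^+=[k'^{-1}e_3]$ and the image is the line $\ell^-=[k\langle e_1,e_2\rangle]$.
\end{itemize}
Apply the same procedure to $g_n^{-1}=k_n'^{-1}a_n^{-1}k_n^{-1}$, whose singular values in decreasing order are $\sigma_3^{-1}\ge\sigma_2^{-1}\ge\sigma_1^{-1}$. The roles of the two ratios swap, so case (b) for $g_n$ becomes case (c) for $g_n^{-1}$ and vice versa. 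This yields:
\begin{itemize}[label=$\bullet$]
\item in case (b), $g_n^{-1}$ collapses $\Ptwo\setminus\ell^-$ to $p^+=[k'^{-1}e_3]$;
\item in case (c), $g_n^{-1}$ has kernel point $p^-$ and image line $\ell^+$.
\end{itemize}
Both are exactly statements (2) and (3). In case (a), $g_n^{-1}$ collapses the complement of $\ell^-:=k\langle e_1,e_2\rangle$ to $p^+:=[k'^{-1}e_3]$. The incidences $p^+\in\ell^+$ and $p^-\in\ell^-$ are then visible, giving the full flags.

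The main obstacle is the final clause of (1): points $x\in\ell^+\setminus\{p^+\}$ have all of $\ell^-$ as their set of destinations. Here the mere limits of $k_n,k_n'$ are not enough. In coordinates $x=k'^{-1}[0:u:v]$ with $u\ne0$, we get $g_nx\approx k_n[0:\sigma_2u_n:\sigma_3v_n]$, so $g_nx$ converges to $k e_2\in\ell^-$. To obtain the whole line we must consider sequences $x_n\to x$, since destinations are defined via nearby points. For this we choose $x_n=k_n'^{-1}[\epsilon_n:u:v]$, with $\epsilon_n$ tuned so that $\sigma_1\epsilon_n/\sigma_2$ tends to any prescribed value in $\C\cup\{\infty\}$. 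This is possible precisely because $\sigma_1/\sigma_2\to\infty$, and it forces $\epsilon_n\to0$. Then $g_nx_n\to k[t:1:0]$ for arbitrary $t$, which sweeps out $\ell^-$. We would also correct for the discrepancy between $k_n'$ and $k'$ by absorbing it into the choice of $x_n$. Conversely, every limit of $g_nx_n$ with $x_n\to x\notin\{p^+\}$ lies in $\ell^-$, because the $e_3$-component is dominated. We expect this bookkeeping with moving Cartan frames to be the only delicate point.
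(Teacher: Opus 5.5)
Your proposal is correct and follows essentially the same route as the paper: a Cartan decomposition with convergent unitary factors, a trichotomy according to which of the two simple gaps diverges, kernels and images read off the normalized diagonal limits (with inversion swapping the gaps), and the destination computation obtained by tuning a vanishing first coordinate against $\sigma_1/\sigma_2\to\infty$. Two points are more careful than the paper. Your choice $x_n=k_n'^{-1}[\epsilon_n:u:v]$ handles the moving frames explicitly, where the paper simply reduces to the diagonal model. You also justify mutual exclusivity through the rank of the pseudo-projective limit of $g_n^{-1}$, which the paper only asserts.
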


The significance of the trichotomy becomes apparent when one considers simultaneously the asymptotic data carried by all divergent sequences in $G$.  Pseudo--projective limits record kernels and images, while projective duality exchanges the point and line sides of the dynamics.  Our first structural result shows that, in the regular full-flag regime, the various boundary constructions are shadows of one and the same flag limit set.

\begin{maintheoremtwo}
Let $G\subset\PSL(3,\C)$ be a discrete $\theta$-divergent subgroup, let $\Lambda_\theta(G)\subset\Fl(\Ptwo)$ be its full-flag limit set, and let $\widehat\Lambda_{\CoG}(G)\subset\Pdual$ denote the extended Conze--Guivarc'h limit set of $G$ in the sense of~\cite{BGN18}. Then every element of the pseudo--projective boundary $\BSPG$ has rank one. With the canonical bidual identification, one has
\[
\widehat\Lambda_{\CoG}(\check G)=\pi_1(\Lambda_\theta(G))\subset\Ptwo,
\qquad
\widehat\Lambda_{\CoG}(G)=\pi_2(\Lambda_\theta(G))\subset\Pdual.
\]
Moreover, writing
\[
\Thinc(\Lambda_\theta(G)):=\bigcup_{(p,\ell)\in\Lambda_\theta(G)}\ell,
\]
one has the exact identities
\[
\Lambda_{\Myr}(G)=\bigcup_{[\ell]\in\widehat\Lambda_{\CoG}(G)}\ell
=\Thinc(\Lambda_\theta(G)),
\qquad
\Eq(G)=\Ptwo\setminus\Thinc(\Lambda_\theta(G)).
\]
The action of $G$ on this domain is properly discontinuous. Thus the classical normal-family obstruction is exactly the incidence Schubert thickening of the full-flag limit set in the point flag variety $\Ptwo$.

If, in addition, $\Lambda_{\Kul}(G)$ contains at least three projective lines in general position, then
\[
\Lambda_{\Kul}(G)=\Lambda_{\Myr}(G)=
\bigcup_{[\ell]\in\widehat\Lambda_{\CoG}(G)}\ell
=\Thinc(\Lambda_\theta(G)),
\]
\[
\Omega_{\Kul}(G)=\Eq(G)=\Ptwo\setminus\Thinc(\Lambda_\theta(G)).
\]
\end{maintheoremtwo}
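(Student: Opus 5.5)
The plan is to derive everything from Theorem~1 applied to divergent sequences in a $\theta$-divergent group, and to read off the rest from singular value decompositions. Write $g_n = k_n a_n k_n'$ with $k_n,k_n'\in \mathrm{PSU}(3)$ and $a_n=\mathrm{diag}(\sigma_1^{(n)},\sigma_2^{(n)},\sigma_3^{(n)})$. Then $\theta$-divergence means that both $\sigma_1/\sigma_2$ and $\sigma_2/\sigma_3$ tend to infinity along every divergent sequence. I will show that this excludes alternatives (2) and (3) of Theorem~1. In the point-to-line case the pseudo-projective limit has image a line, so it has rank two, which forces $\sigma_2/\sigma_1$ to stay bounded away from $0$. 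The line-to-point case is symmetric under $g\mapsto g^{-1}$. Hence every limit in $\BSPG$ is the rank-one map $[x]\mapsto p^-$ with kernel $\ell^+$. This gives the rank-one assertion and shows that every divergent sequence has a subsequence in the full-flag case, with flags $(p^-,\ell^-)$ coming from $\lim k_n$ and $(p^+,\ell^+)$ coming from $\lim k_n'^{-1}$. I take $\Lambda_\theta(G)$ to be the set of attracting flags $(p^-,\ell^-)$ arising this way, which agrees with the Benoist/KLP definition in the regular case.

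For the projections, recall that in \cite{BGN18} the extended Conze--Guivarc'h set in $\Pdual$ is the set of limits of the attracting points of the dual action $\check g_n = (g_n^{-1})^{T}$, with closure taken over divergent sequences. Under the dual action, $\check g_n$ collapses the complement of a dual line to the point $[\ell^-]\in\Pdual$, because the transpose-inverse exchanges the roles of $\sigma_1$ and $\sigma_3$ and swaps $k_n$ with the conjugate of $k_n'$. Therefore $\widehat\Lambda_{\CoG}(G)=\{\ell^-\}=\pi_2(\Lambda_\theta(G))$. Applying the same argument to $\check G$ and using the bidual identification gives $\pi_1(\Lambda_\theta(G))=\widehat\Lambda_{\CoG}(\check G)$. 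The only care needed here is to check that the closures agree, which holds because the set of limit flags of divergent sequences is closed by a diagonal argument.

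Next I treat the Myrberg set and equicontinuity, using the cited description of $\Lambda_{\Myr}$ as the union of kernels of elements of $\BSPG$ \cite{BCN11,CCL26}. By the first step these kernels are exactly the lines $\ell^+$, that is, the repelling lines of divergent sequences. Applying the flag statement to $g_n^{-1}$ identifies the set of all $\ell^+$ with the set of all $\ell^-$, since inverses of divergent sequences are divergent. This yields $\Lambda_{\Myr}(G)=\bigcup_{[\ell]\in\widehat\Lambda_{\CoG}(G)}\ell=\Thinc(\Lambda_\theta(G))$, and $\Eq(G)$ is its complement. For proper discontinuity, take a compact set $K\subset \Eq(G)$ and suppose $g_nK\cap K\neq\emptyset$ for infinitely many distinct $g_n$. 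Pass to a full-flag subsequence; since $K$ misses $\ell^+$, we get $g_nK\to p^-$, so $p^-\in K$. But $p^-\in\ell^-\subset\Thinc$, which is a contradiction.

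Finally, under the three-line hypothesis, the inclusion $\Lambda_{\Kul}\subset\Lambda_{\Myr}$ follows from proper discontinuity on $\Eq$, since $\Omega_{\Kul}$ is the largest open set on which the Kulkarni construction makes the action discontinuous. For the reverse inclusion I would invoke the known theorem from \cite{BCN16} and \cite{BCN11}: when $\Lambda_{\Kul}$ contains three lines in general position, $\Omega_{\Kul}=\Eq$, and $\Lambda_{\Kul}$ is the union of the lines $\ell^-$ obtained from the $\Lambda_1$ and $\Lambda_2$ accumulation sets. I expect this step to be the main obstacle. One has to show directly that each line $\ell^-$ lies in the Kulkarni set. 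I would try to prove this as follows. Points of $\ell^+\setminus\{p^+\}$ have destination set $\ell^-$ by Theorem~1(1), so $\ell^-\subset L_2(G)$ whenever $\ell^+\not\subset L_0\cup L_1$. Since $L_0\cup L_1\subset\Lambda_{\Kul}$, the general-position hypothesis should rule out degenerate cases where $\ell^+$ is entirely contained in $L_0\cup L_1$.
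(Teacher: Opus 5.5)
Through the proper-discontinuity step your argument is the paper's. You get rank one because both Cartan gaps diverge. You identify $\widehat\Lambda_{\CoG}(G)$ with the kernel lines: the paper routes this through $A(G)$, $K(G)$ and Lemma~3.1, while you compute the dual action directly, which is the same transpose computation. You obtain $\Lambda_{\Myr}(G)$ from the cited kernel description, and you use the same source--target argument. One small correction there: $(g_n^{-1})^{T}=\bar k_n a_n^{-1}\bar k_n'$, so no factors are swapped. Its attracting point $[\bar k e_3]$ is still the covector of $\mathbb P(k\langle e_1,e_2\rangle)=\ell^-$, so your conclusion stands.

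For the three-line part, the paper only cites \cite[Theorem~2.6]{BCN11}. That result gives $\Omega_{\Kul}(G)=\Eq(G)$ and hence both inclusions at once, so your appeal to it already finishes the proof. The two supplementary arguments in your last paragraph, however, do not work as written.

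First, $\Omega_{\Kul}$ is not in general the largest open set on which the action is properly discontinuous. Take $g=\operatorname{diag}(\lambda_1,\lambda_2,\lambda_3)$ with $|\lambda_1|>|\lambda_2|>|\lambda_3|$. The $\theta$-divergent group $\langle g\rangle$ has $\Omega_{\Kul}=\Ptwo\setminus(\mathbb P\langle e_1,e_2\rangle\cup\mathbb P\langle e_2,e_3\rangle)$, yet it acts properly discontinuously on the strictly larger set $\Ptwo\setminus(\mathbb P\langle e_1,e_2\rangle\cup\{[e_3]\})$. So proper discontinuity on $\Eq(G)$ alone does not give $\Eq(G)\subset\Omega_{\Kul}(G)$. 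You need either the citation or a direct check that $L_0,L_1,L_2\subset\Lambda_{\Myr}(G)$. The direct check is doable here for four reasons: off $\ell^+$ the $g_n$ collapse to $p^-$; destinations of points of $\ell^+\setminus\{p^+\}$ lie in $\ell^-$; $\Lambda_{\Myr}(G)$ is closed and invariant; and $p^+\in L_0(G)\cup L_1(G)$ because $g_n^{-1}z\to p^+$ for $z\notin\ell^-\cup L_0(G)$.

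Second, your direct proof of $\ell^-\subset\Lambda_{\Kul}(G)$ leaves open the case $\ell^+\subset L_0(G)\cup L_1(G)$. General position has no evident bearing on that case, so the hypothesis has to enter differently. Three lines in general position have no common point, so one of them, $L\subset\Lambda_{\Kul}(G)$, misses $p^+$. The dual maps $(g_n^{-1})^{T}$ converge to the constant $[\ell^-]$ off the pencil of lines through $p^+$, so $g_nL\to\ell^-$ in $\Pdual$. Since $\Lambda_{\Kul}(G)$ is closed and $G$-invariant, this gives $\ell^-\subset\Lambda_{\Kul}(G)$.
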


The theorem separates what is already known in the projective theory from the comparison made here.  The description of the Myrberg set by kernels of pseudo--projective limits and the dual extended Conze--Guivarc'h construction are available from the earlier work cited above.  The $\theta$-divergent hypothesis makes every boundary map rank one and identifies the two flag projections with the attracting and repelling projective boundary sets.  Thus the Myrberg locus itself is the union of the limiting lines, with no general-position assumption.  The three-line hypothesis enters only when this same set is identified with the Kulkarni limit set.

Once the limit set has been reduced to a configuration of projective lines, its incidence geometry becomes relevant to the geometry of the complement.  Here a feature peculiar to dimension two appears.  Green's theorem gives Kobayashi hyperbolicity for the complement of five lines in general position, but completeness at a boundary point requires a supporting configuration through that point.  This leads to a local five-line criterion.  Combined with the known classification of the four-line case~\cite{BCNfour11} and the line-counting theorem~\cite{BCN16}, it yields the following global statement.

\begin{maintheoremthree}
Let $G\subset\PSL(3,\C)$ be a discrete subgroup. If $\Lambda_{\Kul}(G)$ contains at least four projective lines in general position, then every connected component of $\Omega_{\Kul}(G)$ is complete Kobayashi hyperbolic and Kobayashi hyperbolically embedded in $\Ptwo$. Moreover, every such component is taut, pseudoconvex, Stein, and a domain of holomorphy.
\end{maintheoremthree}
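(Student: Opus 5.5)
The plan is to split according to the line-counting theorem of~\cite{BCN16}. If $\Lambda_{\Kul}(G)$ contains at least four lines in general position, then either $\Lambda_{\Kul}(G)$ contains exactly four lines in general position, or it contains infinitely many lines and, by~\cite{BCN16}, is a union of lines.
\begin{itemize}
\item In the first case, the four-line classification of~\cite{BCNfour11} gives $\Lambda_{\Kul}(G)$ explicitly. The components of $\Omega_{\Kul}(G)$ are then explicit domains: products of discs or half-planes after a projective change of coordinates, or complements in $\Ptwo$ of four lines in general position lifted to a suitable cover. For these we verify completeness and hyperbolic embeddedness by hand. For example, a component biholomorphic to $\mathbb H\times\mathbb H$ or to a bounded convex domain is complete hyperbolic, and boundedness in an affine chart gives hyperbolic embeddedness.
\item In the second case, every component $U$ of $\Omega_{\Kul}(G)$ lies in the complement of a closed family $\mathcal L$ of lines. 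This family is infinite, contains four lines in general position, and, being $G$-invariant and closed with infinitely many members, has lines accumulating. The work is then a local five-line criterion.
\end{itemize}

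The criterion I would prove is the following. Let $D\subset\Ptwo$ be a domain avoiding a closed family $\mathcal L$ of lines that contains five lines in general position. Suppose every point $x\in\partial D$ has a neighbourhood $V$ and lines $L_1,\dots,L_5\in\mathcal L$ in general position with $D\cap V\subset V\setminus\bigcup L_i$ and $x\in\bigcup L_i$. Then $D$ is complete hyperbolic and hyperbolically embedded in $\Ptwo$.

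The proof uses Green's theorem: $X=\Ptwo\setminus(L_1\cup\dots\cup L_5)$ is complete hyperbolic and hyperbolically embedded. Hyperbolic embeddedness of $D$ then follows by a localisation principle of Kiernan/Kobayashi type. We have $k_D\ge k_X$ globally, and the condition ``$x_n\to x$, $y_n\to y$, $x\ne y$ implies $\liminf k_D(x_n,y_n)>0$'' reduces to the corresponding statement for $X$, provided we can choose the five lines uniformly near $x$. Completeness will be obtained via Kiernan's criterion: a hyperbolically embedded domain whose boundary is locally complete hyperbolic near each point, in the sense that there is a complete hyperbolic $V\setminus\bigcup L_i$ containing $D\cap V$ with $x$ on its boundary divisor, is complete hyperbolic. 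This works because $k_D$-Cauchy sequences cannot escape to $\partial D$ without escaping to the boundary of the local complete model.

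To apply the criterion in the infinite case, I need five lines in general position in $\Lambda_{\Kul}(G)$ through or around each boundary point. Since $\partial U\subset\Lambda_{\Kul}(G)$ is a union of lines, each $x\in\partial U$ lies on some line $\ell\in\mathcal L$. The step I expect to be the main obstacle is finding, for each such $\ell$, four further lines of $\mathcal L$ in general position together with $\ell$. My plan uses two facts: $\mathcal L$ contains four lines in general position and has infinitely many members, and, by~\cite{BCN16}, the dual set $\{[\ell]\}\subset\Pdual$ is not contained in a line or in a pencil union a point. A dimension count then avoids the finitely many bad incidences: lines through intersection points of the others, or concurrent triples. If $x$ lies on several lines of a degenerate configuration, I would use the minimality of the dual limit set to move lines slightly within $\mathcal L$.

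Finally, the remaining properties follow from standard facts. Complete hyperbolicity implies tautness. A taut domain in $\Ptwo$ that omits a line lies in $\C^2$, so it is pseudoconvex. By Oka's solution of the Levi problem, pseudoconvexity gives that it is Stein and a domain of holomorphy.
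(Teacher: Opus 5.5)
Your overall architecture is the paper's. You split into the four-line case and the case of more lines, use \cite{BCN16} to write $\Lambda_{\Kul}(G)$ as a union of lines, apply Green's theorem to a five-line configuration through each boundary point, and get tautness, pseudoconvexity and Steinness from a limit line. However, the step you yourself single out as the main obstacle is not closed by what you propose.

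The facts you list are: four lines in general position, infinitely many lines, a closed invariant family with accumulating lines, and a dual set not contained in a line or in a line plus a point. All of these are satisfied by the limit set in the $\nu(G)=4$ (hyperbolic toral) case. That set is a union of two real one-parameter pencils of lines through two points, and any five of its lines include three from one pencil, which are concurrent. So it contains no five lines in general position, and no dimension count based on those facts can produce them. For the same reason your split ``exactly four lines versus infinitely many lines'' is not a dichotomy.

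The correct split is $\nu(G)=4$ versus $\nu(G)>4$. In the second case \cite[Theorem~1.2]{BCN16} gives $\nu(G)=\infty$, which is the input you actually need and never invoke. With it the completion is a pigeonhole argument; no minimality or perturbation of lines is required:
\begin{enumerate}
\item Take nine lines of $\mathcal L$ in general position, discard $\ell$ if it occurs, and keep eight.
\item Since no three of them are concurrent, at most two pass through any given point of $\ell$.
\item Keeping one line from each such class leaves at least four lines, no two meeting on $\ell$. Together with $\ell$ these are five lines in general position.
\end{enumerate}

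Once you have these five lines, your Kiernan-type localization and the worry about choosing them uniformly near $x$ are unnecessary. The domain $D$ misses every line of $\mathcal L$, so $D\subset X_x:=\Ptwo\setminus(L_1\cup\dots\cup L_5)$ globally and $k_D\ge k_{X_x}$. A $k_D$-Cauchy sequence tending to $x\in\bigcup L_i$ would then converge inside the complete space $X_x$, which is absurd. Hyperbolic embeddedness follows by monotonicity from any single Green configuration.

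The $\nu(G)=4$ case is also not settled as written. By \cite{BCNfour11}, the components are, up to a projective change of coordinates, copies of $\mathbb H\times\mathbb H\subset\C^2\subset\Ptwo$. The other alternatives you list do not occur. Covers of complements of four lines could not even be hyperbolic, since such a complement contains a diagonal line minus two points, a copy of $\C^*$.

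Moreover, no projective change of coordinates makes a component bounded in an affine chart. The closure of $\mathbb H\times\mathbb H$ in $\Ptwo$ contains the entire line at infinity, hence meets every projective line. So ``boundedness in an affine chart'' gives nothing here, and hyperbolic embeddedness must be verified on the actual model. The paper does this directly:
\begin{itemize}
\item If $k_{\mathbb H\times\mathbb H}(z_n,w_n)\to0$, the Poincar\'e distance on each factor forces $\operatorname{Im}w_{j,n}/\operatorname{Im}z_{j,n}\to1$ and $|z_{j,n}-w_{j,n}|=o(\operatorname{Im}z_{j,n})$.
\item Hence $\|Z_n-W_n\|=o(\|Z_n\|)$ for $Z_n=(z_{1,n},z_{2,n},1)$ and $W_n=(w_{1,n},w_{2,n},1)$.
\item So the two sequences cannot converge to distinct points of $\Ptwo$.
\end{itemize}
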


There are two genuinely different mechanisms behind this theorem.  When $\nu(G)=4$, one is in the hyperbolic toral case of~\cite{BCNfour11}, and the ordinary components are projectively equivalent to copies of $\mathbb H\times\mathbb H$.  When $\nu(G)>4$, the line-counting theorem forces $\nu(G)=\infty$; every boundary point lies on a limit line, and a prescribed such line can be completed to five lines in general position.  Green's theorem can then be applied locally.  We isolate this five-line criterion in Section~4 because it is the part of the argument that survives independently of the group-theoretic classification.

This geometric conclusion has a natural analogue on the holomorphic side of Sullivan's dictionary.  In the work of Forn\ae ss and Sibony, sufficiently rich incidence properties of postcritical sets lead to strong Kobayashi geometry for their complements~\cite{FS94,FS95}.  Here the postcritical configuration is replaced by the family of projective lines produced by the asymptotic dynamics.  It is therefore natural to ask whether this general-position regime is exceptional or whether it occurs abundantly among projective Kleinian groups.

The answer is provided by Schottky dynamics.  One may choose a projective Schottky configuration whose attracting and repelling flags are in general position and whose Cartan directions stay in the interior of the Weyl chamber.  For sufficiently large translation lengths the resulting free group is $\theta$-divergent.  The ping--pong and regularity conditions are strict and hence persist under small perturbations.  This produces a nonempty open set in the usual complex topology of the marked representation variety.  Since $\Hom(F_r,\PSL(3,\C))$ is irreducible, every such open set is Zariski dense.

For later use, let $\mathcal K_r$ denote the locus of representations $\rho\in\Hom(F_r,\PSL(3,\C))$ whose image is a non-virtually-cyclic complex Kleinian group.

\begin{maintheoremfour}
For every $r\ge2$ there exists a nonempty subset
\[
\mathcal U_r\subset\Hom(F_r,\PSL(3,\C)),
\]
open in the usual complex topology, such that, for every $\rho\in\mathcal U_r$, the representation $\rho$ is injective and $\Gamma_\rho=\rho(F_r)$ is a discrete, strongly irreducible, $\theta$-divergent Schottky-type complex Kleinian group satisfying
\[
\nu(\Gamma_\rho)=\infty.
\]
Moreover, every point of $\Lambda_{\Kul}(\Gamma_\rho)$ lies on a limit line that can be completed, inside $\Lambda_{\Kul}(\Gamma_\rho)$, to five lines in general position. Consequently every connected component of $\Omega_{\Kul}(\Gamma_\rho)$ is complete Kobayashi hyperbolic and Kobayashi hyperbolically embedded in $\Ptwo$, and hence is taut, pseudoconvex, Stein, and a domain of holomorphy.

The set $\mathcal U_r$ is Zariski dense in $\Hom(F_r,\PSL(3,\C))$. In particular, the loci in $\mathcal K_r$ consisting of strongly irreducible representations, of representations with $\nu=\infty$, and of representations with the preceding Kobayashi geometry are all Zariski dense in $\mathcal K_r$.
\end{maintheoremfour}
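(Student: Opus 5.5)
The construction is an explicit projective Schottky group, followed by openness, then the limit-set description of Theorem~2, then the five-line criterion, and finally Zariski density.

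\emph{Construction.} Fix $r\ge2$ and choose diagonalizable loxodromic elements $\gamma_1,\dots,\gamma_r\in\PSL(3,\C)$. Each $\gamma_i$ should have eigenvalue moduli $|\lambda_1|>|\lambda_2|>|\lambda_3|$ with both gaps $|\lambda_1/\lambda_2|$ and $|\lambda_2/\lambda_3|$ large. Then $\gamma_i$ has an attracting full flag $(p_i^+,\ell_i^+)$ and a repelling flag $(p_i^-,\ell_i^-)$, and we choose the $2r$ flags pairwise in general position. As a concrete model, conjugate $\mathrm{diag}(\lambda^2,\lambda,1)$ by generic matrices. Around each attracting and repelling flag, take small neighbourhoods $U_i^\pm$ in the full flag variety $\Fl(\Ptwo)$ (equivalently, neighbourhoods of the points and of the dual lines). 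For a large power $N$, each $\gamma_i^{\pm N}$ maps the complement of $U_i^\mp$ into $U_i^\pm$. The ping--pong lemma on $\Fl(\Ptwo)$ then gives that $\rho$ is injective and $\Gamma_\rho$ is discrete.

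\emph{$\theta$-divergence.} Show that for reduced words $w$ the Cartan projection $\mu(\rho(w))$ grows linearly in word length in both simple roots. This follows from a standard cone-contraction estimate: each letter contracts a cone field around its attracting flag by a uniform factor, in both $\Ptwo$ and $\Pdual$. This is the usual argument that such Schottky groups are Anosov with respect to the Borel subgroup.

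\emph{Openness.} All ping--pong and cone conditions are strict inclusions of compact sets into open sets. They are therefore preserved under small perturbations of the generators, which gives an open set $\mathcal U_r$.

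\emph{Strong irreducibility.} Strong irreducibility holds because the attracting points $p_i^+$ are generic and so span $\C^3$, and the same holds for every finite-index subgroup, since such a subgroup contains powers of all the $\gamma_i$ and of their conjugates. If needed, shrink $\mathcal U_r$ so that the attracting points of $\gamma_1,\gamma_2,\gamma_1\gamma_2$ are in general position. This excludes invariant finite unions of points and lines, and the condition is open.

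\emph{$\nu=\infty$ and the five-line property.} By Theorem~2, $\Eq(\Gamma_\rho)$ is the complement of $\bigcup\ell$ over $(p,\ell)\in\Lambda_\theta$. Since $\Gamma_\rho$ is non-elementary free, $\pi_2(\Lambda_\theta)\subset\Pdual$ is a Cantor set, so in particular it is infinite. We need three limit lines in general position in order to identify this union with $\Lambda_{\Kul}$. We get them from $\ell_1^+,\ell_2^+,\ell_3^+$ (using $\gamma_1\gamma_2$ as the third generator if $r=2$), which are chosen generic; hence $\nu=\infty$.

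For the five-line statement, let $x\in\Lambda_{\Kul}$. Then $x$ lies on some limit line $\ell$, and we need four more limit lines completing $\ell$ to five in general position. The set of lines in $\pi_2(\Lambda_\theta)$ is perfect and $\Gamma_\rho$-invariant, and it is not contained in a pencil or in a conic's dual, by strong irreducibility and Zariski density of the limit set in $\Pdual$. The bad configurations (three concurrent lines) form a proper Zariski-closed condition. So we choose successively lines avoiding finitely many proper closed subsets of $\Pdual$. Each such subset meets the limit set in a nowhere-dense set, because the limit set is not contained in a proper algebraic subset. Then the local five-line criterion of Section~4 gives complete hyperbolicity and hyperbolic embedding, and the consequences (taut, pseudoconvex, Stein, domain of holomorphy) are the standard ones already invoked in Theorem~3.

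\emph{Zariski density.} The variety $\Hom(F_r,\PSL(3,\C))\cong\PSL(3,\C)^r$ is irreducible, so the nonempty Euclidean-open set $\mathcal U_r$ is Zariski dense. It lies in $\mathcal K_r$, so the subloci are Zariski dense in $\mathcal K_r$ as well.

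The main obstacle is uniform $\theta$-divergence under perturbation. The ping--pong must control both root gaps simultaneously, which requires working on the full flag variety rather than on $\Ptwo$ alone. My first attempt would be the cone-field criterion applied in both $\Ptwo$ and $\Pdual$.
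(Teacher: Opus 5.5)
Your overall plan (Schottky construction, openness, Theorem~2, a general-position selection, Theorem~\ref{thm:local-five-line}, irreducibility of $\PSL(3,\C)^r$) is viable. However, the step that produces lines inside $\Lambda_{\Kul}(\Gamma_\rho)$ and gives $\nu=\infty$ has two genuine gaps.

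\emph{The three lines are not shown to lie in $\Lambda_{\Kul}(\Gamma_\rho)$.} You need $\ell_1^+,\ell_2^+,\ell_3^+\subset\Lambda_{\Kul}(\Gamma_\rho)$, since this is the hypothesis of the Kulkarni half of Theorem~2. Theorem~2 only puts them in $\Lambda_{\Myr}=\Thinc(\Lambda_\theta)$, so using it here would be circular. Nor is it automatic: $L_2(\Gamma_\rho)$ is built only from compact sets avoiding $L_0(\Gamma_\rho)\cup L_1(\Gamma_\rho)$, which is larger than for $\langle h_i\rangle$, so $\Lambda_{\Kul}(\langle h_i\rangle)\subset\Lambda_{\Kul}(\Gamma_\rho)$ needs proof. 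The paper gets it from \cite[Proposition~4.2.28]{CNS13} together with the non-elementary alternative \cite[Theorem~6.3.6(b)]{CNS13}. A direct fix is available for strongly loxodromic $h$. If $\ell_h^-\not\subset L_0\cup L_1$, the destination computation in the proof of Theorem~1 shows that the $h^n$-images of a compact neighbourhood of a non-fixed point of $\ell_h^-\setminus(L_0\cup L_1)$ accumulate on all of $\ell_h^+$. Hence at least one of $\ell_h^\pm$ lies in $\Lambda_{\Kul}$, and it suffices to require every sign choice for three elements to be in general position.

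\emph{The passage to $\nu=\infty$ does not follow as argued.} ``$\pi_2(\Lambda_\theta)$ is a Cantor set, hence $\nu=\infty$'' is a non sequitur: infinitely many lines lying in two pencils have $\nu\le4$, as in the hyperbolic toral case. Your justification of the successive choice is also false, because strong irreducibility does not make $\pi_2(\Lambda_\theta)$ Zariski dense in $\Pdual$. Schottky subgroups of a conjugate of $\mathrm{PSO}(3,\C)$ are strongly irreducible and $\theta$-divergent, and all their limit lines are tangent to an invariant conic, so $\pi_2(\Lambda_\theta)$ lies in a dual conic. You give no argument that $\mathcal U_r$ avoids them, nor that $\Gamma_\rho$ is Zariski dense.

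What you actually need is only that $K:=\pi_2(\Lambda_\theta)$ is not contained in finitely many pencils (lines of $\Pdual$). This does follow from strong irreducibility: otherwise the finitely many pencils meeting $K$ in infinitely many points would form a finite $\Gamma_\rho$-invariant set of points of $\Ptwo$. The bad set at each stage is a finite union of pencils, so the greedy choice then gives $\nu=\infty$ and the prescribed-line completion at once. The conic case is harmless, since a pencil meets a dual conic in at most two points.

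Once repaired, this is a genuinely different route from the paper's. The paper instead gets $\nu\ge4$ from four lines, excludes $\nu=4$ by strong irreducibility and \cite{BCNfour11}, uses the line count of \cite{BCN16}, and then applies Lemma~\ref{lem:prescribed-line-combinatorial}.

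Two smaller omissions. First, your strong-irreducibility argument uses the wrong condition. Attracting points spanning $\C^3$, or in general position for $\gamma_1,\gamma_2,\gamma_1\gamma_2$, does not prevent $h_1,h_2$ from sharing their middle eigenpoint or an invariant line, and then $\langle h_1,h_2\rangle$ is reducible. Impose instead the open condition that $h_1,h_2$ have no common eigenpoint and no common invariant line. Then use that a finite orbit is fixed by positive powers $h_i^k$, which have the same eigendirections because the eigenvalue moduli are distinct.

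Second, you never show $\Omega_{\Kul}(\Gamma_\rho)\neq\varnothing$. This is part of being a complex Kleinian group and is needed for $\mathcal U_r\subset\mathcal K_r$. Theorem~2 identifies $\Eq(\Gamma_\rho)$ with $\Ptwo\setminus\Thinc(\Lambda_\theta)$ but does not show this is nonempty; the paper gets it from a ping--pong base point near which the reduced words form a normal family.

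Your cone-field route to $\theta$-divergence is a legitimate substitute for the paper's appeal to the Kapovich--Leeb--Porti Morse--Schottky construction and the openness of the Morse property.
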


The theorem gives the marked form of the genericity statement.  There is also a natural unmarked version.  We put on finitely generated image groups the final topology induced by the forgetful maps $\rho\mapsto\rho(F_r)$ and call it the Zariski--Chabauty topology.  For a fixed bound on the number of generators, the groups with $\nu=\infty$ are dense in this topology, and the approximating groups may be chosen from the same Schottky locus.  In particular, their ordinary components have the geometry described above.

It is important that this last statement is topological in the precise sense just specified.  Zariski density in the marked space, and the corresponding final topology after forgetting the marking, are much weaker than density for the usual analytic notions of convergence of discrete groups.  A Zassenhaus-neighborhood argument gives an ordinary neighborhood of a rank-two abelian complex Kleinian group which does not meet the Schottky locus.  Thus the marked statement does not extend to algebraic convergence; after forgetting the marking, the same example also obstructs strong convergence.  We make no analogous claim here for geometric convergence alone.

Taken together, these results give a coherent picture.  Divergent sequences produce asymptotic flag data; this data organizes the different notions of limit set; the resulting projective-line configuration determines the ordinary region and, in general position, its complete intrinsic hyperbolic geometry; and the configurations for which this picture holds occur on a Zariski-dense marked locus and densely in the corresponding unmarked Zariski--Chabauty space.

From this point of view, the analogy with the classical theory is not merely formal.  For classical Kleinian groups, one limit set is at once an asymptotic boundary, the locus on which the essential dynamics accumulates, and the complement of the domain of discontinuity.  In complex projective dimension two these roles are distributed among several objects.  The pseudo--projective boundary and its flag data place them in a common asymptotic geometry: the Kulkarni limit set and the ordinary set retain the familiar boundary/discontinuity-space dichotomy, while the Conze--Guivarc'h and Cartan loci record complementary attracting and flag data.  In this sense, the results above provide a projective counterpart to part of Sullivan's dictionary.

\section{Projective dynamics and preliminaries}
We fix notation and recall the projective-dynamical objects used throughout the paper, with emphasis on the structures that encode degenerations of projective transformations.

\subsection{Projective space}
Let
\[
\Ptwo=(\C^3\setminus\{0\})/\C^*
\]
be the complex projective plane. Its group of holomorphic automorphisms is
\[
\operatorname{Aut}(\Ptwo)\simeq \PSL(3,\C).
\]
A projective line $\ell\subset\Ptwo$ is the projectivization of a $2$-dimensional linear subspace of $\C^3$. The space of projective lines is naturally identified with the dual projective plane $\Pdual$.

We recall the classification of elements of $\PSL(3,\C)$; see~\cite{CNS13}. It will be used to describe asymptotic behavior and, in particular, the role of strongly loxodromic elements.

Let $g\in\PSL(3,\C)$, and choose a lift $\widetilde g\in\mathrm{SL}(3,\C)$ such that $[\widetilde g]=g$. One says that $g$ is elliptic if $\widetilde g$ is diagonalizable and all its eigenvalues have modulus $1$, parabolic if $\widetilde g$ is not diagonalizable and all its eigenvalues have modulus $1$, and loxodromic otherwise. Equivalent dynamical and algebraic descriptions may be found in~\cite{CNS13}.

Up to conjugation, loxodromic elements fall into four classes described in~\cite{CNS13}: complex homotheties, screws, loxoparabolic elements, and strongly loxodromic elements. The last class plays a distinguished role here: strongly loxodromic elements are precisely the biproximal elements and govern the generic asymptotic behavior considered below.

\begin{definition}
A matrix $\widetilde g\in\mathrm{GL}(3,\C)$ is proximal if it has an eigenvalue $t_0\in\C$ such that
\[
|t_0|>|t|
\]
for every other eigenvalue $t$ of $\widetilde g$, and $t_0$ has algebraic multiplicity one. An eigenvector corresponding to $t_0$ is called a dominant eigenvector. An element $g\in\PSL(3,\C)$ is proximal if some lift $\widetilde g\in\mathrm{SL}(3,\C)$ is proximal. A point $v\in\Ptwo$ is called dominant for $g$ if some lift of $v$ to $\C^3$ is a dominant eigenvector of such a lift.
\end{definition}

Proximal elements play a central role in the asymptotic description of the dynamics. A proximal element of $\PSL(3,\C)$ has a unique attracting fixed point in $\Ptwo$.

\begin{remark}
For $\PSL(3,\C)$, the classification in~\cite{CNS13} implies that every proximal element is loxodromic. Conversely, if $g\in\PSL(3,\C)$ is loxodromic, then either $g$ or $g^{-1}$ is proximal. In particular, strongly loxodromic elements are proximal in both forward and backward directions.
\end{remark}

\begin{remark}
By the classification of loxodromic elements in $\PSL(3,\C)$, the asymptotic dynamics of a loxodromic element $g$ fall into two cases.

In the first case, $g$ admits two distinguished invariant projective lines $\ell_g^+,\ell_g^-\subset\Ptwo$. The line $\ell_g^+$ is attracting for $g$ and repelling for $g^{-1}$, while $\ell_g^-$ is repelling for $g$ and attracting for $g^{-1}$. This occurs, for instance, for strongly loxodromic elements.

In the second case, $g$ admits a single distinguished invariant projective line $\ell_g$, together with a fixed point $p_g\notin\ell_g$. In this situation, one of $\ell_g$ or $p_g$ is attracting, and the other repelling, with the roles reversed for $g^{-1}$. Thus, there is only one dynamically distinguished line: if it is repelling (resp. attracting), we denote it by $\ell_g^-$ (resp. $\ell_g^+$).

These lines are naturally described on the dual projective plane. Via the identification
\[
\mathbb P(\wedge^2\C^3)\simeq(\Ptwo)^*,
\]
a projective line in $\Ptwo$ corresponds to a point of $\mathbb P(\wedge^2\C^3)$. When $\wedge^2g$ is proximal, its dominant fixed point represents the attracting line of $g$. When $\wedge^2g^{-1}$ is proximal, its dominant fixed point represents the repelling line of $g$. Accordingly, throughout the paper we use the notation $\ell_g^-$ only when the repelling object of $g$ is a projective line. This convention is compatible with the dual Conze--Guivarc'h viewpoint used below.
\end{remark}

\subsection{Pseudo--projective maps}
The space of pseudo--projective maps provides a natural compactification of $\PSL(3,\C)$ that records degenerations of projective transformations. It will be our main tool for describing asymptotic behavior.

Let $M_{3\times3}(\C)$ be the space of complex $3\times3$ matrices, and define
\[
\SP(3,\C):=(M_{3\times3}(\C)\setminus\{0\})/\C^*.
\]
This space is naturally identified with $\mathbb P^8_{\C}$ and contains $\PSL(3,\C)$ as an open dense subset.

Each element $S\in\SP(3,\C)$ induces a partially defined map
\[
S:\Ptwo\setminus\Ker(S)\longrightarrow\Ptwo,
\]
where $\Ker(S)$ is a projective subspace (possibly empty), and we define
\[
\Image(S):=S(\Ptwo\setminus\Ker(S)).
\]
We use the following compactness property for sequences of projective transformations.

\begin{proposition}
Let $(g_n)\subset\PSL(3,\C)$ be a sequence of pairwise distinct elements. Then, after passing to a subsequence, there exists $\rho\in\SP(3,\C)$ such that
\[
g_n\longrightarrow\rho
\]
in $\SP(3,\C)$, and $g_n$ converges uniformly on compact subsets of $\Ptwo\setminus\Ker(\rho)$ to the map induced by $\rho$.
\end{proposition}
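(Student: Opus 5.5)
The plan is to exploit the compactness of $\SP(3,\C)\cong\mathbb P^8_{\C}$, and then to check by hand that convergence of matrices up to scale gives locally uniform convergence of the induced maps away from the kernel.

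\emph{Step 1: extraction.} For each $n$ choose a lift $\widetilde g_n\in\mathrm{SL}(3,\C)$ and normalize it as
\[
A_n:=\widetilde g_n/\|\widetilde g_n\|,
\]
where $\|\cdot\|$ is any fixed norm on $M_{3\times3}(\C)$. The matrices $A_n$ lie in the unit sphere, which is compact, so after passing to a subsequence $A_n\to A$ with $\|A\|=1$; in particular $A\neq0$. Set $\rho:=[A]\in\SP(3,\C)$. Since the quotient map from the unit sphere to $\SP(3,\C)$ is continuous, $g_n=[A_n]\to\rho$ in $\SP(3,\C)$. The hypothesis that the $g_n$ are pairwise distinct is not needed for existence. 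It only matters in the case of interest, where $\rho\notin\PSL(3,\C)$: this happens when $\|\widetilde g_n\|\to\infty$, since then $\det A_n=\|\widetilde g_n\|^{-3}\to0$. Here $\Ker(\rho)=\mathbb P(\ker A)$.

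\emph{Step 2: locally uniform convergence.} Fix a compact set $K\subset\Ptwo\setminus\Ker(\rho)$. Let $\widehat K\subset S^5\subset\C^3\setminus\{0\}$ be the set of unit lifts of points of $K$; it is compact and projects onto $K$. The function $v\mapsto|Av|$ is continuous and nonvanishing on $\widehat K$, so it is bounded below by some $c>0$ there. Because $\|A_n-A\|\to0$, for $n$ large we get, uniformly on $\widehat K$,
\[
|A_nv-Av|\le\|A_n-A\|<c/2,
\]
and hence $|A_nv|\ge c/2$. The map
\[
\Phi:\{w\in\C^3:|w|\ge c/2\}\to\Ptwo,\qquad w\mapsto[w],
\]
is uniformly continuous on bounded parts of its domain. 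It follows that $[A_nv]\to[Av]$ uniformly for $v\in\widehat K$. This is exactly the statement that $g_n\to\rho$ uniformly on $K$, with respect to the Fubini--Study metric.

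The only point needing care is the uniform lower bound $|A_nv|\ge c/2$ on $\widehat K$, which keeps the projectivization from degenerating. It follows directly from compactness of $\widehat K$ together with the definition of $\Ker(\rho)$, so I do not expect a serious obstacle.
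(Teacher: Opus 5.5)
Your argument is correct and complete. The paper states this proposition without proof, but the normalization $\|c_mA_m\|=1$ with extraction of a nonzero limit, used in its Furstenberg compactness lemma, is exactly your Step~1, and your Step~2 writes out the uniform lower bound on $|A_nv|$ over unit lifts of $K$ that the paper leaves implicit. The only nitpick is that $|A_nv-Av|\le\|A_n-A\|$ needs $\|\cdot\|$ to dominate the operator norm; otherwise a constant from norm equivalence enters, which is harmless.
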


\subsection{Limit sets}
Let $G\subset\PSL(3,\C)$ be a discrete subgroup. We recall the notions of limit set that will later be compared through the asymptotic boundary. Throughout the paper, a complex Kleinian group means a discrete subgroup $G\subset\PSL(3,\C)$ with nonempty Kulkarni discontinuity region $\Omega_{\Kul}(G)$.
\begin{itemize}
\item The Kulkarni limit set $\Lambda_{\Kul}(G)$ is defined as in~\cite{Kul78}. Its complement
\[
\Omega_{\Kul}(G):=\Ptwo\setminus\Lambda_{\Kul}(G)
\]
is the Kulkarni discontinuity region; the action of $G$ on $\Omega_{\Kul}(G)$ is properly discontinuous.
\item The Myrberg limit set $\Lambda_{\Myr}(G)$ is defined by
\[
\Lambda_{\Myr}(G):=\Ptwo\setminus\Eq(G),
\]
where $\Eq(G)$ is the equicontinuity region; for the modern projective formulation used here see~\cite{BCN11}, while the terminology is historically associated with~\cite{Myrberg25}.
\item In the standard strongly irreducible setting, the Conze--Guivarc'h limit set $\Lambda_{\CoG}(G)$ is the closure of the attracting fixed points of proximal elements, whenever such elements exist; see~\cite{CG00}.
\item Following~\cite{BGN18}, the extended Conze--Guivarc'h limit set $\widehat\Lambda_{\CoG}(G)$ is defined in the dual projective plane $\Pdual$ from convergence of the induced dual action on a nonempty open set. This definition makes sense without assuming strong irreducibility or the existence of proximal elements. Under the standard Conze--Guivarc'h hypotheses of strong irreducibility and proximality, the inclusions comparing the classical and extended sets in Lemma~3.1 become equalities after the canonical bidual identification; see the remark following that lemma. We keep this dual-space convention throughout.
\item The full-flag (or Cartan) limit set $\Lambda_\theta(G)$ records asymptotic flag data via the Cartan projection.
\end{itemize}

\subsection{Compactifications}
We next recall three compactifications that encode degenerations of projective transformations from complementary perspectives.

\medskip\noindent\emph{Pseudo--projective compactification.}
This is the space $\SP(3,\C)$ defined above, which records degenerations in terms of kernel and image data.

\medskip\noindent\emph{Pointwise (Furstenberg-type) compactification.}
We use the term Furstenberg map for the pointwise degeneration model below. The terminology emphasizes its flag-boundary character; throughout the paper, it refers specifically to the following three-type definition.

\begin{definition}
A map
\[
\rho:\Ptwo\longrightarrow\Ptwo
\]
is called a Furstenberg map if either $\rho\in\PSL(3,\C)$, or else one of the following holds:
\begin{enumerate}[label=(\arabic*)]
\item There exists an incident pair $p\in\ell$, where $p$ is a point and $\ell$ is a line in $\Ptwo$, and points $q_0,q_1,q_2\in\Ptwo$, not necessarily distinct, such that
\[
\rho(p)=q_0,\qquad \rho|_{\ell\setminus\{p\}}\equiv q_1,\qquad \rho|_{\Ptwo\setminus\ell}\equiv q_2.
\]
\item There exist a point $p\in\Ptwo$, a line $\ell\subset\Ptwo$, a pseudo--projective map
\[
P:\Ptwo\setminus\{p\}\longrightarrow\ell
\]
induced by a rank-$2$ linear map with kernel $p$, and a point $q\in\Ptwo$, such that
\[
\rho|_{\Ptwo\setminus\{p\}}=P,\qquad \rho(p)=q.
\]
\item There exist projective lines $\ell_1,\ell_2\subset\Ptwo$, a projective isomorphism
\[
f:\ell_1\longrightarrow\ell_2,
\]
and a point $q\in\Ptwo$, such that
\[
\rho|_{\ell_1}=f,\qquad \rho|_{\Ptwo\setminus\ell_1}\equiv q.
\]
\end{enumerate}
\end{definition}

The corresponding compactness statement is the following.

\begin{lemma}
Let $(g_m)\subset\PSL(3,\C)$ be a sequence of pairwise distinct elements. Then, after passing to a subsequence, there exists a Furstenberg map
\[
\rho:\Ptwo\longrightarrow\Ptwo
\]
such that $g_m\to\rho$ pointwise on $\Ptwo$.
\end{lemma}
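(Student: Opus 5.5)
We will prove the lemma with the Cartan ($KAK$) decomposition, followed by a diagonal extraction and a case analysis on the ratios of the singular values. For each $m$ we write
\[
\widetilde g_m=k_m\,\mathrm{diag}(a_m,b_m,c_m)\,k_m',\qquad k_m,k_m'\in \mathrm{SU}(3),\quad a_m\ge b_m\ge c_m>0,\quad a_mb_mc_m=1.
\]
Since $\mathrm{SU}(3)$ is compact, we may pass to a subsequence with $k_m\to k$ and $k_m'\to k'$. Since $[0,1]$ is compact, we may further assume $b_m/a_m\to\beta\in[0,1]$ and $c_m/b_m\to\gamma\in[0,1]$. Pairwise distinctness together with discreteness of the sequence (or, in general, after passing to a subsequence leaving every compact set) gives $a_m/c_m\to\infty$, so $\beta$ and $\gamma$ are not both positive. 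This is the one point where divergence is used. If the sequence instead has a convergent subsequence in $\PSL(3,\C)$, the limit is an element of $\PSL(3,\C)$ and it is a Furstenberg map by definition. Conjugating by the limits $k$ and $k'$ is harmless, because $k_m\to k$ uniformly. So it suffices to analyse the diagonal sequence $D_m=\mathrm{diag}(1,b_m/a_m,c_m/a_m)$ pointwise, while keeping track of the fact that $k_m'x$ moves with $m$.

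We split according to $(\beta,\gamma)$.
\begin{enumerate}[label=(\roman*)]
\item If $\beta=\gamma=0$, set $p=k'^{-1}[e_3]$ and $\ell=k'^{-1}[e_2,e_3]$. For $x\notin\ell$ the first coordinate of $k_m'x$ stays bounded away from zero, so $g_mx\to k[e_1]$; this gives $q_2$. For $x\in\ell\setminus\{p\}$ we also need to control the rate at which the first coordinate of $k_m'x$ vanishes compared with $b_m/a_m$. This is the main obstacle.
\item If $\beta>0$ and $\gamma=0$, the limit on $\Ptwo\setminus\{p\}$ is the rank-$2$ map $k\,\mathrm{diag}(1,\beta,0)\,k'$, which is a type (2) map.
\item If $\beta=0$ and $\gamma>0$, the limit is constant off the line $k'^{-1}[e_2,e_3]$, giving type (3).
\end{enumerate}

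The real difficulty is that the pointwise limit on the exceptional sets (the point $p$ and the line $\ell$) depends on how fast $k_m'x$ approaches $k'x$. To handle this we do not fix $k_m'$ in its limit. For each of the finitely many relevant strata we apply a further normalization: replace $k_m'$ by $k_m'$ composed with an element $h_m\to I$ that fixes the flag $(k'^{-1}[e_3],k'^{-1}[e_2,e_3])$ and is chosen to commute appropriately with $D_m$. Equivalently, we choose the Cartan decomposition so that $k_m'=k'$ eventually, absorbing the error into a bounded upper-triangular factor $u_m\to I$, using an Iwasawa-type factorization $D_mu_m=u_m'D_m$ with $u_m'$ bounded when the unipotent part is conjugated the right way. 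If such a normalization is unavailable, we instead extract one more subsequence for each individual point: on $\ell\setminus\{p\}$, the vector $D_mk_m'x$ lies in $\mathbb P$ of a compact set of directions, and we pass to a subsequence along which $g_m$ converges at one auxiliary point. Projective linearity of $D_m$ on $\ell$ then forces the restriction $g_m|_\ell$ to converge to a pseudo--projective map of the line $\ell$. That limit is either constant off a point (type (1), with $q_1$ and $q_0$ obtained at the point $p$ by a final extraction) or an isomorphism onto a line (type (3)).

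The same device of restricting to $p$ or to $\ell$ and applying the one-dimensional compactness of $\SP(2,\C)$ settles the value $q$ at the kernel point in case (ii) and the restriction to $\ell_1$ in case (iii). After finitely many extractions, $g_m$ converges pointwise everywhere, and the limit has one of the three listed forms.
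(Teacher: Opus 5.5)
Your Cartan reduction handles the invertible limit and case (ii) correctly. The one nontrivial step, however, is not proved: pointwise convergence on the exceptional line $\ell$ when the normalized limit $A$ has rank one, i.e.\ your cases (i) and (iii). Several of your concrete claims there are false.

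\emph{The normalization fails.} ``$k_m'=k'$ eventually'' cannot be arranged in general. Write $k_m'k'^{-1}$ as a lower-triangular factor times an upper-triangular factor. Only the lower-triangular factor $n_m\to I$ passes through $D_m$, since $D_mn_mD_m^{-1}\to I$. The strictly upper-triangular entries get multiplied by $a_m/b_m$, $a_m/c_m$, $b_m/c_m$, and they carry exactly the rate information you are trying to discard. The right singular directions of $g_m$ are intrinsic, so no choice of Cartan decomposition removes this.

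\emph{The fallback fails too.} Convergence of $g_m$ at one auxiliary point of $\ell$ says nothing about the other points of $\ell$. Also, $g_m|_\ell$ is not a self-map of $\ell$, since the lines $g_m(\ell)$ move; so $\SP(2,\C)$ is not the relevant compact space.

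Concretely, let $k_m=I$, $D_m=\mathrm{diag}(e^{2m},e^{-m},e^{-m})$, and $k_m'=R_m$, the rotation by angle $1/m$ in the $(e_1,e_2)$-plane. Then $\beta=0$, $\gamma=1$, $k'=I$, and $D_mR_m(v_2e_2+v_3e_3)=(-e^{2m}\sin(1/m)\,v_2,\ e^{-m}\cos(1/m)\,v_2,\ e^{-m}v_3)$. Hence $g_m\to[e_1]$ on $\Ptwo\setminus\{[e_3]\}$ while $g_m[e_3]=[e_3]$. So in your case (iii) the limit is of type (1), not type (3).

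Now take $D_m=\mathrm{diag}(e^{2m},1,e^{-2m})$ and $R_m$ the rotation by $1/m$ in the $(e_1,e_3)$-plane, which is your case (i). The exceptional point is $[e_2]$, not $k'^{-1}[e_3]=[e_3]$. With angle $e^{-2m}$ instead, the limit on $\ell$ is an isomorphism onto $\mathbb P\langle e_1,e_2\rangle$, i.e.\ type (3). So the Cartan data $(\beta,\gamma,k,k')$ determine neither the Furstenberg type nor the point $p$.

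The missing idea is the second-level renormalization the paper uses:
\begin{enumerate}[label=(\alph*)]
\item Restrict the normalized matrices $c_m\widetilde g_m\to A$ to the fixed plane $\Ker A\subset\C^3$.
\item Rescale these restrictions to norm one in $\Hom(\Ker A,\C^3)$ and extract a nonzero limit $B$.
\item If $\operatorname{rank}B=2$, the limit is of type (3).
\item If $\operatorname{rank}B=1$, the limit is of type (1) with $p=\mathbb P(\Ker B)$, and one last extraction gives $q_0=\lim g_m(p)$.
\end{enumerate}
This argument needs no Cartan decomposition at all.
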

\begin{proof}
Choose matrix representatives $A_m\in\mathrm{SL}(3,\C)$ and scalars $c_m\in\C^*$ so that $\|c_mA_m\|=1$. After passing to a subsequence,
\[
c_mA_m\longrightarrow A\ne0
\]
in $M_{3\times3}(\C)$. If $\operatorname{rank}A=3$, the induced projective transformations converge to the element of $\PSL(3,\C)$ represented by $A$.

Suppose that $\operatorname{rank}A=2$. Then $\mathbb P(\Ker A)$ is a point $p$, and on $\Ptwo\setminus\{p\}$ the sequence converges to the rank-two pseudo--projective map induced by $A$. Since $A_m(p)$ is defined for every $m$ and $\Ptwo$ is compact, we may pass to a further subsequence for which $A_m(p)$ converges to a point $q$. The resulting pointwise limit is therefore of type~(2).

It remains to consider $\operatorname{rank}A=1$. Put $L:=\mathbb P(\Ker A)$, a projective line. On $\Ptwo\setminus L$ the sequence converges to the constant point $\mathbb P(\Im A)$. To analyze the restriction to $L$, choose nonzero scalars $d_m$ so that
\[
\bigl\|d_m(c_mA_m)|_{\Ker A}\bigr\|=1.
\]
After passing to a subsequence,
\[
d_m(c_mA_m)|_{\Ker A}\longrightarrow B:\Ker A\longrightarrow\C^3,\qquad B\ne0.
\]
If $\operatorname{rank}B=2$, the projectivization of $B$ is a projective isomorphism from $L$ onto the projective line $\mathbb P(\Im B)$; together with the constant behavior off $L$, this gives type~(3). If $\operatorname{rank}B=1$, then $\mathbb P(\Ker B)=\{p\}$ is a point of $L$ and the restriction to $L\setminus\{p\}$ converges to the constant point $\mathbb P(\Im B)$. Finally, compactness of $\Ptwo$ allows us to pass to a further subsequence for which $A_m(p)$ converges. Thus the pointwise limit is of type~(1). In all cases we obtain a Furstenberg map after passing to a subsequence.
\end{proof}

\begin{definition}
Let $\mathcal F(\Ptwo)$ denote the set of Furstenberg maps of $\Ptwo$. For $\rho\in\mathcal F(\Ptwo)$ we write
\[
\Image(\rho):=\rho(\Ptwo)
\]
for its pointwise image. For a subgroup $G\subset\PSL(3,\C)$, the Furstenberg boundary $\BFG$ is the set of all Furstenberg maps arising as pointwise limits along sequences of pairwise distinct elements of $G$. In particular, $\BFG=\varnothing$ when $G$ is finite.
\end{definition}

\medskip\noindent\emph{Wonderful compactification.}
We recall only the features that will be used later. For the general theory of wonderful compactifications see~\cite{DCP86}; for the complete-collineation realization and its determinantal blow-up description see~\cite{Vain84}. Since $\PSL(3,\C)\simeq\PGL(3,\C)$, we do not distinguish between these groups.

Let $B\subset\PSL(3,\C)$ be a Borel subgroup, and let $P_{\alpha_1},P_{\alpha_2}\subset\PSL(3,\C)$ be the maximal parabolic subgroups corresponding to the simple roots $\alpha_1,\alpha_2$. Then
\[
\PSL(3,\C)/B\simeq\Fl(\Ptwo),\qquad
\PSL(3,\C)/P_{\alpha_1}\simeq\Ptwo,\qquad
\PSL(3,\C)/P_{\alpha_2}\simeq\Pdual.
\]
Consider the projective space
\[
\SP(3,\C)=\mathbb P(M_{3\times3}(\C)),
\]
which contains $\PGL(3,\C)$ as the open subset of invertible classes. Its rank-one locus is naturally identified with
\[
\Sigma\simeq\Ptwo\times\Pdual,
\]
since a rank-one endomorphism is determined by its image point and kernel line.

In rank three, the complete-collineation model realizes the wonderful compactification of $\PGL(3,\C)$ by blowing up $\SP(3,\C)$ along the rank-one locus $\Sigma$; see~\cite{Vain84}. This blow-up separates kernel and image data and resolves their incidence.

The boundary
\[
\PGL(3,\C)^{\mathrm{wond}}\setminus\PGL(3,\C)
\]
is a simple normal crossings divisor with two irreducible components, corresponding to the two simple roots. We denote them by $D_{\alpha_1}$ and $D_{\alpha_2}$, using the convention that, in the positive Weyl-chamber chart used below, the associated normal parameters are
\[
t_i=e^{-\delta_i},\qquad i=1,2,
\]
where $\delta_1=\lambda_1-\lambda_2$ and $\delta_2=\lambda_2-\lambda_3$ are the two simple Cartan gaps. Thus $D_{\alpha_i}=\{t_i=0\}$ in this chart. The closed stratum is isomorphic to
\[
(\PGL(3,\C)/B)\times(\PGL(3,\C)/B).
\]
These compactifications encode complementary aspects of a degeneration. The pseudo--projective compactification remains the primary framework for the asymptotic analysis below.

\section{Asymptotic flag geometry}
We now analyze divergent sequences in $\PSL(3,\C)$ and prove Theorem~1. The Cartan decomposition reduces the problem to diagonal sequences, whose relative rates of divergence determine the associated full or partial flag data.

\subsection{Cartan decomposition}
Let $g\in\mathrm{SL}(3,\C)$. By the Cartan decomposition, one can write
\[
g=k_1ak_2,
\]
where $k_1,k_2\in\mathrm{SU}(3)$ and
\[
a=\operatorname{diag}(e^{\lambda_1},e^{\lambda_2},e^{\lambda_3}),
\qquad
\lambda_1\ge\lambda_2\ge\lambda_3,
\qquad
\lambda_1+\lambda_2+\lambda_3=0.
\]
The vector $(\lambda_1,\lambda_2,\lambda_3)$ is uniquely determined and defines the Cartan projection.

Let $(g_n)\subset\PSL(3,\C)$ be a sequence of pairwise distinct elements. After choosing lifts in $\mathrm{SL}(3,\C)$ and passing to a subsequence, we may assume that
\[
g_n=k_na_nk_n',
\qquad
a_n=\operatorname{diag}(e^{\lambda_1^{(n)}},e^{\lambda_2^{(n)}},e^{\lambda_3^{(n)}}),
\]
and
\[
\lambda_1^{(n)}\ge\lambda_2^{(n)}\ge\lambda_3^{(n)},
\qquad
\lambda_1^{(n)}+\lambda_2^{(n)}+\lambda_3^{(n)}=0.
\]
The sequence $(g_n)$ is divergent if and only if
\[
\max_i|\lambda_i^{(n)}|\longrightarrow+\infty.
\]

\subsection{Reduction to diagonal dynamics}
After passing to a subsequence, we may assume that $(k_n)$ and $(k_n')$ converge. After composing on the left and right with the limiting unitary transformations, the asymptotic analysis reduces to the diagonal sequence $(a_n)$.

The asymptotic behavior is governed by the gaps
\[
\lambda_1^{(n)}-\lambda_2^{(n)},
\qquad
\lambda_2^{(n)}-\lambda_3^{(n)}.
\]
For $g\in\PSL(3,\C)$, write
\[
\delta_1(g):=\lambda_1(g)-\lambda_2(g),
\qquad
\delta_2(g):=\lambda_2(g)-\lambda_3(g).
\]
A subgroup $G\subset\PSL(3,\C)$ is called $\theta$-divergent if, for every sequence $(g_n)$ of pairwise distinct elements of $G$,
\[
\delta_1(g_n)\longrightarrow+\infty,
\qquad
\delta_2(g_n)\longrightarrow+\infty.
\]
If both gaps of $g=k_1ak_2$ are positive, its left singular flag is well-defined and we write
\[
U(g):=k_1\bigl([e_1],\mathbb P\langle e_1,e_2\rangle\bigr)\in\Fl(\Ptwo).
\]
For a $\theta$-divergent group, all but finitely many terms of every sequence of pairwise distinct elements have positive gaps. Its full-flag Cartan limit set is
\[
\Lambda_\theta(G):=
\left\{
F\in\Fl(\Ptwo):
\begin{array}{c}
\text{there are pairwise distinct }g_n\in G\text{ with}\\
U(g_n)\longrightarrow F
\end{array}
\right\}.
\]
Equivalently, $\Lambda_\theta(G)$ is the accumulation set of the left singular flags. Since $\Fl(\Ptwo)$ is compact, this set is closed.

In higher-rank terminology, this is the full-chamber regularity condition for the type-$A_2$ symmetric space associated with $\PSL(3,\C)$: every sequence escaping to infinity moves away from both walls of the positive Weyl chamber. In the language of Kapovich--Leeb--Porti~\cite[Definition~6.9, Corollary~6.17]{KLP18}, such a group is $\sigma_{\mathrm{mod}}$-regular (equivalently, $\sigma_{\mathrm{mod}}$-convergence), and its small full-flag limit set agrees with the accumulation set $\Lambda_\theta(G)$ defined here. Benoist's full-flag limit set for Zariski-dense linear groups is an earlier version of the same regular boundary object~\cite{Benoist97}. We retain the present notation because the proofs below use the two simple Cartan gaps explicitly and because we need to compare this flag limit set with the projective Myrberg, Kulkarni, and Conze--Guivarc'h constructions.

To describe the accumulation pattern of images of nearby points, we use the following notion.

\begin{definition}
Let $(g_m)\subset\PSL(3,\C)$ and $x\in\Ptwo$. The set of destinations of $x$ is
\[
D_{(g_m)}(x)=
\left\{
y\ \middle|\
\begin{array}{c}
\exists\,x_m\to x,\ \exists\,m_j\to\infty\text{ such that}\\
g_{m_j}(x_{m_j})\to y
\end{array}
\right\}.
\]
\end{definition}

We also use the following continuity property.

\begin{lemma}
If $g_m\to\rho\in\SP(3,\C)$ and $h_m\to\varrho\in\PSL(3,\C)$, then
\[
g_m\circ h_m\to\rho\circ\varrho,
\qquad
h_m\circ g_m\to\varrho\circ\rho.
\]
\end{lemma}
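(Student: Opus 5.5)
The plan is to argue directly at the level of matrix representatives, since convergence in $\SP(3,\C)$ is convergence of normalized matrices. I choose lifts $\widetilde g_m\in M_{3\times3}(\C)\setminus\{0\}$ and scalars $c_m\in\C^*$ with $c_m\widetilde g_m\to R$, where $R\neq0$ represents $\rho$. Since $\varrho\in\PSL(3,\C)$, I choose lifts $\widetilde h_m\in\mathrm{SL}(3,\C)$ and scalars $d_m$ with $d_m\widetilde h_m\to H$, where $H$ is invertible. The normalization of $h_m$ is harmless here because its limit has full rank.

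Matrix multiplication $M_{3\times3}(\C)\times M_{3\times3}(\C)\to M_{3\times3}(\C)$ is continuous. Therefore
\[
(c_md_m)\,\widetilde g_m\widetilde h_m=(c_m\widetilde g_m)(d_m\widetilde h_m)\longrightarrow RH,
\]
and similarly $(d_mc_m)\,\widetilde h_m\widetilde g_m\to HR$. The one point to check is that the limit matrices are nonzero, since otherwise they would not define points of $\SP(3,\C)$. This holds because $H$ is invertible and $R\neq0$, so $RH\neq0$ and $HR\neq0$; in fact both have the same rank as $R$. Hence the classes $[RH]$ and $[HR]$ are well-defined elements of $\SP(3,\C)$.

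Since the projection $M_{3\times3}(\C)\setminus\{0\}\to\SP(3,\C)$ is continuous, we get $g_m\circ h_m\to[RH]$ and $h_m\circ g_m\to[HR]$ in $\SP(3,\C)$.

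It remains to identify these limits with the compositions $\rho\circ\varrho$ and $\varrho\circ\rho$ as pseudo--projective maps. The class $[RH]$ induces the map $x\mapsto R(H x)$, defined on the complement of
\[
\Ker(RH)=\varrho^{-1}(\Ker\rho).
\]
This is exactly $\rho\circ\varrho$ on its natural domain. Likewise $[HR]$ has kernel $\Ker\rho$ and induces $\varrho\circ\rho$.

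If locally uniform convergence is also wanted, it follows from the Proposition on compactness of sequences in $\SP(3,\C)$ applied to the limits just identified. Alternatively, it can be checked directly: on compact sets $K\subset\Ptwo\setminus\Ker(RH)$, the vectors $RHx$ stay uniformly away from $0$.

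There is no serious obstacle. The only care needed is the nondegeneracy of the limiting product, and this is exactly where invertibility of $\varrho$ is used. If instead both limits were singular, the product could vanish, for instance when $\Image(H)\subset\Ker(R)$. In that case the composition would not be defined, which is why the hypothesis $\varrho\in\PSL(3,\C)$ is essential.
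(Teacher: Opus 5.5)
Your proof is correct and is essentially the paper's argument. You choose scalar-normalized lifts with $c_m\widetilde g_m\to R\neq 0$ and $d_m\widetilde h_m\to H$ invertible, then use continuity of matrix multiplication to get $RH$ and $HR$, which are nonzero precisely because $H$ is invertible; your explicit identification $\Ker(RH)=\varrho^{-1}(\Ker\rho)$, $\Ker(HR)=\Ker\rho$ is a correct detail that the paper leaves implicit.
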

\begin{proof}
Choose representatives so that $t_m\widetilde g_m\to A$ and $\widetilde h_m\to B$ with $B$ invertible. Then
\[
t_m(\widetilde g_m\widetilde h_m)\to AB,
\qquad
t_m(\widetilde h_m\widetilde g_m)\to BA,
\]
which proves the claim.
\end{proof}

\subsection{Proof of Theorem 1}
We prove the asymptotic trichotomy.
\begin{proof}
Choose lifts $\widetilde g_m\in\mathrm{SL}(3,\C)$ and write
\[
\widetilde g_m=\kappa_m^+D_m\kappa_m^-,
\qquad
D_m=\operatorname{diag}(e^{\alpha_m},e^{\beta_m},e^{\gamma_m}),
\]
with
\[
\alpha_m\ge\beta_m\ge\gamma_m,
\qquad
\alpha_m+\beta_m+\gamma_m=0.
\]
Since $(g_m)$ is divergent, we have
\[
\max\{|\alpha_m|,|\beta_m|,|\gamma_m|\}\longrightarrow\infty,
\]
and, after passing to a subsequence, we may assume that $\alpha_m\to+\infty$ and $\gamma_m\to-\infty$.

Passing to a subsequence,
\[
\kappa_m^\pm\to\kappa^\pm,
\qquad
\alpha_m-\beta_m\to a\in[0,\infty],
\qquad
\beta_m-\gamma_m\to b\in[0,\infty].
\]
Since
\[
\alpha_m-\gamma_m=(\alpha_m-\beta_m)+(\beta_m-\gamma_m)\longrightarrow+\infty,
\]
the two gaps cannot both remain bounded. Hence at least one of $a,b$ is infinite. This yields three cases:
\[
\text{(I) }a=b=\infty,
\qquad
\text{(II) }a<\infty,\ b=\infty,
\qquad
\text{(III) }a=\infty,\ b<\infty.
\]

\emph{Case (I): full flags.} We have
\[
e^{-\alpha_m}D_m\to\operatorname{diag}(1,0,0),
\qquad
e^{\gamma_m}D_m^{-1}\to\operatorname{diag}(0,0,1).
\]
Thus
\[
[D_m]\to\upsilon^+,
\qquad
[D_m^{-1}]\to\upsilon^-,
\]
and
\[
g_m\to\rho^+,
\qquad
g_m^{-1}\to\rho^-.
\]
These limits determine flags $(p^\pm,\ell^\pm)$. Uniform convergence follows away from kernels.

We now verify the statement about destinations. It is enough to work in the diagonal model. In these coordinates,
\[
\ell^+=\{z_1=0\},
\qquad
p^+=[0:0:1],
\qquad
\ell^-=\{z_3=0\}.
\]
Let
\[
x=[0:u:v]\in\ell^+\setminus\{p^+\},
\]
so that $u\ne0$, and let
\[
x_m=[\xi_m:u_m:v_m]\longrightarrow x.
\]
Then
\[
D_mx_m=[e^{\alpha_m}\xi_m:e^{\beta_m}u_m:e^{\gamma_m}v_m].
\]
Dividing by $e^{\beta_m}u_m$, we obtain
\[
D_mx_m=
\left[
e^{\alpha_m-\beta_m}\frac{\xi_m}{u_m}:1:
e^{\gamma_m-\beta_m}\frac{v_m}{u_m}
\right].
\]
Since $\beta_m-\gamma_m\to+\infty$, the third coordinate tends to $0$. On the other hand, because $\alpha_m-\beta_m\to+\infty$, by choosing $\xi_m\to0$ appropriately the first coordinate can be made to converge to any prescribed value in $\C$, and it can also diverge to infinity. Therefore every point of the line
\[
\ell^-=\{z_3=0\}
\]
arises as a destination of $x$, and no destination can lie outside $\ell^-$. Hence
\[
D_{(g_m)}(x)=\ell^-.
\]
The corresponding statement for $g_m^{-1}$ follows by symmetry.

\emph{Case (II): point-to-line.} We obtain limits
\[
g_m\to\rho^+,
\qquad
g_m^{-1}\to\rho^-,
\]
with
\[
\Ker(\rho^+)=\{p^+\},
\qquad
\Image(\rho^+)=\ell^-,
\]
\[
\Ker(\rho^-)=\ell^-,
\qquad
\Image(\rho^-)=\{p^+\}.
\]
Uniform convergence holds on compact subsets away from the kernel. If $K\subset\Ptwo\setminus\{p^+\}$ is compact, then the accumulation set of $\{g_m(x):x\in K\}$ is exactly $\rho^+(K)$.

\emph{Case (III): line-to-point.} This case is symmetric. One obtains
\[
\Ker(\rho^+)=\ell^+,
\qquad
\Image(\rho^+)=\{p^-\},
\]
\[
\Ker(\rho^-)=\{p^-\},
\qquad
\Image(\rho^-)=\ell^+.
\]
Uniform convergence again holds away from the kernels, and images collapse to $p^-$. The three cases are mutually exclusive and exhaustive.
\end{proof}

\subsection{Compatibility between compactifications}
We compare the three compactifications through the asymptotic data carried by a fixed divergent sequence. They are not being identified: the pseudo--projective compactification records the principal kernel and image; the pointwise Furstenberg-type compactification retains the corresponding pointwise behavior and may refine the exceptional locus; and the wonderful compactification records the parabolic type of the degeneration. Their common datum is the full or partial flag supplied by Theorem~1.

Let $D_{\alpha_1}$ and $D_{\alpha_2}$ denote the two irreducible boundary divisors of the wonderful compactification, indexed by the simple roots, and write
\[
D_{\alpha_i}^{\circ}:=D_{\alpha_i}\setminus(D_{\alpha_1}\cap D_{\alpha_2}).
\]
Thus $D_{\alpha_1}\cap D_{\alpha_2}$ is the unique closed $\PSL(3,\C)\times\PSL(3,\C)$-orbit in the boundary.

\begin{proposition}
Let $G\subset\PSL(3,\C)$ be a discrete subgroup, and let $(g_n)\subset G$ be a sequence of pairwise distinct elements. Suppose that, along the same subsequence,
\[
g_n\to\tau^+,
\qquad
g_n^{-1}\to\tau^-
\quad\text{in }\SP(3,\C),
\]
\[
g_n\to\gamma^+,
\qquad
g_n^{-1}\to\gamma^-
\quad\text{pointwise on }\Ptwo,
\]
and
\[
g_n\to\xi^+,
\qquad
g_n^{-1}\to\xi^-
\quad\text{in the wonderful compactification}.
\]
Let $F^+$ and $F^-$ denote, respectively, the source-side and target-side full or partial flag data supplied by Theorem~1. Thus, in the full-flag case, $F^+$ is the repelling/source flag and $F^-$ is the attracting/target flag. Then exactly one of the following mutually exclusive alternatives holds.
\begin{enumerate}[label=(\arabic*)]
\item \emph{Full-flag case.} There are full flags
\[
F^+=(p^+,\ell^+),
\qquad
F^-=(p^-,\ell^-),
\qquad
p^\pm\in\ell^\pm,
\]
and
\[
\Ker(\tau^+)=\ell^+,
\quad
\Image(\tau^+)=\{p^-\},
\quad
\Ker(\tau^-)=\ell^-,
\quad
\Image(\tau^-)=\{p^+\}.
\]
Consequently,
\[
\gamma^+|_{\Ptwo\setminus\ell^+}\equiv p^- ,
\qquad
\gamma^-|_{\Ptwo\setminus\ell^-}\equiv p^+.
\]
Any remaining pointwise behavior is confined to the exceptional lines and constitutes the finer Furstenberg information attached to the same full-flag degeneration. Moreover,
\[
\xi^+,\xi^-\in D_{\alpha_1}\cap D_{\alpha_2}.
\]

\item \emph{Point-to-line case.} One has
\[
F^+=p^+\in\Ptwo,
\qquad
F^-=\ell^-\subset\Ptwo,
\]
and
\[
\Ker(\tau^+)=\{p^+\},
\quad
\Image(\tau^+)=\ell^-,
\quad
\Ker(\tau^-)=\ell^-,
\quad
\Image(\tau^-)=\{p^+\}.
\]
On $\Ptwo\setminus\{p^+\}$ the Furstenberg limit $\gamma^+$ agrees with the rank-two pseudo--projective map induced by $\tau^+$; in particular it is of type~(2) in Definition~1.5, with source point $p^+$ and image line $\ell^-$. On the inverse side,
\[
\gamma^-|_{\Ptwo\setminus\ell^-}\equiv p^+,
\]
so any further pointwise refinement of $\gamma^-$ is supported on the exceptional line $\ell^-$. The wonderful limits distinguish the two maximal parabolic types:
\[
\xi^+\in D_{\alpha_2}^{\circ},
\qquad
\xi^-\in D_{\alpha_1}^{\circ}.
\]

\item \emph{Line-to-point case.} One has
\[
F^+=\ell^+\subset\Ptwo,
\qquad
F^-=p^-\in\Ptwo,
\]
and
\[
\Ker(\tau^+)=\ell^+,
\quad
\Image(\tau^+)=\{p^-\},
\quad
\Ker(\tau^-)=\{p^-\},
\quad
\Image(\tau^-)=\ell^+.
\]
Here
\[
\gamma^+|_{\Ptwo\setminus\ell^+}\equiv p^- ,
\]
while on $\Ptwo\setminus\{p^-\}$ the limit $\gamma^-$ agrees with the rank-two pseudo--projective map induced by $\tau^-$ and is therefore of type~(2), with source point $p^-$ and image line $\ell^+$. Any finer pointwise behavior of $\gamma^+$ is confined to $\ell^+$. In the wonderful compactification the two divisor types are interchanged:
\[
\xi^+\in D_{\alpha_1}^{\circ},
\qquad
\xi^-\in D_{\alpha_2}^{\circ}.
\]
\end{enumerate}
\end{proposition}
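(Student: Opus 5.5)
The plan is to reduce everything to the diagonal model supplied by the Cartan decomposition used in the proof of Theorem~1, and to read off the three kinds of limits from the same normalized matrices. Write lifts $\widetilde g_n=\kappa_n^+D_n\kappa_n^-$ with $D_n=\operatorname{diag}(e^{\alpha_n},e^{\beta_n},e^{\gamma_n})$, and pass to the subsequence on which $\kappa_n^\pm\to\kappa^\pm$ and the gaps converge to $a,b\in[0,\infty]$. The given convergences in $\SP(3,\C)$, pointwise, and in the wonderful compactification are already assumed along one common subsequence, so the limits $\tau^\pm,\gamma^\pm,\xi^\pm$ are determined by any further subsequence, and we may refine freely. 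By the continuity lemma for composition with convergent invertible sequences, it suffices to treat $[D_n^{\pm1}]$ and then transport kernels, images and flags by $\kappa^\pm$. The case split is exactly (I) $a=b=\infty$, (II) $a<\infty=b$, (III) $a=\infty>b$. These are mutually exclusive and exhaustive because $\alpha_n-\gamma_n\to\infty$, and they give the three alternatives.

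The $\SP(3,\C)$ data come from the explicit normalized limits. In case (I), $e^{-\alpha_n}D_n\to\operatorname{diag}(1,0,0)$ and $e^{\gamma_n}D_n^{-1}\to\operatorname{diag}(0,0,1)$; these are rank one, and transporting them gives $\Ker\tau^\pm$ and $\Image\tau^\pm$ as stated. In case (II), $e^{-\alpha_n}D_n\to\operatorname{diag}(1,e^{-a},0)$ has rank two, with kernel the point and image the line, while $D_n^{-1}$ normalizes to $\operatorname{diag}(0,0,1)$. Case (III) follows by symmetry, or by applying case (II) to $(g_n^{-1})$. For the pointwise limits $\gamma^\pm$, note that the Furstenberg limit agrees with the $\SP$ limit off its kernel, because the $\SP$ convergence is locally uniform there (Proposition~1.3) and so in particular pointwise. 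The type~(2) statement in cases (II) and (III) then follows from the rank-two analysis in the proof of Lemma~1.6. The remaining pointwise information lives on the kernel, which is the exceptional line (or point), as stated.

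The step I expect to be the main obstacle is the wonderful-compactification assertion, because the paper has recalled that compactification only through its local chart, with normal coordinates $t_i=e^{-\delta_i}$. The plan is as follows. On the open chart $K\cdot\overline{A^+}\cdot K$ near the boundary, a sequence $\kappa_n^+D_n\kappa_n^-$ with convergent $\kappa_n^\pm$ converges to a point whose divisor membership is decided by $t_i=\lim e^{-\delta_i(g_n)}$. The limit lies in $D_{\alpha_i}$ exactly when $\delta_i\to\infty$, and in $D_{\alpha_i}^\circ$ exactly when, in addition, the other gap stays bounded. Thus case (I) gives $t_1=t_2=0$, hence $\xi^\pm\in D_{\alpha_1}\cap D_{\alpha_2}$. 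In case (II), $\delta_2(g_n)\to\infty$ and $\delta_1$ stays bounded, so $\xi^+\in D_{\alpha_2}^\circ$. For the inverse we use $\delta_1(g^{-1})=\delta_2(g)$ and $\delta_2(g^{-1})=\delta_1(g)$, since the Cartan projection of $g^{-1}$ is $(-\lambda_3,-\lambda_2,-\lambda_1)$; this gives $\xi^-\in D_{\alpha_1}^\circ$. Case (III) interchanges the two divisors. To justify the chart description I would cite the standard fact from \cite{DCP86}, or the complete-collineation picture of \cite{Vain84}: the boundary strata are the $\PSL(3,\C)\times\PSL(3,\C)$-orbits indexed by subsets of simple roots, and $\overline{A^+}$ embeds via $(t_1,t_2)\in[0,1]^2$. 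One point needs care: the normalization convention for labelling $\alpha_1$ versus $\alpha_2$ must match the one fixed earlier, $D_{\alpha_i}=\{t_i=0\}$. With that convention the computation is immediate.
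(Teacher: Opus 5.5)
Your proposal is correct and follows essentially the same route as the paper. Both arguments use the Cartan decomposition with convergent unitary factors and the normalized diagonal limits $e^{-\lambda_1}D_n$ and $e^{\lambda_3}D_n^{-1}$ to read off kernels and images in the three gap regimes, the agreement of the pointwise limit with the pseudo--projective limit off the kernel, and the wonderful-chart parameters $t_i=e^{-\delta_i}$ together with the swap $\delta_1(g^{-1})=\delta_2(g)$ for the inverse sequence.
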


\begin{proof}
Choose lifts $\widetilde g_n\in\mathrm{SL}(3,\C)$ and, after passing to a further subsequence without changing any of the limits in the statement, write
\[
\widetilde g_n=\kappa_n^+D_n\kappa_n^- ,
\qquad
D_n=\operatorname{diag}(e^{\lambda_{1,n}},e^{\lambda_{2,n}},e^{\lambda_{3,n}}),
\]
where
\[
\lambda_{1,n}\ge\lambda_{2,n}\ge\lambda_{3,n},
\qquad
\lambda_{1,n}+\lambda_{2,n}+\lambda_{3,n}=0,
\qquad
\kappa_n^\pm\longrightarrow\kappa^\pm\in\mathrm{SU}(3).
\]
Set
\[
\delta_1(n):=\lambda_{1,n}-\lambda_{2,n},
\qquad
\delta_2(n):=\lambda_{2,n}-\lambda_{3,n}.
\]
Because $G$ is discrete and the $g_n$ are pairwise distinct, $(g_n)$ leaves every compact subset of $\PSL(3,\C)$. Hence its Cartan projection is unbounded. After passing to a subsequence, each $\delta_i(n)$ either remains bounded and converges in $[0,+\infty)$ or tends to $+\infty$; the two gaps cannot both remain bounded. We are therefore in exactly one of the three gap regimes of Theorem~1.

We first recover the pseudo--projective data directly from the gaps. The normalization by the largest singular value gives
\[
e^{-\lambda_{1,n}}D_n=
\operatorname{diag}\bigl(1,e^{-\delta_1(n)},e^{-(\delta_1(n)+\delta_2(n))}\bigr).
\]
For the inverse sequence, normalization by its largest singular value gives
\[
e^{\lambda_{3,n}}D_n^{-1}=
\operatorname{diag}\bigl(e^{-(\delta_1(n)+\delta_2(n))},e^{-\delta_2(n)},1\bigr).
\]
If both gaps tend to $+\infty$, both normalized matrices above converge to rank-one matrices. In terms of the limiting unitary factors,
\[
p^-=[\kappa^+e_1],
\qquad
\ell^+=\mathbb P\bigl((\kappa^-)^{-1}\langle e_2,e_3\rangle\bigr),
\]
\[
p^+=[(\kappa^-)^{-1}e_3],
\qquad
\ell^-=\mathbb P\bigl(\kappa^+\langle e_1,e_2\rangle\bigr).
\]
These formulas give
\[
\Ker(\tau^+)=\ell^+,
\quad
\Image(\tau^+)=\{p^-\},
\quad
\Ker(\tau^-)=\ell^-,
\quad
\Image(\tau^-)=\{p^+\}.
\]
They also make the incidence relations transparent: $p^+\in\ell^+$ and $p^-\in\ell^-$, because $e_3\in\langle e_2,e_3\rangle$ and $e_1\in\langle e_1,e_2\rangle$. With the Cartan convention used below, the left singular flags satisfy
\[
U(g_n)\longrightarrow F^-=(p^-,\ell^-),
\qquad
U(g_n^{-1})\longrightarrow F^+=(p^+,\ell^+).
\]
This fixes the $+/-$ convention for the remainder of the comparison.

Suppose next that $\delta_1(n)\to d<+\infty$ and $\delta_2(n)\to+\infty$. Then the forward normalization converges to
\[
\operatorname{diag}(1,e^{-d},0),
\]
which has rank two, whereas the inverse normalization converges to $\operatorname{diag}(0,0,1)$, which has rank one. After conjugating by the limiting unitary factors this is exactly
\[
\Ker(\tau^+)=\{p^+\},
\quad
\Image(\tau^+)=\ell^-,
\quad
\Ker(\tau^-)=\ell^-,
\quad
\Image(\tau^-)=\{p^+\}.
\]
This is the point-to-line alternative. The case $\delta_1(n)\to+\infty$ and $\delta_2(n)\to d<+\infty$ is obtained by the same calculation, or equivalently by applying the preceding argument to $(g_n^{-1})$; it gives the line-to-point alternative and the kernel--image relations stated in~(3).

We next compare these limits with Furstenberg convergence. Convergence in $\SP(3,\C)$ implies uniform convergence on compact subsets of the complement of the pseudo--projective kernel. Since the pointwise Furstenberg limits are taken along the same subsequence, they must agree there with the projectivizations of $\tau^+$ and $\tau^-$. Thus a rank-two limit with point kernel gives, on the complement of that point, precisely the rank-two map onto its image line; this is type~(2) in Definition~1.5. A rank-one limit with kernel line is constant on the complement of that line, with value its image point. The Furstenberg compactification may retain additional pointwise information on the exceptional line itself, but this does not alter the principal point--line datum. This proves the Furstenberg assertions. In particular, the rank of a pseudo--projective limit does not by itself determine the Furstenberg limit: the comparison identifies the common kernel--image and partial-flag data while allowing the pointwise compactification to retain additional information on the exceptional locus.

It remains to identify the wonderful boundary stratum. With the convention fixed above, the two normal parameters in the positive Weyl-chamber chart are
\[
t_1(g_n)=e^{-\delta_1(n)},
\qquad
t_2(g_n)=e^{-\delta_2(n)};
\]
see the description of the wonderful compactification recalled above and~\cite{DCP86}. Hence both gaps diverge exactly when both boundary parameters vanish, so that $\xi^+$ lies in
\[
D_{\alpha_1}\cap D_{\alpha_2},
\]
the closed orbit. If $\delta_1$ remains bounded and $\delta_2\to+\infty$, then only the $\alpha_2$ boundary parameter vanishes and $\xi^+\in D_{\alpha_2}^{\circ}$; if $\delta_1\to+\infty$ and $\delta_2$ remains bounded, then $\xi^+\in D_{\alpha_1}^{\circ}$. Finally, inversion interchanges the two simple gaps, since
\[
\delta_1(g_n^{-1})=\delta_2(g_n),
\qquad
\delta_2(g_n^{-1})=\delta_1(g_n).
\]
This gives the asserted strata for $\xi^-$.
\end{proof}

\subsection{Consequences for the Kulkarni strata}
We now pass from individual divergent sequences to the group as a whole. The next result identifies the first two Kulkarni layers in terms of images of Furstenberg limit maps.

\begin{proposition}
Let $G\subset\PSL(3,\C)$ be a discrete subgroup. Then
\[
L_0(G)\cup L_1(G)=\overline{\bigcup_{\rho\in\BFG}\Image(\rho)}.
\]
\end{proposition}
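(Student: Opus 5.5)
The plan is to prove the two inclusions separately, using Kulkarni's definitions: $L_0(G)$ is the closure of the set of points with infinite isotropy group, and $L_1(G)$ is the closure of the set of cluster points of orbits $\{g(z)\}$ along pairwise distinct $g\in G$, for $z\in\Ptwo\setminus L_0(G)$. The main tool is the Furstenberg compactness lemma (every sequence of pairwise distinct elements has a subsequence converging pointwise to a Furstenberg map), together with the structural description in Theorem~1 and in the compatibility proposition just proved.

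\emph{Inclusion $\subseteq$.} The right-hand side is closed, so it suffices to treat the generating points of $L_0$ and $L_1$.
If $x$ has infinite isotropy, choose pairwise distinct $g_n\in\Stab(x)$. Pass to a Furstenberg limit $\rho\in\BFG$. Since $g_n(x)=x$ for all $n$, pointwise convergence gives $\rho(x)=x$, so $x\in\Image(\rho)$.
If $y$ is a cluster point of $\{g_n(z)\}$ with $z\notin L_0$ and the $g_n$ pairwise distinct, pass to a subsequence with $g_n\to\rho$ pointwise and $g_n(z)\to y$. Then $y=\rho(z)\in\Image(\rho)$.

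\emph{Inclusion $\supseteq$.} Fix $\rho\in\BFG$, realised by pairwise distinct $g_n\to\rho$ pointwise, and let $y=\rho(z)$. Since $L_0\cup L_1$ is closed, it suffices to show $y\in L_0\cup L_1$.
If $z\notin L_0$, then $y=\lim g_n(z)$ is by definition a cluster point of the orbit of $z$, so $y\in L_1$.
The remaining case is $z\in L_0$. If $L_0=\Ptwo$ there is nothing to prove, so assume $\Ptwo\setminus L_0\neq\varnothing$. I would pass to a common subsequence along which $g_n\to\tau$ in $\SP(3,\C)$ and also $g_n^{-1}$ converges, and split according to the three alternatives of Theorem~1.
\begin{itemize}
\item If $z\notin\Ker(\tau)$, then $y=\tau(z)$. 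Uniform convergence near $z$ on the complement of the kernel lets me approximate $y$ by points $\tau(z')=\lim g_n(z')$ with $z'$ near $z$ and $z'\notin L_0$. Each such $\tau(z')$ lies in $L_1$, and closedness then gives $y\in L_1$. This step needs $\Ptwo\setminus L_0$ to accumulate at $z$.
\item If $z\in\Ker(\tau)$, the value $\rho(z)$ is one of the extra pointwise data $q_0,q_1$, or a point of $f(\ell_1)$ as in Definition~1.5. Here I would use the inverse sequence. By Theorem~1 the kernel line or point is the image of the inverse limit $\tau^-$, and $\rho(z)$ is a limit of $g_n(z)$ with $z$ lying in the exceptional locus. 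I would then try to show that $y$ is either fixed by infinitely many elements of the form $g_mg_n^{-1}$, which places it in $L_0$, or is a destination of a generic point.
\end{itemize}

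The hard part is the case $z\in L_0$, and in particular the sub-case where $z$ lies in an open part of $L_0$ or in the kernel of $\tau$. There, pointwise values on the exceptional locus are not controlled by generic orbits.
My first attempt would be a diagonal argument. Choose $z_k\to z$ with $z_k\notin L_0$. Using the explicit diagonal model of Theorem~1, choose indices $n_k$ so that $g_{n_k}(z_k)$ stays close to $g_{n_k}(z)$. This is possible because each fixed $g_n$ is continuous, so for each $n$ one can take $z_k$ close enough to $z$. Then $y$ is a limit of orbit points of non-$L_0$ points.
The catch is that $y$ must then be shown to be a cluster point of a single orbit. I would try to fix this using the $G$-invariance and closedness of $L_1$, together with the fact that $L_1$ already contains $\Image(\tau)$ for generic points. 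If that fails, I would fall back on the known description of $\Lambda_{\Kul}$ through kernels of pseudo-projective limits from~\cite{BCN11,CCL26}.
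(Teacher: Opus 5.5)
\paragraph{Verdict.} There is a genuine gap: the case $z\in L_0(G)$ of the reverse inclusion is left open. Your inclusion $L_0(G)\cup L_1(G)\subseteq\overline{\bigcup_{\rho\in\BFG}\Image(\rho)}$ is fine and agrees with the paper, and so is the case $z\notin L_0(G)$ of the reverse inclusion.

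\paragraph{Why your attempts do not close the case.}
\begin{itemize}
\item The approximation step for $z\notin\Ker(\tau)$ needs $\Ptwo\setminus L_0(G)$ to accumulate at $z$. This fails whenever $z$ is an interior point of $L_0(G)$.
\item The treatment of $z\in\Ker(\tau)$, via fixed points of elements $g_mg_n^{-1}$ or destinations of generic points, is only a sketch and not an argument.
\item The diagonal construction $g_{n_k}(z_k)$ uses moving base points $z_k$. As you note yourself, it shows only that $y$ is a limit of points of different orbits, not a cluster point of a single orbit $Gz'$ with $z'\notin L_0(G)$.
\item The fallback to the known kernel description of the limit set would at best place $y$ in $\Lambda_{\Kul}(G)=L_0\cup L_1\cup L_2$, not in $L_0\cup L_1$.
\end{itemize}

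\paragraph{The missing idea.} You are aiming at the wrong stratum: when $z\in L_0(G)$, the point $y$ lies in $L_0(G)$ itself, for a one-line reason.
\begin{itemize}
\item The set $E_0(G)$ of points with infinite isotropy is $G$-invariant, because $\Stab_G(gx)=g\,\Stab_G(x)\,g^{-1}$.
\item Each $g\in G$ is a homeomorphism of $\Ptwo$, so the closure $L_0(G)=\overline{E_0(G)}$ is also $G$-invariant, and it is closed.
\item Hence $g_n(z)\in L_0(G)$ for every $n$, and therefore $y=\lim g_n(z)\in L_0(G)$.
\end{itemize}
This is exactly how the paper disposes of this case. You do not need Theorem~1, the pseudo--projective limit $\tau$, the inverse sequence, or the structure of Furstenberg maps on the exceptional locus.
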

\begin{proof}
Set
\[
E_0(G):=\{x\in\Ptwo:\#\Stab_G(x)=\infty\}
\]
and let $E_1(G)$ denote the set of accumulation points of orbits $Gz$ with $z\in\Ptwo\setminus L_0(G)$. By definition of the first two Kulkarni strata,
\[
L_0(G)=\overline{E_0(G)},
\qquad
L_1(G)=\overline{E_1(G)}.
\]
We first show that
\[
E_0(G)\cup E_1(G)\subset
\bigcup_{\rho\in\BFG}\Image(\rho).
\]
If $x\in E_0(G)$, choose pairwise distinct elements $g_m\in G$ fixing $x$. After passing to a subsequence, the Furstenberg convergence property gives $g_m\to\rho\in\BFG$ pointwise on $\Ptwo$. Hence $\rho(x)=x$, and therefore $x\in\Image(\rho)$.

If $x\in E_1(G)$, there exist $z\notin L_0(G)$ and pairwise distinct orbit points $g_m(z)$ converging to $x$. Passing to a subsequence, we may assume that $g_m\to\rho\in\BFG$ pointwise. Thus $x=\rho(z)\in\Image(\rho)$. Taking closures, and using
\[
L_0(G)\cup L_1(G)
=
\overline{E_0(G)}\cup\overline{E_1(G)}
=
\overline{E_0(G)\cup E_1(G)},
\]
we obtain
\[
L_0(G)\cup L_1(G)\subset
\overline{\bigcup_{\rho\in\BFG}\Image(\rho)}.
\]

For the reverse inclusion, let $\rho\in\BFG$ and let $p\in\Image(\rho)$. Choose $z\in\Ptwo$ with $\rho(z)=p$ and a sequence of pairwise distinct elements $g_m\in G$ converging pointwise to $\rho$. Then $g_m(z)\to p$.

Suppose first that $z\notin L_0(G)$. The set $\{g_m(z):m\ge1\}$ must be infinite. Indeed, if it were finite, one of its values would occur for infinitely many indices. For infinitely many pairs $m\ne n$ we would then have $g_m(z)=g_n(z)$, hence $g_n^{-1}g_m\in\Stab_G(z)$; this would force $z\in E_0(G)\subset L_0(G)$, a contradiction. After passing to a subsequence, the points $g_m(z)$ are therefore pairwise distinct, so $p$ is an accumulation point of the orbit $Gz$. Hence $p\in E_1(G)\subset L_1(G)$.

If instead $z\in L_0(G)$, then $L_0(G)$ is $G$-invariant and closed. Consequently $g_m(z)\in L_0(G)$ for every $m$, and the convergence $g_m(z)\to p$ gives $p\in L_0(G)$. Thus
\[
\bigcup_{\rho\in\BFG}\Image(\rho)\subset L_0(G)\cup L_1(G).
\]
Since the right-hand side is closed, taking the closure on the left gives the opposite inclusion and proves the equality.
\end{proof}

\begin{remark}
If $G\subset\PSL(3,\C)$ is infinite and discrete, then $\Lambda_{\Kul}(G)$ contains at least one projective line; see~\cite{BCN16}.
\end{remark}

\section{Asymptotic flag data and limit sets}
We now prove Theorem~2. We first recall the projective boundary descriptions that will be used. Let $G\subset\PSL(3,\C)$ be discrete and write
\[
\BSPG=\{S\in\SP(3,\C):S\text{ is a limit of a sequence of distinct elements of }G\}.
\]
For a closed set $A\subset\Fl(\Ptwo)$, define its incidence Schubert thickening in $\Ptwo$ by
\begin{equation}
\Thinc(A):=\{x\in\Ptwo:\text{ there exists }(p,\ell)\in A\text{ with }x\in\ell\}
=\bigcup_{(p,\ell)\in A}\ell.
\end{equation}
For a single flag $(p,\ell)$, this is the Schubert cycle $\ell$; it is the type-$A_2$ incidence thickening appearing in regular flag dynamics; compare~\cite[Example~3.40]{KLP18}.

\subsection{Known projective boundary descriptions}
We shall use the following pseudo--projective description without reproving it. For every discrete subgroup $G\subset\PSL(3,\C)$,
\begin{equation}
\Eq(G)=\Ptwo\setminus\bigcup_{S\in\BSPG}\Ker(S),
\qquad
\Lambda_{\Myr}(G)=\bigcup_{S\in\BSPG}\Ker(S).
\end{equation}
See~\cite[Proposition~2.2]{CCL26} and the earlier pseudo--projective analysis in~\cite{BCN11}. In particular, no additional closure is required in~(2).

We use the terminology of Barrera--González-Urquiza--Navarrete~\cite{BGN18}. Thus $\widehat\Lambda_{\CoG}(G)\subset\Pdual$ denotes their extended Conze--Guivarc'h limit set: it is the set of limit points for the induced action of $G$ on the dual projective plane. Equivalently, $[\ell]\in\widehat\Lambda_{\CoG}(G)$ when there is a sequence of distinct elements of $G$ whose induced dual action converges to the constant point $[\ell]$ on a nonempty open subset of $\Pdual$. This is a dual-space construction; we do not use the same name for a separately defined set of point images in $\Ptwo$.

For comparison, define the pseudo--projective boundary sets
\[
A(G):=\{\Image(S):S\in\BSPG,\ \operatorname{rank}S=1\}\subset\Ptwo,
\]
\[
K(G):=\{[\Ker(S)]:S\in\BSPG,\ \operatorname{rank}S=1\}\subset\Pdual.
\]
Since $\SP(3,\C)$ is compact and $\BSPG$ is the accumulation set of $G$ in $\SP(3,\C)$, the set $\BSPG$ is closed. Its rank-one locus is therefore compact, and the image and kernel maps are continuous on that locus. Hence $A(G)$ and $K(G)$ are compact.

\begin{lemma}
Under the canonical bidual identification,
\[
K(G)=\widehat\Lambda_{\CoG}(G),
\qquad
A(G)=\widehat\Lambda_{\CoG}(\check G).
\]
In particular,
\[
\Lambda_{\CoG}(G)\subseteq\widehat\Lambda_{\CoG}(\check G),
\qquad
\Lambda_{\CoG}(\check G)\subseteq\widehat\Lambda_{\CoG}(G).
\]
\end{lemma}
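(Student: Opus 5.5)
The plan is to compute directly how a pseudo--projective limit of $G$ induces a limit of the dual action, and to use the explicit description of $\widehat\Lambda_{\CoG}$ recalled above. A line $\ell=\mathbb P(W)$ corresponds to $[\ell]\in\Pdual$, and $g$ acts on $\Pdual$ by $\check g=(g^{-1})^{T}$ (equivalently by $\wedge^2 g$, up to the identification $\mathbb P(\wedge^2\C^3)\simeq\Pdual$). If $g_n\to S$ in $\SP(3,\C)$, then, after passing to a subsequence, $g_n^{-1}\to S'$, and $\check g_n\to (S')^{T}$ in $\SP$ of the dual. The key algebraic fact is the Cartan computation from the proof of Proposition~2.3. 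If $\widetilde g_n=\kappa_n^+D_n\kappa_n^-$, then the normalized limits of $g_n$ and of $g_n^{-1}$ are governed by the gaps $\delta_1,\delta_2$. In particular, $\operatorname{rank}S=1$ with $\Ker S=\ell^+$ holds exactly when $\delta_1(n)\to\infty$. In that case $g_n^{-1}$ has limit $S'$ whose image contains $p^+$-side data, and the dual limit $\check S$ has rank one or two depending on $\delta_2$, with image containing $[\ell^+]$.

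For the inclusion $K(G)\subset\widehat\Lambda_{\CoG}(G)$, I would take $S\in\BSPG$ of rank one with $\Ker S=\ell$. Using the Cartan form, $g_n^{-1}$ then sends a generic line to a line near $(\kappa^-)^{-1}\langle e_2,e_3\rangle=\ell$. Concretely, the dual matrices $\check g_n$ have singular values $e^{-\lambda_{3,n}}\ge e^{-\lambda_{2,n}}\ge e^{-\lambda_{1,n}}$, with top gap $\delta_2(\check g_n)$, and the line $\wedge^2$-attracting point is $[\ell]$ exactly when $\delta_1(g_n)\to\infty$. So I would apply the sequence $g_n^{-1}$, or equivalently $\check g_n$ acting by pushforward of lines, and check that the induced dual maps converge to the constant $[\ell]$ on the open set of lines not passing through the point $p^-$. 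Indeed, for $\delta_1\to\infty$, $\wedge^2 D_n$ normalized converges to a rank-one matrix whose kernel is the set of lines through the limiting image point, and whose image is $[\ell]$ in the appropriate convention. Thus $[\ell]\in\widehat\Lambda_{\CoG}(G)$.

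For the converse, if the dual action of some $(h_n)$ converges to a constant $[\ell]$ on an open set, I would pass to a subsequence with pseudo--projective limits of $h_n$ and $h_n^{-1}$. The constant dual limit forces the normalized $\wedge^2$ limit to have rank one, i.e.\ $\delta_1\to\infty$ for the relevant sequence. Then the $\SP$ limit of that sequence is rank one with kernel $\ell$, giving $[\ell]\in K(G)$. The statement $A(G)=\widehat\Lambda_{\CoG}(\check G)$ follows by the same argument applied to $\check G$, together with bidual identification: the image of a rank-one limit of $g_n$ is the kernel datum of the transpose. The inclusions with the classical $\Lambda_{\CoG}$ come from taking $g_n=g^n$ for proximal $g$. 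Its $\SP$ limit is rank one with image the attracting point, so that point lies in $A(G)$; dually, the attracting line of a proximal $\wedge^2 g$ lies in $K(G)$. Closedness of $A(G)$ and $K(G)$ then handles closures.

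The main obstacle is bookkeeping of conventions. I need to decide whether BGN18 uses $g\mapsto (g^{-1})^T$ or $g^T$ on $\Pdual$, and whether its limit points are attractors of $g_n$ or of $g_n^{-1}$, so that kernel lines of forward limits, rather than image lines of inverse limits, are matched. I would fix the convention by checking a single diagonal sequence $\operatorname{diag}(e^{n},1,e^{-n})$ and then propagate by the Cartan reduction.
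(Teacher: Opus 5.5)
Your outline matches the paper's proof. You realize $[\Ker S]$ as the constant limit of the dual action along the \emph{inverse} sequence, force rank one in the converse and transpose back, pass to $\check G$ through the bidual, and use powers of proximal elements together with compactness of $A(G),K(G)$. Your end claims are right, including convergence to $[\ell]$ off the pencil of lines through $p^-$. The Cartan/$\wedge^2$ justifications you offer for them, however, are wrong in several places, all because you conflate the dual action of $g_n$ with that of $g_n^{-1}$:
\begin{itemize}
\item The action of $g_n^{-1}$ on lines is $g_n^{T}$, not $\check g_n=(g_n^{-1})^T$, so ``or equivalently $\check g_n$'' is false.
\item Normalized $\wedge^2D_n$ has entries $(1,e^{-\delta_2},e^{-\delta_1-\delta_2})$ on $e_1\wedge e_2,\,e_1\wedge e_3,\,e_2\wedge e_3$. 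It degenerates to rank one exactly when $\delta_2\to\infty$, and then its image is $[\ell^-]$. It is $\wedge^2D_n^{-1}$, with entries $(e^{-\delta_1-\delta_2},e^{-\delta_1},1)$, that has a rank-one limit when $\delta_1\to\infty$, with image $[\ell^+]$ and kernel the pencil through $p^-$.
\item For the same reason, the limit of $\check g_n$ has image the pencil through $p^-$ (or the point $[\ell^-]$). This does not contain $[\ell^+]$ in general, so the corresponding claim in your first paragraph is false.
\item In the converse, your ``relevant sequence'' must be $h_n^{-1}$, not $h_n$. For $h_n=\operatorname{diag}(e^n,e^n,e^{-2n})$, the dual action $(h_n^{-1})^T$ converges to the constant $[\{z_3=0\}]$ off a line of $\Pdual$. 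Yet $h_n\to\operatorname{diag}(1,1,0)$ has rank two; only $h_n^{-1}\to\operatorname{diag}(0,0,1)$ is rank one with kernel $\{z_3=0\}$.
\end{itemize}

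You do not need the Cartan decomposition at all, and it is where every one of these slips enters. The paper argues directly by transposition. If $h_n\to S=vw^T$ has rank one, then $h_n^T\to S^T=wv^T$. Its image is $[w]=[\Ker S]$, its kernel is the pencil of lines through $[v]=\Image(S)$, and $h_n^T$ is precisely the dual action of $h_n^{-1}\in G$. Conversely, a limit that is constant on a nonempty open set cannot have rank $\ge 2$. So the $\SP$-limit $T$ of the dual sequence has rank one with image $[\ell]$. Then $T^T$, the limit of the inverse sequence, is a rank-one element of $\BSPG$ with kernel $\ell$. Your stated convention worry also disappears: since $G=G^{-1}$, the families $\{(g^{-1})^T:g\in G\}$ and $\{g^T:g\in G\}$ coincide. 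Hence $\widehat\Lambda_{\CoG}(G)$ is the same under either convention, whether one thinks of attractors of $g_n$ or of $g_n^{-1}$.
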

\begin{proof}
Let $[\ell]\in K(G)$. Choose distinct $h_n\in G$ converging in $\SP(3,\C)$ to a rank-one map $S$ with $\Ker(S)=\ell$, and put $g_n=h_n^{-1}$. The induced action of $g_n$ on $\Pdual$ is represented by $h_n^T$. After choosing compatible scalar representatives, $h_n^T\to S^T$, whose image is the dual point $[\ell]$. Hence the dual action converges locally uniformly to $[\ell]$ off the kernel of $S^T$, so $[\ell]\in\widehat\Lambda_{\CoG}(G)$. Conversely, suppose a sequence in the dual action converges to a constant point $[\ell]$ on a nonempty open set. After passing to a subsequence with a pseudo--projective limit, that limit must have rank one: a rank-two projective map cannot be constant on a nonempty open set. Transposing and inverting as above produces a rank-one element $S\in\BSPG$ with $\Ker(S)=\ell$. Thus $K(G)=\widehat\Lambda_{\CoG}(G)$.

Apply the same statement to $\check G$. The dual of the contragredient action is canonically the original action on $\Ptwo$, so $K(\check G)=A(G)$. Hence $A(G)=\widehat\Lambda_{\CoG}(\check G)$. Finally, powers of a proximal element converge pseudo--projectively to its attracting rank-one projector, which gives $\Lambda_{\CoG}(G)\subseteq A(G)$. Applying the same argument to $\check G$ gives $\Lambda_{\CoG}(\check G)\subseteq K(G)$.
\end{proof}

\begin{remark}
No irreducibility or proximality assumption is required for the definition of the extended Conze--Guivarc'h set or for Lemma~3.1. Under the standard strong irreducibility and proximality hypotheses of Conze--Guivarc'h theory~\cite{CG00}, the contragredient action satisfies the same hypotheses. Together with the complex projective dual extension of~\cite{BGN18}, this yields equalities in the two inclusions of Lemma~3.1:
\[
\Lambda_{\CoG}(G)=\widehat\Lambda_{\CoG}(\check G),
\qquad
\Lambda_{\CoG}(\check G)=\widehat\Lambda_{\CoG}(G).
\]
Indeed, in this setting the classical Conze--Guivarc'h limit set is the unique minimal nonempty closed invariant set~\cite{CG00}, while rank-one boundary images arise as limits of attracting fixed points of proximal elements in the dual projective formulation of~\cite{BGN18}. These equalities are not used below.
\end{remark}

\subsection{The \texorpdfstring{$\theta$}{theta}-divergent regime}
Assume now that $G$ is $\theta$-divergent. Every $S\in\BSPG$ has rank one. Indeed, choose pairwise distinct $g_n\in G$ converging to $S$ in $\SP(3,\C)$ and write
\[
g_n=k_na_nr_n,
\qquad
a_n=\operatorname{diag}(e^{\lambda_1^{(n)}},e^{\lambda_2^{(n)}},e^{\lambda_3^{(n)}}).
\]
The two simple-root gaps tend to $+\infty$. After passing to a subsequence with $k_n\to k$ and $r_n\to r$, normalization by the largest singular value gives
\[
e^{-\lambda_1^{(n)}}a_n\longrightarrow\operatorname{diag}(1,0,0).
\]
Thus
\[
\Image(S)=\{k[e_1]\},
\qquad
\Ker(S)=r^{-1}\mathbb P\langle e_2,e_3\rangle.
\]
These are respectively the point component of the limiting flag of $(g_n)$ and the line component of the limiting flag of $(g_n^{-1})$.

Taking all such limits, and conversely all limiting flags, gives
\begin{equation}
A(G)=\pi_1(\Lambda_\theta(G)).
\end{equation}
For the line data, the same argument applied to inverse sequences gives
\begin{equation}
K(G)=\pi_2(\Lambda_\theta(G)).
\end{equation}
Equivalently, under $\wedge^2V\simeq V^*$ the two simple-root gaps are interchanged,
\[
\delta_1(\wedge^2g)=\delta_2(g),
\qquad
\delta_2(\wedge^2g)=\delta_1(g),
\]
which is the dual form of the same statement. Combining Lemma~3.1 with~(3)--(4) yields
\begin{equation}
\widehat\Lambda_{\CoG}(\check G)=\pi_1(\Lambda_\theta(G)),
\qquad
\widehat\Lambda_{\CoG}(G)=\pi_2(\Lambda_\theta(G)).
\end{equation}
Every pseudo--projective boundary map now has a line kernel. Equations~(2) and~(4) therefore give
\begin{equation}
\Lambda_{\Myr}(G)
=\bigcup_{S\in\BSPG}\Ker(S)
=\bigcup_{[\ell]\in\widehat\Lambda_{\CoG}(G)}\ell
=\bigcup_{(p,\ell)\in\Lambda_\theta(G)}\ell
=\Thinc(\Lambda_\theta(G)).
\end{equation}
Consequently
\begin{equation}
\Eq(G)=\Ptwo\setminus\Thinc(\Lambda_\theta(G)).
\end{equation}
We record the short source--target argument for proper discontinuity. If a compact set $K\subset\Eq(G)$ met infinitely many of its translates, choose pairwise distinct $g_n\in G$ and $x_n,y_n\in K$ with $y_n=g_nx_n$. After passing to a subsequence, $x_n\to x$, $y_n\to y$, and the full-flag case gives source and target flags $(p^+,\ell^+)$ and $(p^-,\ell^-)$ such that
\[
g_n\longrightarrow p^-
\quad\text{locally uniformly on }\Ptwo\setminus\ell^+.
\]
Since $\ell^+\subset\Thinc(\Lambda_\theta(G))$ and $K\subset\Eq(G)$, one has $x\notin\ell^+$, hence $y_n\to p^-$. But $p^-\in\ell^-\subset\Thinc(\Lambda_\theta(G))$, contradicting $y\in K\subset\Eq(G)$. Thus the action is properly discontinuous. This is the concrete type-$A_2$ instance of the regular Schubert-thickening mechanism of Kapovich--Leeb--Porti~\cite[Theorem~6.19]{KLP18}.

\subsection{Kulkarni consequence under three lines}
Assume in addition that $\Lambda_{\Kul}(G)$ contains at least three projective lines in general position. The equality of the equicontinuity and Kulkarni regions is already known in this regime:
\[
\Omega_{\Kul}(G)=\Eq(G),
\qquad
\Lambda_{\Kul}(G)=\Lambda_{\Myr}(G);
\]
see~\cite[Theorem~2.6]{BCN11}. Combining this with~(6) gives
\[
\Lambda_{\Kul}(G)=\Lambda_{\Myr}(G)
=\bigcup_{[\ell]\in\widehat\Lambda_{\CoG}(G)}\ell
=\Thinc(\Lambda_\theta(G)),
\]
\[
\Omega_{\Kul}(G)=\Eq(G)=\Ptwo\setminus\Thinc(\Lambda_\theta(G)).
\]
This is the last assertion of Theorem~2.

\subsection{Boundary parametrizations}
The compatibility established in the previous section shows that, for a $\theta$-divergent sequence, the pseudo--projective, pointwise Furstenberg-type, and wonderful compactifications encode the same asymptotic flag data. The present section identifies the projective limit-set constructions with the two natural projections of that flag boundary.

\begin{remark}
If in a more restrictive situation one has a parametrization
\[
\xi:X\longrightarrow\Fl(\Ptwo),
\qquad
\Lambda_\theta(G)=\xi(X),
\]
then, writing $\xi=(\xi^1,\xi^2)$, Theorem~2 gives formally
\[
\widehat\Lambda_{\CoG}(\check G)=\xi^1(X),
\qquad
\widehat\Lambda_{\CoG}(G)=\xi^2(X),
\]
and
\[
\Lambda_{\Myr}(G)=\bigcup_{x\in X}\xi^2(x),
\qquad
\Eq(G)=\Ptwo\setminus\bigcup_{x\in X}\xi^2(x).
\]
This is a specialization of Theorem~2 rather than an additional result, and it is not used elsewhere in the paper.
\end{remark}

\begin{corollary}
Let $G\subset\PSL(3,\C)$ be a complex Kleinian group and assume that $\Lambda_{\Kul}(G)$ contains at least three projective lines in general position. Then $\Omega_{\Kul}(G)=\Eq(G)$ and
\[
\overline{\bigcup_{\substack{g\in G\text{ loxodromic}\\ \ell_g^-\text{ is defined}}}\ell_g^-}
\subseteq\Lambda_{\Kul}(G)\subset\Ptwo,
\]
where $\ell_g^-$ denotes the repelling projective line when the repelling object of $g$ is a line, equivalently when $\wedge^2g^{-1}$ is proximal.
\end{corollary}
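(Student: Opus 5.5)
The first assertion, $\Omega_{\Kul}(G)=\Eq(G)$, is exactly \cite[Theorem~2.6]{BCN11}, already invoked in the subsection on the Kulkarni consequence under three lines. So the content to prove is the inclusion of the closure of repelling lines into $\Lambda_{\Kul}(G)$. Since $\Lambda_{\Kul}(G)$ is closed, it suffices to show that $\ell_g^-\subset\Lambda_{\Kul}(G)$ for each loxodromic $g\in G$ whose repelling object is a line.

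We then use $\Lambda_{\Kul}(G)=\Lambda_{\Myr}(G)=\bigcup_{S\in\BSPG}\Ker(S)$, which follows from the previous paragraph together with equation~(2). This reduces the problem to producing an element $S\in\BSPG$ with $\Ker(S)\supseteq\ell_g^-$. We take $S$ to be a pseudo--projective limit of the powers $g^n$, passing to a subsequence. The elements $g^n$ are pairwise distinct because a loxodromic element has infinite order, so $S\in\BSPG$.

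To compute $\Ker(S)$, choose a lift $\widetilde g$ and use the hypothesis that $\wedge^2 g^{-1}$ is proximal. Equivalently, the two smallest moduli of eigenvalues of $\widetilde g$, say $|t_2|\ge|t_3|$, satisfy $|t_1|>|t_2|$ strictly, and $\ell_g^-$ is the projectivization of the sum of the generalized eigenspaces for $t_2,t_3$. This is the $\widetilde g$-invariant complementary plane to the dominant eigenline.

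Put $\widetilde g$ in Jordan form and normalize $\widetilde g^n$ by $t_1^{-n}$. Because $t_1$ is a simple eigenvalue with strictly larger modulus, the normalized powers converge to the rank-one projector onto the $t_1$-eigenline along the invariant plane. Polynomial Jordan factors in the lower block are dominated by $(|t_2|/|t_1|)^n$, so they do not affect the limit. Hence $S$ has rank one with $\Ker(S)=\ell_g^-$, which gives $\ell_g^-\subset\Lambda_{\Myr}(G)=\Lambda_{\Kul}(G)$.

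The main point to check is the identification of $\ell_g^-$, defined via the dominant fixed point of $\wedge^2g^{-1}$, with this spectral complementary plane. Under $\mathbb P(\wedge^2\C^3)\simeq\Pdual$, the dominant eigenvector of $\wedge^2\widetilde g^{-1}$ is $u_2\wedge u_3$, where $u_2,u_3$ span the generalized eigenspaces of $\widetilde g$ for the eigenvalues of smaller modulus. This is because the eigenvalues of $\wedge^2\widetilde g^{-1}$ are the products $t_i^{-1}t_j^{-1}$, and the largest in modulus is $t_2^{-1}t_3^{-1}$, which is simple exactly when $|t_1|>|t_2|$.

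Alternatively, one could pass to the dual group $\check G$, apply the attracting-point inclusion $\Lambda_{\CoG}\subseteq A$ from Lemma~3.1, and conclude via $K(G)$. The direct Jordan computation above is simpler, so we use that.
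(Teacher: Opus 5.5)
Your proposal is correct and follows essentially the same route as the paper. You invoke \cite[Theorem~2.6]{BCN11} to get $\Omega_{\Kul}(G)=\Eq(G)$ and $\Lambda_{\Kul}(G)=\Lambda_{\Myr}(G)$, realize $\ell_g^-$ as the kernel of the rank-one pseudo--projective limit of the normalized powers $g^n$, apply equation~(2), and take closures. Your explicit Jordan-form computation, and your check that the dominant eigenvector of $\wedge^2\widetilde g^{-1}$ is $u_2\wedge u_3$, spell out what the paper obtains by appealing to the loxodromic classification in Remark~1.3.
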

\begin{proof}
By~\cite[Theorem~2.6]{BCN11}, the three-line hypothesis gives
\[
\Omega_{\Kul}(G)=\Eq(G),
\qquad
\Lambda_{\Kul}(G)=\Lambda_{\Myr}(G).
\]
Let $g\in G$ be loxodromic with repelling line $\ell_g^-$. By the loxodromic classification recalled in Remark~1.3, the case in which the repelling object is a line is precisely the case in which the forward dynamics has an attracting point and $g$ is proximal. Hence, after normalizing lifts by the dominant eigenvalue, the powers $g^n$ converge pseudo--projectively to a rank-one map $S\in\BSPG$ whose exceptional hyperplane is exactly the repelling line:
\[
\Ker(S)=\ell_g^-.
\]
Equivalently, Lemma~3.1 identifies the dual point $[\ell_g^-]$ with an element of the extended Conze--Guivarc'h boundary coming from this rank-one kernel. Equation~(2) therefore gives
\[
\ell_g^-\subset\Lambda_{\Myr}(G).
\]
Using only the cited three-line equality,
\[
\ell_g^-\subset\Lambda_{\Myr}(G)=\Lambda_{\Kul}(G).
\]
Since $\Lambda_{\Kul}(G)$ is closed, taking the union over all such loxodromic elements and then the closure proves the claim. No part of Theorem~2 is used in this argument.
\end{proof}

\begin{remark}
The closure in the preceding statement is essential. Without an additional theorem identifying the ordinary and extended Conze--Guivarc'h limit sets, the argument above gives the inclusion needed here but does not assert that every line in $\Lambda_{\Kul}(G)$ is generated by the attracting direction of a proximal element of the dual action. No such equality is used below.
\end{remark}

This corollary provides a dynamical source of line configurations in $\Lambda_{\Kul}(G)$: repelling lines of loxodromic elements, together with their accumulation lines, lie in the Kulkarni limit set. These configurations will be used to study the Kobayashi geometry of the ordinary set. The corollary also yields $\Lambda_{\CoG}(G)\subset\widehat\Lambda_{\CoG}(\check G)$; these limit sets play a role for discrete subgroups of $\PSL(3,\C)$ analogous to that of the first and second Julia sets in higher-dimensional complex dynamics; compare~\cite{FS94,FS95}.

\section{Geometry of the ordinary set}\label{sec:geometry-ordinary}
The preceding description of the Kulkarni limit set reduces the geometry of the ordinary set to that of complements of projective line configurations. We use standard facts about the Kobayashi distance, including monotonicity, completeness, the induced topology, and hyperbolic embedding, as in~\cite{Kob98}. The analytic input is Green's theorem~\cite{Green77}.

For a discrete subgroup $G<\PSL(3,\C)$, let $\nu(G)$ denote the supremum of the cardinalities of families of projective lines contained in $\Lambda_{\Kul}(G)$ and in general position. Thus $\nu(G)=\infty$ means that arbitrarily large finite families occur.

We begin with the local criterion. It is useful independently of group actions.

\begin{theorem}\label{thm:local-five-line}
Let $\mathcal C$ be a family of projective lines in $\Ptwo$ such that $\bigcup_{\ell\in\mathcal C}\ell$ is closed, and set
\[
\Omega:=\Ptwo\setminus\bigcup_{\ell\in\mathcal C}\ell.
\]
Assume that $\mathcal C$ contains five lines in general position and that, for every $p\in\partial\Omega$, there exist five lines
\[
\ell_0,\ell_1,\ell_2,\ell_3,\ell_4\in\mathcal C
\]
in general position with $p\in\ell_0$. Then every connected component $\Omega_0$ of $\Omega$ is complete Kobayashi hyperbolic and Kobayashi hyperbolically embedded in $\Ptwo$.
\end{theorem}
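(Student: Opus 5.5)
The plan is to reduce everything to Green's theorem~\cite{Green77}: the complement $X:=\Ptwo\setminus(\ell_0\cup\dots\cup\ell_4)$ of five lines in general position is Kobayashi hyperbolic and hyperbolically embedded in $\Ptwo$. Moreover, Kobayashi's extension/hyperbolic-embedding theory gives the standard boundary estimate for such an $X$: if $p_n,q_n\in X$, $p_n\to p\in\partial X$ and $q_n\to q$ with $q\neq p$ (or $q_n$ staying in a compact subset of $X$), then $d_X(p_n,q_n)\to\infty$. The second claim follows from the first. Hyperbolic embeddedness says that distinct boundary points of $\overline X$ are separated by the Kobayashi distance. Combined with the fact that $d_X$ induces the usual topology on $X$, this shows that a point approaching $\partial X$ cannot stay at bounded distance from a fixed interior point. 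In particular, the local picture near a point of $\ell_0$ is controlled by the five-line complement.

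First, fix a component $\Omega_0$ and one five-line family $\mathcal L_*\subset\mathcal C$ in general position, given by the hypothesis. Then $\Omega_0\subset X_*:=\Ptwo\setminus\bigcup\mathcal L_*$. By monotonicity of the Kobayashi pseudodistance under inclusion, $d_{\Omega_0}\ge d_{X_*}|_{\Omega_0}$, so $\Omega_0$ is Kobayashi hyperbolic. For hyperbolic embedding, take $x_n\to x$ and $y_n\to y$ in $\overline{\Omega_0}$ with $x\ne y$. The domination $d_{\Omega_0}\ge d_{X_*}$, together with the fact that $X_*$ is hyperbolically embedded in $\Ptwo$, gives $\liminf d_{\Omega_0}(x_n,y_n)\ge\liminf d_{X_*}(x_n,y_n)>0$. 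This inequality is legitimate because $\overline{\Omega_0}\subset\overline{X_*}=\Ptwo$. Hence $\Omega_0$ is hyperbolically embedded in $\Ptwo$.

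The completeness step is where the pointwise hypothesis enters. Let $(x_n)$ be $d_{\Omega_0}$-Cauchy. By compactness of $\Ptwo$, pass to a subsequence with $x_n\to p\in\overline{\Omega_0}$. If $p\in\Omega_0$, then since $d_{\Omega_0}$ induces the manifold topology, the Cauchy sequence converges to $p$, and we are done. Suppose instead that $p\in\partial\Omega_0\subset\partial\Omega$; here closedness of $\bigcup\mathcal C$ is used to ensure that $p$ lies on some line of $\mathcal C$. Choose five lines $\ell_0,\dots,\ell_4\in\mathcal C$ in general position with $p\in\ell_0$, and let $X$ be their complement. Fix $x_1$; then $d_{\Omega_0}(x_1,x_n)\le C$ for all $n$, hence $d_X(x_1,x_n)\le C$. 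But $x_n\to p\in\ell_0\subset\partial X$, while $x_1$ is a fixed point of $X$. Since $X$ is hyperbolically embedded, and hence complete-hyperbolic behaviour holds at interior reference points, $d_X(x_1,x_n)\to\infty$. This is a contradiction. So bounded sets of $\Omega_0$ are relatively compact in $\Omega_0$, and $\Omega_0$ is complete.

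The main obstacle is justifying $d_X(x_1,x_n)\to\infty$ as $x_n\to p\in\partial X$. Hyperbolic embeddedness alone controls pairs of sequences converging to distinct boundary points, not a fixed interior point against a boundary sequence. I would try first to invoke the known fact that Green's five-line complement is in fact complete hyperbolic (Kiernan/Green). Failing that, I would argue directly: if $d_X(x_1,x_n)\le C$, hyperbolic embedding gives a normal-family bound on holomorphic discs, so chains of discs from $x_1$ to $x_n$ of total length $\le C$ stay in a fixed compact subset of $X$. This is the standard proof that a hyperbolically embedded $X$ in a compact $Y$ with $\partial X$ a hypersurface has $d_X$ exhausting toward the boundary. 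It contradicts $x_n\to p\in\ell_0$. Everything else is monotonicity.
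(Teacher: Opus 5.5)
Once completeness of the five-line complement is taken as part of Green's theorem, your argument is correct. That is how the paper uses~\cite{Green77}: $D$ and $D_p$ are asserted to be complete hyperbolic and hyperbolically embedded.

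\textbf{Completeness.} Your completeness step is then the paper's argument. Localize at $p\in\partial\Omega_0$ with five lines of $\mathcal C$ through $p$, use $d_{\Omega_0}\ge d_X$, and contradict ambient convergence to $p\notin X$.

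The justification in your first paragraph is wrong, however, and should be deleted.
\begin{itemize}
\item The estimate ``$d_X(p_n,q_n)\to\infty$ whenever $p_n\to p\in\partial X$ and $q_n\to q\ne p$'' is false already for a five-line complement. Near a point of $\ell_0$ off $\ell_1,\dots,\ell_4$, choose affine coordinates with $\ell_0=\{z=0\}$ and $\{0<|z|<\varepsilon,\ |w|<1\}\subset X$. The vertical discs $w\mapsto(\varepsilon/n,w)$ then give $d_X\bigl((\varepsilon/n,0),(\varepsilon/n,1/2)\bigr)\le d_\Delta(0,1/2)$. Yet the two sequences converge to distinct points of $\ell_0$.
\item Hyperbolic embedding together with agreement of topologies does not force $d_X(x_1,x_n)\to\infty$ as $x_n\to\partial X$. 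The bounded domain $B\setminus\{0\}\subset\C^2$ is hyperbolically embedded in $\Ptwo$, since it lies in a larger ball. It is not complete hyperbolic, because it is not pseudoconvex.
\item Your fallback disc-chain sketch works only with the local completeness near $\partial X$ supplied by the normal-crossings structure (Kiernan's criterion). You mention this only at the very end.
\item The paper's formulation is slightly more economical than your ``bounded distance'' version. In the paper, the subsequence is $d_{D_p}$-Cauchy and hence converges in $D_p$. Your version additionally needs the Hopf--Rinow-type fact that closed Kobayashi balls in a complete hyperbolic manifold are compact.
\end{itemize}

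\textbf{Hyperbolic embedding.} Here you take a genuinely simpler route than the paper.
\begin{itemize}
\item The paper separates pairs of limit points case by case. When one of them is a boundary point $p$, it invokes the complement $D_p$ of five lines through $p$.
\item You observe instead that hyperbolic embedding is inherited by every subdomain of a single hyperbolically embedded $X_*$, because $d_{\Omega_0}\ge d_{X_*}$ on $\Omega_0$. This is correct.
\item Your route shows that the pointwise five-line hypothesis is needed only for completeness. Hyperbolicity and hyperbolic embedding hold for every component of the complement of any closed line family containing five lines in general position.
\end{itemize}
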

\begin{proof}
Choose five lines of $\mathcal C$ in general position and let $D$ be their complement. By Green's theorem~\cite{Green77}, $D$ is complete Kobayashi hyperbolic and Kobayashi hyperbolically embedded in $\Ptwo$. Since $\Omega_0\subset D$, monotonicity gives
\[
k_{\Omega_0}\ge k_D,
\]
so $\Omega_0$ is Kobayashi hyperbolic.

Let $(z_n)$ be a $k_{\Omega_0}$-Cauchy sequence. By compactness of $\Ptwo$, a subsequence converges in the ambient topology to some $p\in\overline{\Omega_0}$. If $p\in\partial\Omega_0$, choose five lines $\ell_0,\ldots,\ell_4\in\mathcal C$ in general position with $p\in\ell_0$, and put
\[
D_p:=\Ptwo\setminus(\ell_0\cup\cdots\cup\ell_4).
\]
Again Green's theorem implies that $D_p$ is complete Kobayashi hyperbolic. Since $\Omega_0\subset D_p$ and $k_{D_p}\le k_{\Omega_0}$, the chosen subsequence is $k_{D_p}$-Cauchy and hence converges in $D_p$. The Kobayashi topology of the hyperbolic manifold $D_p$ agrees with its usual topology, contradicting its ambient convergence to $p\notin D_p$. Therefore $p\in\Omega_0$. Since $\Omega_0$ is Kobayashi hyperbolic, its Kobayashi topology is the usual topology; the Cauchy property then implies that the whole sequence converges to $p$. Thus $\Omega_0$ is complete.

Finally, let $p\ne q$ belong to $\overline{\Omega_0}$. If both are in $\Omega_0$, positivity and continuity of $k_{\Omega_0}$ give the required local separation. Otherwise, after interchanging $p$ and $q$ if necessary, assume $p\in\partial\Omega_0$ and choose $D_p$ as above. Green's hyperbolic-embedding conclusion gives neighborhoods $U_p,U_q\subset\Ptwo$ and $\varepsilon>0$ such that
\[
k_{D_p}(x,y)\ge\varepsilon
\]
for $x\in D_p\cap U_p$ and $y\in D_p\cap U_q$. Since $\Omega_0\subset D_p$, monotonicity yields the same lower bound for $k_{\Omega_0}$. Hence $\Omega_0$ is Kobayashi hyperbolically embedded in $\Ptwo$.
\end{proof}

The elementary incidence observation which was implicit in the earlier form of the argument is the following.

\begin{lemma}\label{lem:prescribed-line-combinatorial}
Let $\mathcal C$ be a family of projective lines in $\Ptwo$ with $\nu(\mathcal C)=\infty$. Then every prescribed line $\ell_0\in\mathcal C$ can be completed to arbitrarily large finite families of lines in $\mathcal C$ in general position.
\end{lemma}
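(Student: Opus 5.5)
The proof is a pigeonhole argument over the points of $\ell_0$. Fix $\ell_0\in\mathcal C$ and a target size $N$. Since $\nu(\mathcal C)=\infty$, choose a family $\mathcal F=\{m_1,\dots,m_M\}\subset\mathcal C$ of lines in general position, with $M$ large compared with $N$. Lines in general position means: pairwise distinct, and no three concurrent.

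\emph{Step 1: how many lines of $\mathcal F$ are bad for $\ell_0$.} If $\ell_0\in\mathcal F$ we are already done, so assume $\ell_0\notin\mathcal F$. The lines of $\mathcal F$ meet $\ell_0$ in points, and no three of them are concurrent. Hence each point of $\ell_0$ lies on at most two lines of $\mathcal F$.

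\emph{Step 2: greedy selection.} Build $\mathcal F'\subset\mathcal F$ one line at a time, so that $\{\ell_0\}\cup\mathcal F'$ stays in general position. Adding a line $m\in\mathcal F$ creates a triple concurrency with $\ell_0$ only if $m$ passes through one of the points $\ell_0\cap m'$ with $m'$ already chosen. Triples not involving $\ell_0$ cannot be concurrent, since $\mathcal F'\subset\mathcal F$.

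Each point $\ell_0\cap m'$ lies on at most one further line of $\mathcal F$, by Step 1. So each chosen line excludes at most one other line of $\mathcal F$. Consequently at least $\lceil M/2\rceil$ lines can be selected. Equivalently, the incidence graph pairing lines of $\mathcal F$ that meet $\ell_0$ at the same point is a matching, and we take one line from each pair.

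\emph{Step 3: conclusion.} Taking $M\ge 2N$ gives $\{\ell_0\}\cup\mathcal F'$ in general position with more than $N$ lines, all in $\mathcal C$. Since $N$ is arbitrary, the lemma follows.

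The only point needing care is Step 1, which rests on the convention that general position means no three lines concurrent. Everything else is counting. I expect no real obstacle beyond stating that convention explicitly.
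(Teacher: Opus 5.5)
Your proof is correct and is essentially the paper's argument. Both use that no three lines of a general-position family $\mathcal F$ are concurrent, so each point of $\ell_0$ lies on at most two lines of $\mathcal F$; keeping one line from each such pair retains at least half of $\mathcal F$, and these lines together with $\ell_0$ remain in general position. The only cosmetic difference is that the paper starts with $2N+1$ lines and discards $\ell_0$ if it occurs, whereas you treat the case $\ell_0\in\mathcal F$ separately as trivial.
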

\begin{proof}
Fix $N\ge1$ and choose $2N+1$ lines of $\mathcal C$ in general position. After discarding $\ell_0$ if it occurs among them, and then discarding further lines if necessary, we obtain a family $\mathcal A\subset\mathcal C\setminus\{\ell_0\}$ in general position with $|\mathcal A|=2N$. For each $p\in\ell_0$, at most two lines of $\mathcal A$ pass through $p$, because no three lines of $\mathcal A$ are concurrent. Partition $\mathcal A$ according to the intersection point with $\ell_0$ and retain one line from each nonempty class. The resulting subfamily $\mathcal A'\subset\mathcal A$ has at least $N$ elements, no two of which meet on $\ell_0$. Since $\mathcal A$ was already in general position, the family
\[
\{\ell_0\}\cup\mathcal A'
\]
is in general position. As $N$ is arbitrary, the conclusion follows.
\end{proof}

We can now recover the global four-line statement in the form used in the introduction.

\begin{theorem}\label{thm:global-four-lines}
Let $G<\PSL(3,\C)$ be a discrete subgroup. If $\nu(G)\ge4$, then every connected component $\Omega_0$ of $\Omega_{\Kul}(G)$ is complete Kobayashi hyperbolic and Kobayashi hyperbolically embedded in $\Ptwo$. Moreover, $\Omega_0$ is taut, pseudoconvex, Stein, and a domain of holomorphy.
\end{theorem}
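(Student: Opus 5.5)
The plan is to split according to the value of $\nu(G)$, as the introduction indicates, using the line-counting theorem of~\cite{BCN16}. Since $\nu(G)\ge4$, that theorem leaves two possibilities: either $\nu(G)=4$ or $\nu(G)=\infty$. The point is that no finite value $\nu(G)\ge5$ can occur.

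\emph{Case $\nu(G)=4$.} Here I will invoke the four-line classification of~\cite{BCNfour11}. In this situation $G$ is, up to finite index, of hyperbolic toral type. Moreover $\Lambda_{\Kul}(G)$ is a union of lines through two points, for instance the two pencils $\{z_1 = t z_0\}$ and $\{z_2=sz_0\}$ with $t,s\in\mathbb R\cup\{\infty\}$, in suitable coordinates. Each component of $\Omega_{\Kul}(G)$ is then projectively equivalent to $\mathbb H\times\mathbb H$, or to a product of half-planes, embedded in an affine chart. Completeness and tautness of the bidisc-type domain $\mathbb H\times\mathbb H$ are standard. For hyperbolic embedding in $\Ptwo$ I will check the boundary points at infinity as well. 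The cleanest route is to observe that $\mathbb H\times\mathbb H$ already lies in the complement of five lines in general position chosen inside its boundary configuration, namely four pencil lines plus the line at infinity; since the line at infinity passes through both vertices, this choice must be adjusted, and otherwise I will argue directly. If that is awkward, I will use instead the criterion that a relatively compact complete-hyperbolic domain with a bounded-type local model is hyperbolically embedded. This argument goes via a Möbius change of each factor that sends $\mathbb H$ to the disc, giving the bidisc, which is bounded in an affine chart and whose closure avoids the relevant line. Hyperbolic embedding for bounded domains in $\C^2$ of this product type follows from the bidisc estimate. I expect this bookkeeping at the points of the boundary lying on the line at infinity to be the main nuisance of this case.

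\emph{Case $\nu(G)=\infty$.} Put $\mathcal C$ equal to the family of all projective lines contained in $\Lambda_{\Kul}(G)$. By the line-counting theorem~\cite{BCN16}, or by its consequence recalled in the introduction, in this regime $\Lambda_{\Kul}(G)$ is exactly the union of the lines in $\mathcal C$, and this union is closed since it equals $\Lambda_{\Kul}(G)$. Every boundary point $p\in\partial\Omega_{\Kul}(G)$ lies in $\Lambda_{\Kul}(G)$, hence on some line $\ell_0\in\mathcal C$. Lemma~\ref{lem:prescribed-line-combinatorial} then completes $\ell_0$ to five lines of $\mathcal C$ in general position. All hypotheses of Theorem~\ref{thm:local-five-line} are therefore satisfied, and every component is complete Kobayashi hyperbolic and hyperbolically embedded. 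The step I regard as the main obstacle is making sure that the cited result really gives ``$\Lambda_{\Kul}(G)$ is a union of lines'' when $\nu(G)=\infty$, so that no isolated non-line point of the limit set lies on $\partial\Omega_0$. If that is not available directly, I will use the three-line result~\cite[Theorem~2.6]{BCN11}, which gives $\Lambda_{\Kul}=\Lambda_{\Myr}$, together with~(2). Then $\Lambda_{\Kul}(G)$ is a union of kernels of boundary maps. Any point kernel $\{p\}$ must then be shown to lie on a line kernel, using the trichotomy of Theorem~1 applied to inverse sequences: the inverse of a point-to-line degeneration has $\ell^-$ as its kernel.

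\emph{Remaining properties.} These are formal consequences. Complete hyperbolicity implies tautness by Kiernan's theorem~\cite{Kob98}. A taut domain in $\Ptwo$ is pseudoconvex. Each component misses at least one line of $\Lambda_{\Kul}(G)$, so it lies in an affine chart $\C^2$. A pseudoconvex domain in $\C^2$ is a domain of holomorphy by the solution of the Levi problem, and hence it is Stein.
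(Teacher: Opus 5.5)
Your decomposition is the paper's. The case $\nu(G)=\infty$ and the final pseudoconvex/Stein chain go through exactly as in the paper, which cites \cite[Proposition~5.15]{BCN16} for $\Lambda_{\Kul}(G)$ being a union of lines and then applies Lemma~\ref{lem:prescribed-line-combinatorial} and Theorem~\ref{thm:local-five-line}. The genuine gap is hyperbolic embedding when $\nu(G)=4$.

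\textbf{Why the bidisc route fails.} Hyperbolic embedding is a property of the pair $(\Omega_0,\Ptwo)$, not of the biholomorphism class of $\Omega_0$. The factorwise Cayley map $\mathbb H\times\mathbb H\to\Delta^2$ is not induced by an element of $\PSL(3,\C)$ and does not extend continuously to the closure of $\Omega_0$ in $\Ptwo$. That closure contains the whole line at infinity. For instance, for real $a$ one has $[at+i:t+i:1]\to[a:1:0]$, and non-real $a$ is handled similarly. Consequently:
\begin{itemize}
\item no projective image of $\Omega_0$ is bounded in any affine chart;
\item for distinct nonzero real $a\ne a'$, sequences tending to $[a:1:0]$ and to $[a':1:0]$ are both sent by the Cayley map to sequences converging to $(1,1)\in\partial\Delta^2$, so boundedness of the bidisc gives no separation there.
\end{itemize}
Your five-line attempt inside $\Lambda_{\Kul}(G)$ cannot work either, since $\nu(G)=4$ forbids five lines in general position there. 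You noticed this but left the adjustment open. The proposed ``bounded-type local model'' criterion is not a standard result and does not address these points at infinity.

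\textbf{How to close it.} The paper argues directly. If $k_{\mathbb H\times\mathbb H}(z_n,w_n)\to0$, the product formula and the Poincar\'e distance give $\operatorname{Im}w_{j,n}/\operatorname{Im}z_{j,n}\to1$ and $|z_{j,n}-w_{j,n}|/\operatorname{Im}z_{j,n}\to0$ for $j=1,2$. Hence $\|Z_n-W_n\|/\|Z_n\|\to0$ for the representatives $Z_n=(z_{1,n},z_{2,n},1)$ and $W_n=(w_{1,n},w_{2,n},1)$, so both sequences have the same limit in $\Ptwo$.

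Alternatively, your five-line idea can be repaired by dropping the requirement that the lines lie in $\Lambda_{\Kul}(G)$. For generic $c_1,\dots,c_5$ with negative imaginary part, the lines $z_1=c_1$, $z_2=c_2$, $z_1+z_2=c_3$, $z_1+2z_2=c_4$, $2z_1+z_2=c_5$ are in general position. They miss $\mathbb H\times\mathbb H$, because $\operatorname{Im}(az_1+bz_2)>0$ there whenever $a,b\ge0$ are not both zero. Green's theorem makes their complement hyperbolically embedded in $\Ptwo$. Monotonicity of the Kobayashi distance then passes hyperbolic embedding to every subdomain, including $\Omega_0$. Completeness still comes from the product structure.

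\textbf{Your fallback for $\nu(G)=\infty$.} It is incorrect as stated. In the point-to-line case, $p^+=[(\kappa^-)^{-1}e_3]$ and $\ell^-=\mathbb P(\kappa^+\langle e_1,e_2\rangle)$ depend on independent unitary limits. For example, with $\kappa^\pm=I$ one gets $p^+\notin\ell^-$. So the inverse's kernel line need not pass through $p^+$. Citing \cite{BCN16}, as the paper does, avoids this. A correct self-contained replacement is:
\begin{itemize}
\item take any line $L\subset\Lambda_{\Kul}(G)$ with $L\ne\ell^-$;
\item note that $g_n^{-1}\to p^+$ on $L\setminus\ell^-$;
\item so any limit of the lines $g_n^{-1}(L)$ is a line of $\Lambda_{\Kul}(G)$ through $p^+$.
\end{itemize}
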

\begin{proof}
There are two cases.

Assume first that $\nu(G)=4$. By the four-line classification~\cite{BCNfour11}, a finite-index subgroup is hyperbolic toral and the Kulkarni ordinary region is projectively equivalent to four disjoint copies of $\mathbb H\times\mathbb H$. Hence every component is complete Kobayashi hyperbolic. The product model also gives hyperbolic embedding. Indeed, after a projective change of coordinates, write a component as $\mathbb H\times\mathbb H\subset\C^2\subset\Ptwo$. If hyperbolic embedding failed, there would be sequences $z_n,w_n$ in the component converging in $\Ptwo$ to distinct points and satisfying $k_{\mathbb H\times\mathbb H}(z_n,w_n)\to0$. The product formula and the explicit Poincar\'e distance on $\mathbb H$ then give, for $j=1,2$,
\[
\frac{\operatorname{Im}w_{j,n}}{\operatorname{Im}z_{j,n}}\longrightarrow1,
\qquad
\frac{|z_{j,n}-w_{j,n}|}{\operatorname{Im}z_{j,n}}\longrightarrow0.
\]
For the homogeneous representatives $Z_n=(z_{1,n},z_{2,n},1)$ and $W_n=(w_{1,n},w_{2,n},1)$ it follows that
\[
\frac{\|Z_n-W_n\|}{\|Z_n\|}\longrightarrow0.
\]
Thus $z_n$ and $w_n$ have the same projective limit, a contradiction.

Assume now that $\nu(G)>4$. By~\cite[Theorem~1.2]{BCN16}, one has $\nu(G)=\infty$; moreover,~\cite[Proposition~5.15]{BCN16} shows that $\Lambda_{\Kul}(G)=\Ptwo\setminus\Omega_{\Kul}(G)$ is the union of a perfect set of projective lines. Hence, if $\mathcal C$ denotes the family of all projective lines contained in $\Lambda_{\Kul}(G)$, then
\[
\Lambda_{\Kul}(G)=\bigcup_{\ell\in\mathcal C}\ell.
\]
The union is closed because $\Lambda_{\Kul}(G)$ is closed. For each $p\in\Lambda_{\Kul}(G)$ choose a line $\ell_0\in\mathcal C$ through $p$. Lemma~\ref{lem:prescribed-line-combinatorial} completes this prescribed line to five lines of $\mathcal C$ in general position. Theorem~\ref{thm:local-five-line} then gives completeness and hyperbolic embedding.

The remaining analytic properties now follow in the usual way. Choose a projective line $\ell\subset\Lambda_{\Kul}(G)$. Then
\[
\Omega_0\subset\Ptwo\setminus\ell\simeq\C^2.
\]
A complete Kobayashi hyperbolic manifold is taut~\cite{Kob98}; a taut domain in $\C^2$ is pseudoconvex~\cite{Wu67}; and by the Levi problem a pseudoconvex domain in $\C^2$ is a domain of holomorphy, equivalently a Stein domain~\cite{Demailly12}. This proves the remaining assertions.
\end{proof}

\begin{corollary}\label{cor:quotient-complete}
Under the hypotheses of Theorem~\ref{thm:global-four-lines}, let $\Omega_0$ be a component of $\Omega_{\Kul}(G)$ and let $G_0=\operatorname{Stab}_G(\Omega_0)$. If $G_0$ acts freely on $\Omega_0$---in particular, if $G$ is torsion-free---then $\Omega_0/G_0$ is complete Kobayashi hyperbolic.
\end{corollary}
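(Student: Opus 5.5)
The plan is to use the standard fact that Kobayashi completeness passes to unramified coverings and their quotients. By Theorem~\ref{thm:global-four-lines}, $\Omega_0$ is complete Kobayashi hyperbolic. It is also Stein.

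\emph{Covering map.} The group $G_0$ preserves $\Omega_0$. It acts properly discontinuously there, because $\Omega_0\subset\Omega_{\Kul}(G)$ and the action of $G$ on $\Omega_{\Kul}(G)$ is properly discontinuous. If moreover $G_0$ acts freely, then $\Omega_0/G_0$ is a complex manifold (Hausdorff by proper discontinuity), and the projection $\pi:\Omega_0\to\Omega_0/G_0$ is a holomorphic unramified covering. In the torsion-free case, freeness follows because a discrete group acting properly discontinuously has finite stabilizers. A torsion-free group therefore has trivial stabilizers.

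\emph{Transfer of completeness.} I would invoke the covering theorem in~\cite{Kob98}: if $\pi:\widetilde M\to M$ is a holomorphic covering, then $M$ is (complete) Kobayashi hyperbolic if and only if $\widetilde M$ is. Concretely, the proof uses the formula
\[
k_M(x,y)=\inf_{\tilde y\in\pi^{-1}(y)}k_{\widetilde M}(\tilde x,\tilde y)
\]
for a fixed lift $\tilde x$. It rests on lifting chains of holomorphic discs through the covering, since discs are simply connected.

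From this formula, hyperbolicity of $\Omega_0/G_0$ follows: if $k_M(x,y)=0$, the infimum is approached along the discrete fibre. Completeness of $\widetilde M$ together with properness of the action then forces the infimum to be attained, so $k_{\widetilde M}(\tilde x,\tilde y)=0$ for some lift $\tilde y$, and hence $\tilde x=\tilde y$.

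For completeness of the quotient, take a $k_M$-Cauchy sequence. Lift it inductively to a sequence in $\widetilde M$ whose consecutive distances are nearly the downstairs ones; this is possible because the infimum is over lifts and a chain can be lifted step by step. The lift is $k_{\widetilde M}$-Cauchy, so it converges, and its image converges. Equivalently, one can use the criterion that closed $k_M$-balls are compact: a closed ball of radius $r$ about $x$ is the image of the compact closed ball of radius $r$ about $\tilde x$.

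\emph{Main obstacle.} The only delicate point is that the infimum over the fibre behaves well. This needs discreteness of the fibres and the fact that the Kobayashi topology of $\Omega_0$ is the usual one, which holds by hyperbolicity. The latter is already part of Theorem~\ref{thm:global-four-lines}. So I expect the proof to be a short citation of the covering lemma in~\cite{Kob98}, together with the verification that freeness and proper discontinuity make $\pi$ a genuine covering.
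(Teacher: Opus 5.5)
Your proposal is correct and is essentially the paper's argument: the paper also just notes that $\Omega_0\to\Omega_0/G_0$ is a holomorphic covering and cites the invariance of complete Kobayashi hyperbolicity under holomorphic coverings from~\cite{Kob98}, while your check that freeness plus proper discontinuity gives a genuine covering, and your sketch of the covering lemma, fill in what the paper leaves implicit. One small point in that sketch: a Cauchy sequence need not have summable consecutive distances, so you should first pass to a subsequence with $k_M(x_n,x_{n+1})<2^{-n}$ before lifting step by step (your closed-ball argument avoids this issue).
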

\begin{proof}
The projection $\Omega_0\to\Omega_0/G_0$ is a holomorphic covering. Complete Kobayashi hyperbolicity is invariant under holomorphic coverings; see~\cite{Kob98}.
\end{proof}

The two cases have different origins. When $\nu=4$ the geometry comes from the toral classification. When $\nu>4$, the local five-line mechanism explains why Green's theorem applies at every boundary point.

\section{An open Schottky locus}
We now prove Theorem~4 using the usual complex projective Schottky construction; see~\cite{SV03} and~\cite{CNS13}. We choose the initial configuration so that the required conditions are strict; they then persist under small perturbations.

Fix $r\ge2$ and identify
\[
X_r:=\Hom(F_r,\PSL(3,\C))\simeq\PSL(3,\C)^r.
\]
\begin{lemma}\label{lem:stable-regular-schottky}
For every $r\ge2$ there exists a tuple
\[
(g_1,\ldots,g_r)\in\PSL(3,\C)^r
\]
and a neighborhood $\mathcal U_r$ of this tuple, open in the usual complex topology, such that, for every $\rho=(h_1,\ldots,h_r)\in\mathcal U_r$:
\begin{enumerate}[label=(\roman*)]
\item the elements $h_i^{\pm1}$ are proximal and satisfy a projective ping--pong system; hence $\rho$ is injective and $\Gamma_\rho:=\langle h_1,\ldots,h_r\rangle$ is free and discrete;
\item $\Gamma_\rho$ has nonempty equicontinuity region, hence nonempty Kulkarni ordinary set;
\item the first two generators have no common fixed point and no common invariant projective line;
\item four distinguished invariant lines of $h_1^{\pm1},h_2^{\pm1}$ are in general position;
\item $\Gamma_\rho$ is $\theta$-divergent.
\end{enumerate}
\end{lemma}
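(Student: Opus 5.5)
We construct the tuple explicitly from strongly loxodromic (biproximal) elements in general position and then check that every condition is open. Fix the model element $a_\lambda=\operatorname{diag}(e^{\lambda},1,e^{-\lambda})$ with $\lambda>0$ large. Its attracting flag is $([e_1],\mathbb P\langle e_1,e_2\rangle)$ and its repelling flag is $([e_3],\mathbb P\langle e_2,e_3\rangle)$. Choose $2r$ full flags $F_1^\pm,\dots,F_r^\pm$ in $\Fl(\Ptwo)$ that are pairwise transverse; a Zariski-generic choice does this. Choose them also so that the four lines $\ell(F_1^\pm),\ell(F_2^\pm)$ are in general position, and so that no point or line component of $F_1^\pm$ equals one of $F_2^\pm$. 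Put $g_i=k_ia_\lambda k_i^{-1}$, where $k_i\in\PSL(3,\C)$ carries the standard attracting and repelling flags to $F_i^+$ and $F_i^-$; such $k_i$ exists because the two flags are transverse. Then $g_i$ is strongly loxodromic, so $g_i^{\pm1}$ are proximal. Condition (iii) follows because the fixed points and invariant lines of a strongly loxodromic element are exactly the points and lines of its two flags together with the third eigenline, and generic flags avoid coincidences. Condition (iv) holds by construction, once we read the ``distinguished lines'' via Remark~1.3.

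For the ping--pong in (i), take small disjoint closed neighbourhoods $U_i^\pm$ of the lines $\ell(F_i^\pm)$ in $\Ptwo$. Using the full-flag case of Theorem~1 applied to $(g_i^n)$, for $\lambda$ large $g_i^{\pm1}$ maps the complement of $U_i^\mp$ into a small neighbourhood of the attracting point, and that point lies in $U_i^\pm$. This is the usual projective Schottky construction of~\cite{SV03,CNS13}; the neighbourhoods are tubes about lines, and pairwise transversality lets us shrink them so the strict inclusions $g_i^{\pm1}(\Ptwo\setminus \mathrm{int}\,U_i^{\mp})\subset\mathrm{int}\,U_i^{\pm}$ hold. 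A point outside $\bigcup U_i^\pm$ then has an orbit that never returns. This gives freeness, injectivity and discreteness. It also gives a nonempty open set on which the group is equicontinuous, namely the complement of the lines where a fundamental region sits, which is condition (ii). One subtlety: the tubes must be chosen so that the complement of $\bigcup U_i^\pm$ has nonempty interior. This holds because $2r$ thin tubes about lines do not cover $\Ptwo$.

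For (v) we use the Cartan gap estimate for reduced words. For a reduced word $w=s_1\cdots s_m$ in the $g_i^{\pm1}$, transversality of consecutive flags gives uniform constants $c,C>0$ with
\[
\delta_j(w)\ \ge\ \sum_{t=1}^m\bigl(\delta_j(s_t)-C\bigr)\ \ge\ m(\lambda-C),\qquad j=1,2.
\]
We get this by applying the ping--pong in $\Fl(\Ptwo)$, or in $\Ptwo$ and $\Pdual$ simultaneously via $\wedge^2$, which swaps the gaps. The contraction of $w$ on a cone around the attracting point, and of $\wedge^2 w$ on a cone around the attracting line, controls $e^{-\delta_1}$ and $e^{-\delta_2}$ respectively. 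Hence both gaps diverge along any sequence of distinct elements once $\lambda>C$.

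Finally, every condition above is a strict inequality or strict inclusion involving finitely many compact sets and finitely many generators. These are proximality gaps, the strict ping--pong inclusions, transversality and general position (nonvanishing determinants), and the constants $c,C$, which depend continuously on the flags. So each condition persists on a neighbourhood $\mathcal U_r$ of $(g_1,\dots,g_r)$ in the usual topology. The main obstacle is making the gap estimate in (v) uniform over this neighbourhood, not just at the base point. I would handle it by running the ping--pong on both $\Ptwo$ and $\Pdual$ with fixed cones that are slightly enlarged, so that the contraction constants are uniform for perturbed generators.
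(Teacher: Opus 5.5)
\textbf{The gap is in the ping--pong for (i).} Any two projective lines in $\Ptwo$ meet, so $\ell(F_i^+)\cap\ell(F_j^-)\neq\varnothing$ for all $i,j$. Hence the ``small disjoint closed neighbourhoods $U_i^\pm$ of the lines $\ell(F_i^\pm)$'' you take do not exist, however thin the tubes.

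The ping--pong lemma you invoke needs exactly this disjointness. To pass from $s_mx\in U_{s_m}$ to $s_{m-1}s_mx\in U_{s_{m-1}}$, you must know $s_mx\notin U_{s_{m-1}^{-1}}$, and that comes from $U_{s_m}\cap U_{s_{m-1}^{-1}}=\varnothing$. Your individual inclusions $g_i^{\pm1}(\Ptwo\setminus\mathrm{int}\,U_i^\mp)\subset\mathrm{int}\,U_i^\pm$ are true, but they do not close up into a Schottky system. This is also why the Seade--Verjovsky tube construction you cite does not transfer: it uses pairwise disjoint tubes about projective subspaces, which exist in $\mathbb P^{2n+1}_{\C}$ but not in $\Ptwo$.

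The repair, which is what the paper does, is a Tits-type ping--pong. Take the ping--pong sets to be small pairwise disjoint compact neighbourhoods $X_i^\pm$ of the attracting \emph{points} $p(F_i^\pm)$. Use thin tubes about the repelling lines only as excluded regions; these tubes may overlap each other.
\begin{itemize}
\item The repelling line of $g_i^{\epsilon}$ is $\ell(F_i^{-\epsilon})$. So what you need is $p(F_j^\delta)\notin\ell(F_i^{-\epsilon})$ whenever $(j,\delta)\neq(i,-\epsilon)$. This is precisely your pairwise transversality of the $2r$ flags, including $F_i^+$ against $F_i^-$.
\item For $\lambda$ large, $g_i^\epsilon$ then maps $\bigcup_{(j,\delta)\neq(i,-\epsilon)}X_j^\delta$ into $\mathrm{int}\,X_i^\epsilon$.
\item A base point outside all $X_j^\delta$ and off all repelling lines gives freeness, injectivity and discreteness.
\item Uniform contraction of reduced words on the complement of the tubes gives the nonempty equicontinuity region in (ii).
\end{itemize}

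\textbf{With this correction your argument works, and for (v) it takes a genuinely different route.} The paper gets $\theta$-divergence by citing the Kapovich--Leeb--Porti higher-rank Schottky theorem: after enlarging $R_i$ the action is full-chamber Morse, hence uniformly regular with quasi-isometric orbit map. It gets persistence from openness of Morse actions in the representation space.

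Your route is direct:
\begin{itemize}
\item bound the Lipschitz constant of a reduced word on a fixed open set by $\prod_t Ce^{-\delta_1(s_t)}$;
\item convert this into a lower bound on $\delta_1(w)$, since a projective map contracting an open set of definite size by $\kappa$ has $\sigma_2/\sigma_1\lesssim\kappa$;
\item repeat in $\Pdual$ using $\delta_1(\wedge^2g)=\delta_2(g)$.
\end{itemize}
Your ``cones around attracting points'' are the point neighbourhoods above, so this part already fits the corrected ping--pong. Your route is self-contained and gives an explicit gap bound linear in word length. It also makes uniformity over $\mathcal U_r$ a matter of continuity of finitely many contraction constants. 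The paper's route is shorter and yields the stronger Morse conclusion, but it outsources both the regularity and the openness.

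A minor point on (iii): you must also avoid coincidences involving $\ell(F_i^+)\cap\ell(F_i^-)$ and the line through $p(F_i^+)$ and $p(F_i^-)$. Genericity does provide this.
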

\begin{proof}
Choose, for each $i$, an ordered projective eigenframe
\[
([v_{i,1}],[v_{i,2}],[v_{i,3}])
\]
in generic relative position. The eigenframes may be chosen so that the attracting points and repelling lines of the $2r$ letters $g_i^{\pm1}$ satisfy the usual finite transversality conditions for projective ping--pong, while the four distinguished invariant lines of the first two generators are in general position. We also require that $g_1$ and $g_2$ have neither a common eigenpoint nor a common invariant projective line. We choose the eigenframes slightly more carefully so that the attracting and repelling full flags of the $2r$ letters $g_i^{\pm1}$ are pairwise opposite. These are finitely many nonincidence conditions and can be satisfied simultaneously by a generic choice of eigenframes.

In these frames take
\[
g_i=q_i\operatorname{diag}(R_i^2,1,R_i^{-2})q_i^{-1},
\qquad R_i>1.
\]
For $R_i$ sufficiently large, the proximal convergence to the attracting point is uniform on compacta away from the repelling line. Choosing small pairwise disjoint compact neighborhoods of the attracting points, all in a common affine chart and disjoint from the required repelling lines, gives the standard strict ping--pong inclusions. A base point outside these compact sets and repelling lines then determines a Schottky-type system. The usual ping--pong argument shows that the generated group is free and discrete. The same strict inclusions also give a neighborhood of the base point on which all but finitely many reduced words form a normal family; hence $\Eq(\Gamma_\rho)\ne\varnothing$ and therefore $\Omega_{\Kul}(\Gamma_\rho)\ne\varnothing$.

We also need full-chamber regularity. The diagonal elements above translate in a direction contained in the interior of the full Weyl chamber, and the chosen attracting and repelling full flags are pairwise opposite. The higher-rank Schottky construction of Kapovich--Leeb--Porti applies: after increasing the $R_i$ if necessary, the associated free action on the symmetric space is full-chamber Morse~\cite[Section~4.2, especially Theorem~4.13 and Remark~4.16]{KLP25}. In particular it is uniformly full-chamber regular~\cite[Remark~4.2]{KLP25}. Since the orbit map is a quasi-isometric embedding, every sequence of distinct elements has Cartan projection escaping to infinity while remaining uniformly away from both walls. Equivalently, both simple Cartan gaps tend to infinity, so $\Gamma_\rho$ is $\theta$-divergent.

All inequalities used here are strict. On the simple-spectrum locus the eigenpoints and invariant lines vary continuously. The Morse property is also open in the representation space~\cite[Theorem~1.2]{KLP25}. Thus, after shrinking the neighborhood of the initial tuple, the ping--pong inclusions, the four-line general-position condition, the absence of a common fixed point or invariant line for the first two generators, and $\theta$-divergence all persist. This gives the open set $\mathcal U_r$.
\end{proof}

\begin{lemma}\label{lem:strong-irr-open}
Every group $\Gamma_\rho$ with $\rho\in\mathcal U_r$ is strongly irreducible.
\end{lemma}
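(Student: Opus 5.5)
Strong irreducibility means that no finite-index subgroup of $\Gamma_\rho$ preserves a finite union of proper projective subspaces of $\Ptwo$, that is, a finite set of points and lines. The plan is to argue by contradiction using the proximal dynamics of the generators from Lemma~\ref{lem:stable-regular-schottky}(i),(iii),(iv). Suppose $\Gamma'\subset\Gamma_\rho$ has finite index $m$ and preserves a finite union $\mathcal V$ of points and lines. Then $h_1^{k}$ and $h_2^{k}$ lie in $\Gamma'$ for $k=m!$. Passing to the finitely many components of $\mathcal V$ of each dimension, some further power $h_i^{N}$ fixes every point of $\mathcal V$ and preserves every line of $\mathcal V$. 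So it suffices to show that $h_1^{N}$ and $h_2^{N}$ have no common fixed point and no common invariant line.

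The first step controls the fixed points and invariant lines of a power of a single generator. For $\rho\in\mathcal U_r$ each $h_i$ lies on the simple-spectrum locus: this is open, the initial $g_i$ have eigenvalue moduli $R_i^2,1,R_i^{-2}$, and the neighborhood can be shrunk. Moreover, the eigenvalue moduli remain pairwise distinct. Then $h_i^{N}$ also has three distinct eigenvalues, because distinct moduli stay distinct under powers. Hence its fixed points in $\Ptwo$ are exactly the three eigenpoints of $h_i$, and its invariant lines are exactly the three lines spanned by pairs of eigenpoints. The key observation is that powers create no new invariant subspaces because the moduli are distinct, not merely the eigenvalues.

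The second step applies Lemma~\ref{lem:stable-regular-schottky}(iii): $h_1$ and $h_2$ have no common fixed point and no common invariant projective line. By the first step, the same holds for $h_1^{N}$ and $h_2^{N}$. Therefore $\mathcal V$ contains no point and no line, so $\mathcal V$ is empty. This contradicts the assumption that $\mathcal V$ is a nonempty finite union of proper subspaces, and so $\Gamma_\rho$ is strongly irreducible.

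The main obstacle is making sure condition (iii) is really stated for all eigenpoints and all three invariant lines of each generator, not just the distinguished attracting and repelling ones, and that it persists on $\mathcal U_r$. I would handle this in the proof of Lemma~\ref{lem:stable-regular-schottky}. Choose the eigenframes of $g_1$ and $g_2$ in general position relative to each other, so that no eigenpoint of one lies on any eigenline of the other. This stronger transversality gives non-sharing directly. It is a finite set of strict nonincidence conditions, and on the simple-spectrum locus eigenpoints and eigenlines vary continuously, so it remains true after shrinking $\mathcal U_r$. If the lemma as stated only guarantees non-sharing, that is already enough for the argument above, since non-sharing is exactly what the second step needs.
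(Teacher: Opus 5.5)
Your argument is correct and is essentially the paper's proof: an invariant finite configuration for a finite-index subgroup forces suitable powers of $h_1,h_2$ to share a fixed point or an invariant line, powers have the same eigendirections because the eigenvalue moduli are pairwise distinct, and this contradicts Lemma~\ref{lem:stable-regular-schottky}(iii). The paper phrases the same argument via finite orbits in $\Ptwo$ and in the dual plane. One simplification: distinct moduli already follow from (i), since proximality of both $h_i$ and $h_i^{-1}$ forces $|\lambda_1|>|\lambda_2|>|\lambda_3|$, and (iii) as stated already covers all fixed points and invariant lines, so you need neither to shrink $\mathcal U_r$ nor to strengthen (iii).
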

\begin{proof}
It is enough to consider $\Gamma_{12}:=\langle h_1,h_2\rangle$. Suppose that $\Gamma_{12}$ has a finite orbit in $\Ptwo$. Every point of that orbit is fixed by a positive power of each generator. Since $h_1$ and $h_2$ have three eigenvalues of pairwise distinct moduli, every positive power has the same eigendirections as the corresponding generator. Hence such a point would be fixed by both $h_1$ and $h_2$, contrary to Lemma~\ref{lem:stable-regular-schottky}(iii). Applying the same argument in the dual projective plane excludes a finite orbit of projective lines. Thus $\Gamma_{12}$, and hence $\Gamma_\rho$, is strongly irreducible.
\end{proof}

\begin{proposition}\label{prop:strong-irr-four-to-infty}
Let $\Gamma<\PSL(3,\C)$ be a discrete strongly irreducible group. If $\nu(\Gamma)\ge4$, then $\nu(\Gamma)=\infty$.
\end{proposition}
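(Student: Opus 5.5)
We argue by cases on $\nu(\Gamma)$, using the line-counting theorem of~\cite{BCN16} and the four-line classification of~\cite{BCNfour11}. By~\cite[Theorem~1.2]{BCN16}, $\nu(\Gamma)$ can only take the values $1,2,3,4$ or $\infty$. Hence, under the hypothesis $\nu(\Gamma)\ge4$, it suffices to rule out the single case $\nu(\Gamma)=4$.

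So assume $\nu(\Gamma)=4$. The four-line classification~\cite{BCNfour11}, already used in the proof of Theorem~\ref{thm:global-four-lines}, gives a finite-index subgroup $\Gamma_0<\Gamma$ with the following properties:
\begin{itemize}
\item $\Gamma_0$ is conjugate to a hyperbolic toral group, that is, a group of the form $\mathbb Z^2\rtimes\mathbb Z$ acting by (complexified) affine maps;
\item $\Lambda_{\Kul}(\Gamma)$ consists of exactly four lines in general position, or at least of a finite configuration of lines.
\end{itemize}
The key step is to extract from this classification a finite $\Gamma$-orbit in $\Ptwo$ or in $\Pdual$.

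The most robust way to do this is to use the set $\mathcal C$ of projective lines contained in $\Lambda_{\Kul}(\Gamma)$. This set is $\Gamma$-invariant, since $\Lambda_{\Kul}(\Gamma)$ is. When $\nu(\Gamma)=4$, the explicit model of~\cite{BCNfour11} shows that $\mathcal C$ is finite: in the toral model, the limit set is the union of the four lines bounding the copies of $\mathbb H\times\mathbb H$. Then $\mathcal C$ is a finite nonempty $\Gamma$-invariant subset of $\Pdual$. This is a finite orbit of projective lines, which contradicts strong irreducibility.

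Alternatively, one can use the pairwise intersection points of the lines in $\mathcal C$. They form a finite invariant subset of $\Ptwo$, which gives the same contradiction.

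The main obstacle is justifying finiteness of $\mathcal C$, or at least of some invariant configuration, from the cited classification rather than merely "contains four lines". If the classification is only available for the finite-index subgroup $\Gamma_0$, there is a fallback. In that case we use that $\Lambda_{\Kul}(\Gamma_0)=\Lambda_{\Kul}(\Gamma)$ for finite-index subgroups. We also use that the toral model has a normal abelian subgroup $\mathbb Z^2$ whose common fixed points (or invariant lines) form a finite set. This finite set is invariant under $\Gamma_0$, so its $\Gamma$-orbit is a finite union of $\Gamma_0$-orbits, hence finite. Either way we obtain a finite orbit, contradicting strong irreducibility. Therefore $\nu(\Gamma)\ne4$, and so $\nu(\Gamma)=\infty$.
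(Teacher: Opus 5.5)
Your overall strategy is the paper's: use the line-counting theorem~\cite{BCN16} to reduce to $\nu(\Gamma)=4$, then use the classification~\cite{BCNfour11} to contradict strong irreducibility. However, your main mechanism for the contradiction rests on a false description of the model. In the hyperbolic toral case $\Lambda_{\Kul}(\Gamma)$ is \emph{not} a finite union of lines. It cannot be, since the complement of finitely many complex lines is connected, whereas $\Omega_{\Kul}(\Gamma)$ is four disjoint copies of $\mathbb H\times\mathbb H$. In suitable affine coordinates the toral subgroup acts by $(z,w)\mapsto(\lambda^k z+a,\lambda^{-k}w+a')$ with $(a,a')$ in a lattice of $\mathbb R^2$. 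The ordinary set is $\{\operatorname{Im}z\neq0,\ \operatorname{Im}w\neq0\}$. The limit set $\Lambda_{\Kul}(\Gamma)$ is the union of the two real one-parameter pencils $\{z=t\}$ and $\{w=s\}$, $s,t\in\mathbb R\cup\{\infty\}$. Their vertices are $p_1=[0:1:0]$ and $p_2=[1:0:0]$, and their common member is the line at infinity $\ell_\infty$. Thus $\mathcal C$ is infinite; $\nu=4$ only because at most two lines can be chosen from each pencil. The set of pairwise intersection points of lines of $\mathcal C$ is infinite too. Both your primary argument and its ``alternative'' therefore fail.

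Your fallback is the correct argument and is essentially the paper's proof: a finite-index subgroup preserves a proper projective subspace, so $\Gamma$ has a finite orbit in $\Ptwo$ or $\Pdual$. One detail needs fixing. The common fixed-point set of the normal translation lattice is the whole line $\ell_\infty$, not a finite set. What is finite is its set of common invariant lines, namely $\{\ell_\infty\}$, and $\Gamma_0$ preserves it by normality. If you want to stay with the canonical configuration $\mathcal C$, the right finite $\Gamma$-invariant set is $\{p_1,p_2\}$, the set of points lying on at least three lines of $\mathcal C$. This gives the contradiction for $\Gamma$ directly, without passing to $\Gamma_0$.
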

\begin{proof}
The line-counting theorem~\cite{BCN16} gives
\[
\nu(\Gamma)\in\{1,2,3,4,\infty\}.
\]
If $\nu(\Gamma)=4$, the four-line classification~\cite{BCNfour11} gives a finite-index subgroup preserving a proper projective subspace, contrary to strong irreducibility. Hence $\nu(\Gamma)=\infty$.
\end{proof}

\begin{proposition}\label{prop:nu-infty-open}
For every $\rho\in\mathcal U_r$ one has
\[
\nu(\Gamma_\rho)=\infty.
\]
\end{proposition}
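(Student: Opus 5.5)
The plan is to reduce to Proposition~\ref{prop:strong-irr-four-to-infty}. By Lemma~\ref{lem:strong-irr-open}, each $\Gamma_\rho$ with $\rho\in\mathcal U_r$ is strongly irreducible. By Lemma~\ref{lem:stable-regular-schottky}(i), it is discrete. Hence it suffices to show $\nu(\Gamma_\rho)\ge4$, that is, to exhibit four lines in general position inside $\Lambda_{\Kul}(\Gamma_\rho)$.

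\emph{Step 1: the four lines.} By Lemma~\ref{lem:stable-regular-schottky}(iv), the distinguished invariant lines of $h_1^{\pm1},h_2^{\pm1}$ are in general position. Each $h_i$ has three eigenvalues of distinct moduli, so $h_i$ is strongly loxodromic. Thus both $h_i$ and $h_i^{-1}$ have a repelling line, namely $\ell_{h_i}^-$ and $\ell_{h_i^{-1}}^-=\ell_{h_i}^+$. These are exactly the four distinguished lines.

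\emph{Step 2: these lines lie in the Kulkarni limit set.} The powers $h_i^{\pm n}$ converge pseudo--projectively to rank-one maps whose kernels are these repelling lines. Equation~(2) then places the four lines in $\Lambda_{\Myr}(\Gamma_\rho)$. The main obstacle is passing from $\Lambda_{\Myr}$ to $\Lambda_{\Kul}$: Corollary~3.3 and \cite[Theorem~2.6]{BCN11} need three lines already known to lie in $\Lambda_{\Kul}$, so invoking them would be circular.

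I would avoid the circularity by working directly with Kulkarni's definition. Take a point $x\in\ell_{h_i}^-$ off the eigenpoints of $h_i$. Its forward orbit under $h_i^{n}$ accumulates at an eigenpoint, which already places the eigenpoints in $L_1$. To get the whole line, I would use $L_2$: for a compact set $K$ meeting $\ell^-_{h_i}$ transversally away from the other invariant line, the images $h_i^{-n}(K)$ accumulate on all of $\ell^-_{h_i}$, because $h_i^{-1}$ expands toward the attracting line of $h_i^{-1}$, which is $\ell^-_{h_i}$. Alternatively, I would cite the standard fact (\cite{CNS13}, \cite{Kul78}) that for a strongly loxodromic element $g$, the Kulkarni limit set of $\langle g\rangle$ is $\ell_g^+\cup\ell_g^-$, together with the monotonicity $\Lambda_{\Kul}(\langle h_i\rangle)\subset\Lambda_{\Kul}(\Gamma_\rho)$. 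The monotonicity holds because orbit accumulation for a subgroup is accumulation for the group, and the $L_0,L_1$ strata only grow.

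\emph{Step 3: conclusion.} The four lines then lie in $\Lambda_{\Kul}(\Gamma_\rho)$ and are in general position, so $\nu(\Gamma_\rho)\ge4$. Proposition~\ref{prop:strong-irr-four-to-infty} then gives $\nu(\Gamma_\rho)=\infty$.

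For robustness one could instead use $\theta$-divergence. Theorem~2 gives $\Lambda_{\Myr}=\Thinc(\Lambda_\theta)$. The three-line hypothesis can then be checked via the cyclic-subgroup fact above, after which Theorem~2 identifies $\Lambda_{\Kul}$ with the union of limit lines, and that union contains the four lines.
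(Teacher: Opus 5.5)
You have the paper's architecture: place the four invariant lines of $h_1^{\pm1},h_2^{\pm1}$ from Lemma~\ref{lem:stable-regular-schottky}(iv) in $\Lambda_{\Kul}(\Gamma_\rho)$ via the cyclic subgroups, then apply Proposition~\ref{prop:strong-irr-four-to-infty}. You are also right that \cite[Theorem~2.6]{BCN11} would be circular here.

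\textbf{The gap.} The missing step is the containment $\Lambda_{\Kul}(\langle h_i\rangle)\subset\Lambda_{\Kul}(\Gamma_\rho)$. Your reason for it is that orbit accumulation passes to the larger group and that $L_0$, $L_1$ only grow. That covers exactly the strata that do not matter. For strongly loxodromic $h$, $L_0(\langle h\rangle)\cup L_1(\langle h\rangle)$ is just the three eigenpoints, and both lines $\ell_h^\pm$ come from $L_2(\langle h\rangle)$.

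There is no formal reason for $L_2$ to be monotone. The compact sets admissible for $\Gamma_\rho$ must avoid $L_0(\Gamma_\rho)\cup L_1(\Gamma_\rho)$, which can be much larger than $L_0(\langle h\rangle)\cup L_1(\langle h\rangle)$. For example, if $\ell_h^+\subset L_0(\Gamma_\rho)\cup L_1(\Gamma_\rho)$, no admissible compact set meets $\ell_h^+$, and nothing you wrote puts $\ell_h^-$ into $\Lambda_{\Kul}(\Gamma_\rho)$.

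Your direct $L_2$ sketch has two problems. First, it omits the same check that $K$ avoids $L_0(\Gamma_\rho)\cup L_1(\Gamma_\rho)$. Second, it points the wrong way: $h^{-n}$ collapses $\Ptwo\setminus\ell_h^+$ onto the attracting point of $h^{-1}$. So a compact set meeting $\ell_h^-$ away from $\ell_h^+$ does not sweep out $\ell_h^-$; $K$ must meet $\ell_h^+$. Your robustness variant through Theorem~2 needs the same cyclic containment to verify the three-line hypothesis, so it does not avoid the problem.

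\textbf{How the paper closes it.} The paper uses the non-elementary alternative \cite[Theorem~6.3.6(b)]{CNS13}. This is exactly where it uses that $\Gamma_\rho$ has no global fixed point and no global invariant line.

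\textbf{A repair from the definition.} If you prefer to argue directly, a dichotomy works. Put $L:=L_0(\Gamma_\rho)\cup L_1(\Gamma_\rho)$. It is closed, $\Gamma_\rho$-invariant, and contains all eigenpoints of $h_1,h_2$.
\begin{enumerate}[label=(\alph*)]
\item Suppose one of the four lines lies in $L$. The opposite-flag conditions imposed in constructing $\mathcal U_r$ are open, so they may be assumed to hold on $\mathcal U_r$. Under them, the dual proximal dynamics of $h_1^{\pm1},h_2^{\pm1}$ carries that line, in the limit, onto each of the other three. Since $L$ is closed and invariant, all four lines lie in $L$.
\item Suppose none of the four lines lies in $L$. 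For each $h\in\{h_1,h_2\}$, pick a point of $\ell_h^+\setminus L$ (respectively $\ell_h^-\setminus L$) and a compact neighbourhood of it disjoint from $L$. The destination computation in the proof of Theorem~1 shows that $h^{-n}$ (respectively $h^{n}$) sweeps this neighbourhood across all of $\ell_h^-$ (respectively $\ell_h^+$). Hence both lines lie in $L_2(\Gamma_\rho)$.
\end{enumerate}
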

\begin{proof}
By the cyclic projective classification~\cite[Proposition~4.2.28]{CNS13}, each of the four distinguished invariant lines supplied by Lemma~\ref{lem:stable-regular-schottky}(iv) belongs to the Kulkarni limit set of the corresponding cyclic subgroup. Since $\Gamma_\rho$ is strongly irreducible, it has neither a global fixed point nor a global invariant projective line; hence the non-elementary alternative of~\cite[Theorem~6.3.6(b)]{CNS13} applies and these cyclic limit sets are contained in $\Lambda_{\Kul}(\Gamma_\rho)$. Thus $\nu(\Gamma_\rho)\ge4$, and Proposition~\ref{prop:strong-irr-four-to-infty} gives the result.
\end{proof}

\begin{corollary}\label{cor:local-five-line-open-locus}
For every $\rho\in\mathcal U_r$ and every $p\in\Lambda_{\Kul}(\Gamma_\rho)$, there exist five lines
\[
\ell_0,\ldots,\ell_4\subset\Lambda_{\Kul}(\Gamma_\rho)
\]
in general position with $p\in\ell_0$. Consequently every connected component of $\Omega_{\Kul}(\Gamma_\rho)$ is complete Kobayashi hyperbolic and Kobayashi hyperbolically embedded in $\Ptwo$; it is also taut, pseudoconvex, Stein, and a domain of holomorphy.
\end{corollary}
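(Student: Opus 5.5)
The plan is to reduce the statement to the machinery already assembled in Section~\ref{sec:geometry-ordinary}, applied to the family $\mathcal C$ of all projective lines contained in $\Lambda_{\Kul}(\Gamma_\rho)$. Fix $\rho\in\mathcal U_r$. Proposition~\ref{prop:nu-infty-open} gives $\nu(\Gamma_\rho)=\infty$. In particular $\nu(\Gamma_\rho)>4$, so we are in the second case of the proof of Theorem~\ref{thm:global-four-lines}. There, \cite[Theorem~1.2, Proposition~5.15]{BCN16} show that $\Lambda_{\Kul}(\Gamma_\rho)$ is the union of a perfect set of projective lines. Hence
\[
\Lambda_{\Kul}(\Gamma_\rho)=\bigcup_{\ell\in\mathcal C}\ell,
\]
and this union is closed because the Kulkarni limit set is closed. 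As a consequence, every $p\in\Lambda_{\Kul}(\Gamma_\rho)$ lies on some $\ell_0\in\mathcal C$.

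Next I apply Lemma~\ref{lem:prescribed-line-combinatorial} to $\mathcal C$. This is legitimate since $\nu(\mathcal C)=\nu(\Gamma_\rho)=\infty$ by definition of $\nu$. It lets us complete $\ell_0$ to a family of lines of $\mathcal C$ in general position of any finite size. Taking size five gives the lines $\ell_0,\dots,\ell_4\subset\Lambda_{\Kul}(\Gamma_\rho)$ with $p\in\ell_0$, which is the first assertion.

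The geometric consequences then follow directly from earlier results. Since $\Omega_{\Kul}(\Gamma_\rho)=\Ptwo\setminus\bigcup_{\ell\in\mathcal C}\ell$, every boundary point of this set lies in $\Lambda_{\Kul}(\Gamma_\rho)$. Moreover, $\mathcal C$ contains five lines in general position. So the hypotheses of Theorem~\ref{thm:local-five-line} hold, and every component is complete Kobayashi hyperbolic and hyperbolically embedded. Alternatively, one can invoke Theorem~\ref{thm:global-four-lines} directly, since $\nu\ge4$.

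For tautness, pseudoconvexity, Steinness and being a domain of holomorphy, I repeat the closing argument of Theorem~\ref{thm:global-four-lines}. A component lies in $\Ptwo\setminus\ell\simeq\C^2$ for any limit line $\ell$. Completeness gives tautness, tautness gives pseudoconvexity, and the Levi problem in $\C^2$ gives the remaining properties.

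The only point requiring care is the identification of $\Lambda_{\Kul}(\Gamma_\rho)$ with a union of lines. This is what guarantees that an \emph{arbitrary} point of the limit set, not just a point on one of the four distinguished lines, lies on a line of $\mathcal C$. That identification relies on the cited structure result \cite[Proposition~5.15]{BCN16} in the regime $\nu>4$, and I expect this to be the main obstacle.

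If one prefers to avoid that citation in the $\theta$-divergent setting, Theorem~2 offers a second route. The four distinguished lines of Lemma~\ref{lem:stable-regular-schottky}(iv) lie in $\Lambda_{\Kul}$, which yields the three-line hypothesis. Theorem~2 then gives $\Lambda_{\Kul}(\Gamma_\rho)=\Thinc(\Lambda_\theta(\Gamma_\rho))$, which is explicitly a closed union of limit lines. This supplies the same conclusion independently.
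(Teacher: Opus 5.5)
Your proof is correct, but your main line takes a different route from the paper's. Your fallback is the paper's argument.

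The key step is showing that every point of $\Lambda_{\Kul}(\Gamma_\rho)$ lies on a line contained in $\Lambda_{\Kul}(\Gamma_\rho)$. The paper gets this from $\theta$-divergence. Since $\nu(\Gamma_\rho)=\infty$, the three-line hypothesis of Theorem~2 holds, and Theorem~2 gives $\Lambda_{\Kul}(\Gamma_\rho)=\bigcup_{[\ell]\in\widehat\Lambda_{\CoG}(\Gamma_\rho)}\ell$. Lemma~\ref{lem:prescribed-line-combinatorial} and Theorem~\ref{thm:local-five-line} then finish exactly as you do. That is your second route; there you took the three lines from Lemma~\ref{lem:stable-regular-schottky}(iv), which is equally valid.

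Your primary route instead imports the structure result \cite[Proposition~5.15]{BCN16} for $\nu>4$. With it, the corollary is literally the second case of Theorem~\ref{thm:global-four-lines}. As you note, applying Theorem~\ref{thm:global-four-lines} with $\nu\ge4$ already yields all the Kobayashi and Stein conclusions.

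\begin{itemize}
\item What your route buys: it uses nothing about the Schottky locus beyond $\nu=\infty$, so $\theta$-divergence plays no role.
\item What the paper's route buys: it avoids the perfect-set-of-lines theorem of \cite{BCN16}. It still uses the line-counting theorem, through Proposition~\ref{prop:nu-infty-open}, and the three-line equality of \cite{BCN11} behind Theorem~2. It also identifies the line through each limit point explicitly, as a limiting line in $\pi_2(\Lambda_\theta(\Gamma_\rho))$. That identification is the point of the remark following the proof of Theorem~4.
\end{itemize}
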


\begin{proof}
Fix $\rho\in\mathcal U_r$ and put $\Gamma=\Gamma_\rho$. By Proposition~\ref{prop:nu-infty-open}, $\nu(\Gamma)=\infty$. In particular, $\Lambda_{\Kul}(\Gamma)$ contains three projective lines in general position, so the general-position hypothesis in Theorem~2 is satisfied. Since $\Gamma$ is also $\theta$-divergent, Theorem~2 gives
\[
\Lambda_{\Kul}(\Gamma)=\bigcup_{[\ell]\in\widehat\Lambda_{\CoG}(\Gamma)}\ell.
\]
Thus every $p\in\Lambda_{\Kul}(\Gamma)$ lies on a line $\ell_0\subset\Lambda_{\Kul}(\Gamma)$. Lemma~\ref{lem:prescribed-line-combinatorial} completes this prescribed line to five limit lines in general position. Theorem~\ref{thm:local-five-line} gives completeness and hyperbolic embedding, and the last assertions follow as in Theorem~\ref{thm:global-four-lines}.
\end{proof}

\begin{proof}[Proof of Theorem 4]
By Lemmas~\ref{lem:stable-regular-schottky} and~\ref{lem:strong-irr-open}, Propositions~\ref{prop:strong-irr-four-to-infty} and~\ref{prop:nu-infty-open}, and Corollary~\ref{cor:local-five-line-open-locus}, the set $\mathcal U_r$ is a nonempty subset of $X_r$, open in the usual complex topology, and every $\rho\in\mathcal U_r$ has all the properties stated in the theorem.

The complex algebraic variety
\[
X_r\simeq\PSL(3,\C)^r
\]
is irreducible. A proper Zariski-closed subset of an irreducible complex algebraic variety has empty interior in the usual complex topology. Hence every nonempty subset of $X_r$ that is open in the usual complex topology is Zariski dense, so
\[
\overline{\mathcal U_r}^{\,Z}=X_r.
\]
Since $\mathcal U_r\subset\mathcal K_r$, the locus with $\nu=\infty$ and the locus with the complete Kobayashi conclusion both contain $\mathcal U_r$. Their Zariski closures therefore contain $\mathcal K_r$.
\end{proof}

\begin{remark}
The numerical equality $\nu=\infty$ becomes geometric once one knows that every boundary point lies on a limit line. On the open locus this is supplied directly by $\theta$-divergence and Theorem~2; Lemma~\ref{lem:prescribed-line-combinatorial} then gives the local five-line condition. This is the same local mechanism which appears in the proof of Theorem~\ref{thm:global-four-lines}.
\end{remark}

\begin{corollary}\label{cor:strong-irr-zariski-dense}
The locus
\[
\mathcal I_r:=\{\rho\in\mathcal K_r:\rho(F_r)\text{ is strongly irreducible}\}
\]
is Zariski dense in $\mathcal K_r$.
\end{corollary}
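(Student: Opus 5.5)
The approach is the same as for the Zariski density assertions at the end of the proof of Theorem~4. The inclusions to be proved are
\[
\mathcal U_r\subset\mathcal I_r\subset\mathcal K_r\subset X_r .
\]
Once these hold, the irreducibility of $X_r\simeq\PSL(3,\C)^r$ forces the Zariski closure of $\mathcal I_r$ to contain $\mathcal K_r$. We read Zariski density in $\mathcal K_r$ as density for the subspace Zariski topology induced from $X_r$, as in Theorem~4.

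The first step is to verify $\mathcal U_r\subset\mathcal I_r$. Take $\rho\in\mathcal U_r$.
\begin{itemize}
\item By Lemma~\ref{lem:stable-regular-schottky}(i), $\Gamma_\rho$ is discrete and free of rank $r\ge2$, so it is not virtually cyclic.
\item By Lemma~\ref{lem:stable-regular-schottky}(ii), $\Omega_{\Kul}(\Gamma_\rho)\neq\varnothing$. Hence $\Gamma_\rho$ is a complex Kleinian group and $\rho\in\mathcal K_r$.
\item By Lemma~\ref{lem:strong-irr-open}, $\Gamma_\rho$ is strongly irreducible, so $\rho\in\mathcal I_r$.
\end{itemize}

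The second step is the density argument. Since $\mathcal U_r$ is nonempty and open in the usual topology of the irreducible variety $X_r$, its Zariski closure is all of $X_r$. This was already shown in the proof of Theorem~4, using that a proper Zariski-closed subset has empty analytic interior. Monotonicity of closure then gives
\[
X_r=\overline{\mathcal U_r}^{\,Z}\subset\overline{\mathcal I_r}^{\,Z}.
\]
Intersecting with $\mathcal K_r$ shows that the relative Zariski closure of $\mathcal I_r$ in $\mathcal K_r$ is $\mathcal K_r$ itself.

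No step here is a real obstacle. The only point needing care is to say explicitly in which topology density is meant, namely the subspace Zariski topology on $\mathcal K_r$. The locus $\mathcal K_r$ need not be Zariski locally closed, so one should avoid suggesting that $\mathcal I_r$ is dense in any intrinsic algebraic structure on $\mathcal K_r$. If a sharper statement is wanted, we can also record that $\mathcal I_r$ contains a nonempty analytically open subset of $X_r$, namely $\mathcal U_r$.
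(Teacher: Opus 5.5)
Your proposal is correct and follows the paper's own argument. You establish $\mathcal U_r\subset\mathcal I_r\subset\mathcal K_r$, which the paper takes directly from Theorem~4 and you unpack via Lemmas~\ref{lem:stable-regular-schottky} and~\ref{lem:strong-irr-open}, and then use that $\mathcal U_r$ is Zariski dense in the irreducible variety $X_r$, so that $\overline{\mathcal I_r}^{\,Z}\supset\mathcal K_r$.
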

\begin{proof}
Theorem~4 gives $\mathcal U_r\subset\mathcal I_r$ and $\overline{\mathcal U_r}^{\,Z}=X_r$. Hence the Zariski closure of $\mathcal I_r$ contains $\mathcal K_r$.
\end{proof}

\subsection{The unmarked Zariski--Chabauty topology}
We now pass from marked representations to their image subgroups. We shall use a final topology induced by the marked representation spaces. It is convenient first to compare it with Chabauty's classical construction on closed subgroups~\cite{Chab50}. The two topologies are not identified.

If $\PSL(3,\C)$ is endowed with its Zariski topology and $\operatorname{Sub}(\PSL(3,\C))$ denotes the set of all subgroups, one may also form a hit-and-miss topology generated by
\[
U^+(V):=\{H:H\cap V\ne\varnothing\}
\]
for Zariski-open $V\subset\PSL(3,\C)$ and
\[
U^-(Z):=\{H:H\cap Z=\varnothing\}
\]
for Zariski-closed $Z\subset\PSL(3,\C)$. We use this only for comparison.

For this comparison one may use the following sequential notation. If $(H_n)$ is a sequence of subgroups of $\PSL(3,\C)$, set
\[
\liminf_Z H_n=
\left\{g:\begin{array}{l}
\text{every Zariski-open neighborhood $V$ of $g$ meets $H_n$}\\
\text{for all sufficiently large $n$}
\end{array}\right\}
\]
and
\[
\limsup_Z H_n=
\left\{g:\begin{array}{l}
\text{every Zariski-open neighborhood $V$ of $g$ meets $H_n$}\\
\text{for infinitely many $n$}.
\end{array}\right\}.
\]
We shall say, only in this comparison paragraph, that $H_n$ converges Zariski-geometrically to $H_\infty$ if
\[
\liminf_ZH_n=\limsup_ZH_n=H_\infty.
\]

\begin{lemma}
Let $H_n\subset\PSL(3,\C)$ and put $A_n:=\overline{H_n}^{\,Z}$. Then
\[
\liminf_ZH_n=\liminf_ZA_n,
\qquad
\limsup_ZH_n=\limsup_ZA_n.
\]
Thus this hit-and-miss convergence depends only on the Zariski closures of the subgroups.
\end{lemma}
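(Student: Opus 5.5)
The plan is to show that, for every $g\in\PSL(3,\C)$, a Zariski-open set $V\ni g$ meets $H_n$ if and only if it meets $A_n$. Once this equivalence holds for each $n$ and each $V$, the definitions of $\liminf_Z$ and $\limsup_Z$ give both equalities at once, since they only involve the collection of indices $n$ for which a given $V$ meets the $n$-th set.

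One implication is immediate: $H_n\subset A_n$, so if $V\cap H_n\ne\varnothing$ then $V\cap A_n\ne\varnothing$.

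For the converse I will argue by contraposition. Suppose $V\cap H_n=\varnothing$. Then $H_n\subset Z:=\PSL(3,\C)\setminus V$, and $Z$ is Zariski closed. By minimality of the Zariski closure, $A_n=\overline{H_n}^{\,Z}\subset Z$, so $V\cap A_n=\varnothing$. This is the only real step. It uses nothing about the group structure, only that $V$ is open in the very topology with respect to which the closure is taken. So there is no genuine obstacle; the only care needed is to keep the "neighborhood" in the definitions of $\liminf_Z$ and $\limsup_Z$ as Zariski-open, matching the closure operation.

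Finally, I will conclude that for every $g$ and every Zariski-open neighborhood $V$ of $g$, the index sets $\{n:V\cap H_n\ne\varnothing\}$ and $\{n:V\cap A_n\ne\varnothing\}$ coincide. Membership of $g$ in $\liminf_Z$ requires this set to be cofinite for every such $V$, and membership in $\limsup_Z$ requires it to be infinite for every such $V$. The equalities follow, and with them the remark that this convergence sees only the Zariski closures $A_n$.
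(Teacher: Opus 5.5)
Your proposal is correct and follows the paper's proof. Both reduce the lemma to the equivalence $H_n\cap V\ne\varnothing \iff A_n\cap V\ne\varnothing$ for Zariski-open $V$, from which both equalities follow directly from the definitions of $\liminf_Z$ and $\limsup_Z$; the paper gets this equivalence from the Zariski density of $H_n$ in $A_n$, and your contrapositive argument via minimality of the Zariski closure is the same fact.
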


\begin{proof}
If $V\subset\PSL(3,\C)$ is Zariski open, then
\[
H_n\cap V\ne\varnothing
\quad\Longleftrightarrow\quad
A_n\cap V\ne\varnothing.
\]
Indeed, $H_n$ is Zariski dense in $A_n$, so every nonempty Zariski-open subset of $A_n$ meets $H_n$. The two equalities follow directly from the definitions.
\end{proof}

\begin{definition}
Let
\[
\mathcal D_{\mathrm{fg}}:=\bigcup_{r\ge2}\pi_r(\mathcal K_r),
\qquad
\pi_r:\mathcal K_r\longrightarrow\operatorname{Sub}(\PSL(3,\C)),
\qquad
\pi_r(\rho)=\rho(F_r).
\]
The Zariski--Chabauty topology used in this paper is the final topology on $\mathcal D_{\mathrm{fg}}$ induced by all the maps $\pi_r$. Thus $U\subset\mathcal D_{\mathrm{fg}}$ is open if and only if
\[
\pi_r^{-1}(U)
\]
is Zariski open in $\mathcal K_r$ for every $r\ge2$.
\end{definition}

Thus all finite markings enter the definition. In particular, if $G_0\in\mathcal D_{\mathrm{fg}}$, if $U$ is a Zariski--Chabauty neighborhood of $G_0$, and if $\rho_0\in\mathcal K_r$ is any marking with $\rho_0(F_r)=G_0$, then $\pi_r^{-1}(U)$ is a Zariski-open neighborhood of $\rho_0$ in $\mathcal K_r$.

For $d\ge2$, let $\mathcal D_d\subset\mathcal D_{\mathrm{fg}}$ be the set of groups that admit a marking by $F_r$ for some $2\le r\le d$, and give $\mathcal D_d$ the subspace topology. Thus $d$ bounds the number of generators of the group; the topology itself still remembers all finite markings through $\mathcal D_{\mathrm{fg}}$.

\begin{theorem}
For every $d\ge2$, the set
\[
\mathcal E_d:=\{G\in\mathcal D_d:\nu(G)=\infty\}
\]
is dense in $\mathcal D_d$ for the Zariski--Chabauty topology. More precisely, every Zariski--Chabauty neighborhood of every $G_0\in\mathcal D_d$ contains a strongly irreducible, $\theta$-divergent Schottky-type complex Kleinian group $G$ with $\nu(G)=\infty$ such that every point of $\Lambda_{\Kul}(G)$ lies on a limit line which can be completed to five limit lines in general position. Consequently every connected component of $\Omega_{\Kul}(G)$ is complete Kobayashi hyperbolic and Kobayashi hyperbolically embedded in $\Ptwo$, and hence is taut, pseudoconvex, Stein, and a domain of holomorphy.
\end{theorem}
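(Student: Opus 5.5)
The plan is to reduce density in the final topology to Zariski density of the marked Schottky loci $\mathcal U_r$ from Theorem~4. Fix $G_0\in\mathcal D_d$ and a Zariski--Chabauty open set $U\subset\mathcal D_{\mathrm{fg}}$ containing $G_0$. By definition of $\mathcal D_d$, there is $r$ with $2\le r\le d$ and a marking $\rho_0\in\mathcal K_r$ with $\rho_0(F_r)=G_0$. As noted after the definition of the Zariski--Chabauty topology, $W:=\pi_r^{-1}(U)$ is then a Zariski-open neighborhood of $\rho_0$ in $\mathcal K_r$. In particular, $W$ is nonempty.

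The key step is to show that $W\cap\mathcal U_r\neq\varnothing$. Since $W$ is Zariski open in $\mathcal K_r$ with the subspace Zariski topology, write $W=\mathcal K_r\cap V$ with $V\subset X_r$ Zariski open. Because $\rho_0\in V$, the set $V$ is a nonempty Zariski-open subset of the irreducible variety $X_r\simeq\PSL(3,\C)^r$. Theorem~4 gives $\overline{\mathcal U_r}^{\,Z}=X_r$, so $V\cap\mathcal U_r\ne\varnothing$. Otherwise $\mathcal U_r$ would lie in the proper closed set $X_r\setminus V$. Alternatively, $V$ is dense open in the usual topology and $\mathcal U_r$ is open nonempty. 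Since $\mathcal U_r\subset\mathcal K_r$ (its images are non-virtually-cyclic complex Kleinian groups, being free of rank $r\ge2$ with nonempty ordinary set by Lemma~\ref{lem:stable-regular-schottky}), we get $\rho\in\mathcal U_r\cap W$.

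Next, set $G:=\rho(F_r)=\pi_r(\rho)$. Then $G\in U$ because $\rho\in\pi_r^{-1}(U)$. Also $G\in\mathcal D_d$, since it is marked by $F_r$ with $r\le d$. All the listed properties of $G$ come directly from the proof of Theorem~4 applied to $\rho$. Lemma~\ref{lem:strong-irr-open} gives strong irreducibility, and Lemma~\ref{lem:stable-regular-schottky} gives $\theta$-divergence and the Schottky type. Proposition~\ref{prop:nu-infty-open} gives $\nu(G)=\infty$. Corollary~\ref{cor:local-five-line-open-locus} gives the five-line completion through every point of $\Lambda_{\Kul}(G)$, together with the Kobayashi, tautness, pseudoconvexity and Stein conclusions. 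Thus $G\in U\cap\mathcal E_d$, and density of $\mathcal E_d$ in $\mathcal D_d$ follows, because every open neighborhood of $G_0$ in the subspace topology is of the form $U\cap\mathcal D_d$.

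The main point needing care is the topological bookkeeping in the first step. Zariski openness of $\pi_r^{-1}(U)$ is relative to $\mathcal K_r$, which need not itself be Zariski open or constructible. So I would argue via $V\subset X_r$ as above, rather than invoking irreducibility of $\mathcal K_r$. The argument uses only one marking of $G_0$, which suffices because the final topology makes every marking's preimage open. No compatibility between different markings is needed.
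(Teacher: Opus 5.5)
Your proposal is correct and follows the paper's proof essentially step for step. You pull back the neighborhood through a single marking $\rho_0\in\mathcal K_r$ with $r\le d$, write the relatively Zariski-open preimage as $V\cap\mathcal K_r$ for some Zariski-open $V\subset X_r$ containing $\rho_0$, use the Zariski density of $\mathcal U_r$ from Theorem~4 together with $\mathcal U_r\subset\mathcal K_r$ to pick $\rho\in V\cap\mathcal U_r$, and read off all properties of $G=\rho(F_r)\in U\cap\mathcal D_d$ from Theorem~4 and its supporting lemmas. Your extra remarks, namely the explicit justification of $\mathcal U_r\subset\mathcal K_r$ and the alternative argument via the analytic density of $V$, are harmless refinements of the same argument.
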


\begin{proof}
Let $G_0\in\mathcal D_d$ and let $U\subset\mathcal D_d$ be an open neighborhood of $G_0$. By the subspace topology, there is an open set $O\subset\mathcal D_{\mathrm{fg}}$ such that
\[
U=O\cap\mathcal D_d.
\]
Choose a marking
\[
\rho_0:F_r\longrightarrow\PSL(3,\C),
\qquad
\rho_0(F_r)=G_0,
\qquad
2\le r\le d.
\]
By the definition of the final topology,
\[
\pi_r^{-1}(O)
\]
is a Zariski-open neighborhood of $\rho_0$ in $\mathcal K_r$. Hence there exists a Zariski-open subset $W$ of $X_r=\Hom(F_r,\PSL(3,\C))$ such that
\[
\rho_0\in W,
\qquad
W\cap\mathcal K_r\subset\pi_r^{-1}(O).
\]
By Theorem~4, the Schottky locus $\mathcal U_r$ is Zariski dense in $X_r$. Therefore
\[
W\cap\mathcal U_r\ne\varnothing.
\]
Choose $\rho$ in this intersection and put $G=\rho(F_r)$. Since $\mathcal U_r\subset\mathcal K_r$, we have $\rho\in\pi_r^{-1}(O)$, and therefore $G\in O$. The group $G$ is generated by at most $r\le d$ elements, so $G\in\mathcal D_d$; hence $G\in U$. Finally Theorem~4 gives
\[
\nu(G)=\infty,
\]
and the same group is strongly irreducible and $\theta$-divergent, satisfies the local five-line condition at every point of its Kulkarni limit set, and has the asserted complete Kobayashi geometry. Thus every neighborhood of $G_0$ contains such a group, proving the result.
\end{proof}

Here ``approximation'' means only membership in the closure: every open neighborhood contains one of the indicated groups. We do not assert that the final Zariski--Chabauty topology is sequential.

\section{The Zassenhaus obstruction}\label{sec:zassenhaus-obstruction}
The preceding density statements use two different topologies: the Zariski topology on the marked representation spaces and the final Zariski--Chabauty topology on the image groups. Neither statement gives density for the usual topology of a representation variety, and the unmarked statement does not give density for the usual convergence topologies on closed subgroups. We record a simple obstruction.

For a fixed finitely generated group $\Gamma$, a sequence of representations
\[
\rho_j:\Gamma\longrightarrow\PSL(3,\C)
\]
is said to converge \emph{algebraically} to $\rho$ if
\[
\rho_j(\gamma)\longrightarrow\rho(\gamma)
\qquad\text{for every }\gamma\in\Gamma.
\]
For $\Gamma=F_r$, this is exactly convergence in the usual complex topology of $X_r$, since it is enough to check convergence on a free basis.

We also use the classical Chabauty topology on closed subgroups of the Lie group $\PSL(3,\C)$~\cite{Chab50}. A sequence of closed subgroups $G_j$ converges \emph{geometrically} to $G$ when it converges to $G$ in this topology. We shall use the following form of strong convergence, standard in the Kleinian-group literature~\cite{McMullen99,KapovichSequences}: $G_j$ converges \emph{strongly} to a finitely generated group $G$ if $G_j\to G$ geometrically and, for all sufficiently large $j$, there are surjective homomorphisms
\[
\chi_j:G\longrightarrow G_j
\]
such that $\chi_j(g)\to g$ for every $g\in G$. This is the only form of strong convergence needed below.

We use the following Zassenhaus-neighborhood form of the Kazhdan--Margulis theorem.

\begin{lemma}\label{lem:zassenhaus-neighborhood-current}
There exists a neighborhood $U$ of the identity in $\PSL(3,\C)$ such that, for every discrete subgroup $\Delta\subset\PSL(3,\C)$, the subgroup generated by $\Delta\cap U$ is nilpotent.
\end{lemma}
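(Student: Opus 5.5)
The plan is to prove the statement by the classical Zassenhaus argument for the Lie group $\PSL(3,\C)$. Everything is deduced from two elementary facts about the commutator map near the identity.

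First, fix a left-invariant Riemannian metric, or equivalently work in a chart given by the exponential map on a small ball of $\mathfrak{sl}(3,\C)$. The basic estimate is that the commutator $[x,y]=xyx^{-1}y^{-1}$ satisfies
\[
d([x,y],1)\le C\,d(x,1)\,d(y,1)
\]
for $x,y$ in a small neighborhood $V$ of the identity. This comes from the Taylor expansion of the group law in exponential coordinates: $[x,y]=\exp([X,Y]+O(|X||Y|(|X|+|Y|)))$. Shrinking $V$, we may assume $C\,d(x,1)\le 1/2$ on $V$. Then the commutator map sends $V\times V$ into $V$, and iterated commutators contract toward the identity: for $y\in V$ and $x\in V$, the sequence $y_0=y$, $y_{k+1}=[x,y_k]$ satisfies $d(y_k,1)\le 2^{-k}d(y,1)$. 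The same holds for any iterated commutator of length $k$ in elements of $V$.

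Now let $\Delta$ be discrete. Since $\Delta$ is discrete, the closure of $V$ (taken relatively compact) meets $\Delta$ in a finite set. The key step is then that any iterated commutator of length $k$ in elements of $\Delta\cap V$ lies in $\Delta\cap V$ and has distance at most $2^{-k}\epsilon$ from the identity. By discreteness it must equal $1$ once $2^{-k}\epsilon$ is smaller than the minimal distance from $1$ of the finitely many nontrivial elements of $\Delta\cap\overline V$. This $k$, however, depends on $\Delta$, which would only give nilpotency with an unbounded step. That suffices for the statement as written, since nilpotency of the generated group needs all commutators of length $k$ in the generators to vanish for some $k$.

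The subtle point, and the main obstacle, is passing from "iterated commutators of generators from $\Delta\cap U$ vanish" to "the group generated is nilpotent". The difficulty is that products of generators leave $V$. The standard remedy is to take $U\subset V$ smaller, and to argue by induction on the lower central series using the fact that $\Delta\cap U$ is finite. Let $S=\Delta\cap U$ and $H=\langle S\rangle$. We show that $\gamma_k(H)$ is generated by the conjugates of left-normed commutators $[s_1,[s_2,\dots,s_k]]$ with $s_i\in S$; this is a standard group-theoretic identity. By the contraction estimate, all such commutators lie in $V$ and tend to $1$ as $k$ grows, uniformly. Hence for large $k$ every such commutator is trivial, because $S$ is finite and the set of values at each length is finite and lies in the finite set $\Delta\cap\overline V$. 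So $\gamma_k(H)=1$.

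Alternatively, one can cite the Kazhdan--Margulis--Zassenhaus theorem directly (Raghunathan, \emph{Discrete subgroups of Lie groups}, Theorem 8.16). That theorem gives the existence of such a $U$ for any connected Lie group, and I would present the short self-contained sketch above as justification for the reader.
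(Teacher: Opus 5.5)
Your argument is correct. It differs from the paper only in that the paper gives no argument: it cites the standard Zassenhaus-neighborhood theorem (Kapovich's book, Chapter~4), which is also your fallback.

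Your self-contained route is the classical contraction proof. Take $V$ a small ball with $d([x,y],1)\le\tfrac12\,d(y,1)$ on $V\times V$. Then every weight-$k$ iterated commutator in $S=\Delta\cap V$ lies in the finite set $\Delta\cap\overline V$ at distance at most $2^{1-k}\epsilon$ from $1$. Hence all such commutators are trivial for some $k=k(\Delta)$.

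The only group-theoretic input is the identity $\gamma_k(H)=\langle\langle C_k(S)\rangle\rangle$ for $H=\langle S\rangle$. Here $C_k(S)$ is the set of weight-$k$ simple commutators in $S$, in either bracketing convention. Your $[s_1,[s_2,\dots,s_k]]$ is right-normed, not left-normed, but the identity holds for both. It deserves its one-line proof rather than the label ``standard''. Let $M$ be the normal closure of $C_{k+1}(S)$. In $H/M$ each element of $C_k(S)$ commutes with every generator, hence is central. So the image of $\langle\langle C_k(S)\rangle\rangle=\gamma_k(H)$ is central, and therefore $\gamma_{k+1}(H)\subseteq M$.

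Once this identity is in place, your fourth paragraph's concerns are unnecessary. Products of generators leaving $V$ is not an obstacle, and there is no need to shrink $U$; $U=V$ works.

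Compared with the citation, your argument is elementary and works verbatim in any Lie group. It yields only nilpotency, with a class depending on $\Delta$, rather than the stronger forms in the literature (e.g.\ containment in a connected nilpotent Lie subgroup). Nilpotency is all that is used in Proposition~\ref{prop:zassenhaus-usual-open-obstruction}, where the lemma is applied to a two-generator discrete group with both generators in $U$.
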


\begin{proof}
This is the standard Zassenhaus-neighborhood theorem for semisimple Lie groups; see, for example,~\cite[Chapter~4]{KapovichBook}.
\end{proof}

The next example shows that the Zariski-density statement does not extend to the usual topology.

\begin{proposition}\label{prop:zassenhaus-usual-open-obstruction}
There exist a representation $\rho_0\in\mathcal K_2$ and a neighborhood $\mathcal V$ of $\rho_0$ in the usual topology of $X_2$ such that, for every
\[
\rho\in\mathcal V\cap\mathcal K_2,
\]
the group $\rho(F_2)$ is nilpotent. In particular,
\[
\mathcal V\cap\mathcal U_2=\varnothing.
\]
Consequently $\mathcal U_2$, although Zariski dense, is not dense in $\mathcal K_2$ for the usual topology.
\end{proposition}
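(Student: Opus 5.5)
The plan is to take for $\rho_0$ a marking of a rank-two abelian diagonal group whose two generators both lie inside the Zassenhaus neighborhood $U$ of Lemma~\ref{lem:zassenhaus-neighborhood-current}, and then to take for $\mathcal V$ the open set of pairs with both entries in $U$.

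First fix $U$ as in Lemma~\ref{lem:zassenhaus-neighborhood-current}. For real numbers $s,t$ put
\[
a_s=\operatorname{diag}(e^{s},1,e^{-s}),\qquad b_t=\operatorname{diag}(e^{t},e^{-2t},e^{t}),
\]
and choose $s,t>0$ so small that $a_s,b_t\in U$. Let $\rho_0$ send the free basis of $F_2$ to $(a_s,b_t)$, and set $G_0=\rho_0(F_2)$. The logarithm of the moduli identifies $G_0$ with the subgroup of the plane $\{x_1+x_2+x_3=0\}\subset\mathbb R^3$ generated by $(s,0,-s)$ and $(t,-2t,t)$. These two vectors are linearly independent, so they span a lattice. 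The map from the diagonal group with positive real entries to this plane is a homeomorphism, hence $G_0$ is a discrete subgroup of $\PSL(3,\C)$ isomorphic to $\mathbb Z^2$, and it is not virtually cyclic.

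Next I would check that $\rho_0\in\mathcal K_2$, that is, $\Omega_{\Kul}(G_0)\neq\varnothing$. I expect this to be the only step needing real verification. My first attempt would cite the classification of commutative/diagonal discrete groups in~\cite{CNS13}. For such a group of real diagonal loxodromic elements the Kulkarni limit set is contained in the union of the three coordinate lines $\{z_1z_2z_3=0\}$. I would confirm this directly, using the logarithmic coordinates $(\log|z_1/z_3|,\log|z_2/z_3|)$ on the torus fibration of $\{z_1z_2z_3\ne0\}$. There $G_0$ acts by a lattice of translations, so on $\{z_1z_2z_3\neq0\}$ the action is properly discontinuous with compact torus-bundle quotient. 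The stabilizers of points of this set are trivial, and orbit accumulation occurs only on the coordinate lines, so $L_0\cup L_1$ lies in the coordinate triangle. One also has to verify that $L_2$ does not reach the open torus; for this I would rely on the explicit computation in~\cite{CNS13}. If the cited statement is inconvenient, I would replace it by the properly discontinuous action on the complement of the coordinate triangle together with the equicontinuity description~(2): every element of $\BSPG$ is a limit of diagonal matrices, so its kernel is a coordinate line or point.

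Then set $\mathcal V=(U\times U)\cap X_2$, which is open in the usual topology and contains $\rho_0$. Take $\rho=(h_1,h_2)\in\mathcal V\cap\mathcal K_2$. The group $\Delta=\rho(F_2)$ is discrete by the definition of $\mathcal K_2$, and $h_1,h_2\in\Delta\cap U$. Hence $\Delta=\langle h_1,h_2\rangle\subset\langle\Delta\cap U\rangle$, which is nilpotent by Lemma~\ref{lem:zassenhaus-neighborhood-current}, so $\Delta$ is nilpotent.

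Finally, for $\rho\in\mathcal U_2$ the map $\rho$ is injective by Lemma~\ref{lem:stable-regular-schottky}(i), so $\rho(F_2)\cong F_2$, which is not nilpotent. Therefore $\mathcal V\cap\mathcal U_2=\varnothing$. Since $\rho_0\in\mathcal K_2$ has a usual-topology neighborhood missing $\mathcal U_2$, the set $\mathcal U_2$ is not dense in $\mathcal K_2$ for the usual topology, even though it is Zariski dense by Theorem~4.
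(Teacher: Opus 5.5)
Your proposal is correct and follows the paper's proof: a diagonal $\mathbb Z^2$ with both generators in the Zassenhaus neighborhood $U$, lattice translations in logarithmic moduli on $\{z_1z_2z_3\neq0\}$ to place $\rho_0$ in $\mathcal K_2$, and $\mathcal V=U\times U$ with Lemma~\ref{lem:zassenhaus-neighborhood-current} forcing nilpotency. The differences are minor. First, your $b_t$ is a complex homothety rather than strongly loxodromic, which is harmless: the whole coordinate triangle still lies in $L_0\cup L_1$, since the line $\{z_2=0\}$ is fixed pointwise by $b_t$ and on the other two lines orbits either have infinite stabilizers or are dense along rays. Hence $\Ptwo\setminus(L_0\cup L_1)$ is exactly the open torus, and proper discontinuity there settles the $L_2$ step you deferred. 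Second, you exclude $\mathcal U_2$ via injectivity ($F_2$ is not nilpotent) instead of the paper's route through Mal'cev's virtual triangularizability and strong irreducibility, which is slightly more elementary.
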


\begin{proof}
Let $U$ be a Zassenhaus neighborhood as in Lemma~\ref{lem:zassenhaus-neighborhood-current}. For $t>0$ set
\[
a_t=
\left[
\begin{array}{ccc}
 e^t&0&0\\
 0&e^{-t}&0\\
 0&0&1
\end{array}
\right],
\qquad
b_t=
\left[
\begin{array}{ccc}
 1&0&0\\
 0&e^t&0\\
 0&0&e^{-t}
\end{array}
\right].
\]
Choose $t>0$ sufficiently small that $a_t,b_t\in U$, and define
\[
\rho_0:F_2=\langle x_1,x_2\rangle\longrightarrow\PSL(3,\C),
\qquad
\rho_0(x_1)=a_t,
\quad
\rho_0(x_2)=b_t.
\]
The two elements commute, and
\[
a_t^m b_t^n=
\left[
\begin{array}{ccc}
 e^{mt}&0&0\\
 0&e^{(-m+n)t}&0\\
 0&0&e^{-nt}
\end{array}
\right].
\]
On the invariant open set
\[
\mathcal O:=\{[z_1:z_2:z_3]\in\Ptwo:z_1z_2z_3\ne0\}\simeq(\C^*)^2,
\]
use the coordinates $u=z_1/z_3$ and $v=z_2/z_3$. Then
\[
a_t^m b_t^n(u,v)=
\bigl(e^{(m+n)t}u,e^{(-m+2n)t}v\bigr).
\]
On logarithmic moduli this is translation by
\[
t\,(m+n,-m+2n).
\]
The homomorphism
\[
\mathbb Z^2\longrightarrow\mathbb R^2,
\qquad
(m,n)\longmapsto t\,(m+n,-m+2n)
\]
has lattice image, since the defining linear map has determinant $3$. Hence
\[
G_0:=\rho_0(F_2)=\langle a_t,b_t\rangle\simeq\mathbb Z^2
\]
is discrete and acts properly discontinuously on $\mathcal O$. The same lattice-translation description also shows that no point of $\mathcal O$ belongs to any of the three Kulkarni sets $L_0,L_1,L_2$: stabilizers are trivial, orbits have no accumulation point in $\mathcal O$, and translates of compact subsets are locally finite there. Thus
\[
\mathcal O\subset\Omega_{\Kul}(G_0),
\]
so $G_0$ is a non-virtually-cyclic complex Kleinian group and $\rho_0\in\mathcal K_2$.

Let
\[
\operatorname{ev}:X_2\longrightarrow\PSL(3,\C)^2
\]
be evaluation on $(x_1,x_2)$ and put
\[
\mathcal V:=\operatorname{ev}^{-1}(U\times U).
\]
This is an open neighborhood of $\rho_0$ in the usual topology. If $\rho\in\mathcal V\cap\mathcal K_2$ and $G=\rho(F_2)$, then $G$ is discrete and both marked generators lie in $G\cap U$. Therefore
\[
G=\langle\rho(x_1),\rho(x_2)\rangle
\subseteq\langle G\cap U\rangle
\subseteq G,
\]
so
\[
G=\langle G\cap U\rangle.
\]
Lemma~\ref{lem:zassenhaus-neighborhood-current} implies that $G$ is nilpotent. A finitely generated nilpotent complex linear group is virtually triangularizable; equivalently, a finite-index subgroup preserves a complete projective flag~\cite{Malcev56}. Such a group is not strongly irreducible. Since every group in $\mathcal U_2$ is strongly irreducible, we obtain
\[
\mathcal V\cap\mathcal U_2=\varnothing.
\]
\end{proof}

\begin{corollary}\label{cor:no-algebraic-density-current}
The representation $\rho_0$ of Proposition~\ref{prop:zassenhaus-usual-open-obstruction} is not an algebraic limit of representations in $\mathcal U_2$. Thus the Zariski-density assertion of Theorem~4 does not extend to density for algebraic convergence.
\end{corollary}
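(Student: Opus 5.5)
The corollary follows from Proposition~\ref{prop:zassenhaus-usual-open-obstruction} once we recall that algebraic convergence on $F_2$ coincides with convergence in the usual topology of $X_2$. The plan is to argue by contradiction. Suppose there were representations $\rho_j\in\mathcal U_2$ converging algebraically to $\rho_0$. By definition, $\rho_j(\gamma)\to\rho_0(\gamma)$ for every $\gamma\in F_2$. In particular this holds on the free basis $x_1,x_2$, so $\operatorname{ev}(\rho_j)\to\operatorname{ev}(\rho_0)=(a_t,b_t)$ in $\PSL(3,\C)^2$.

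Since $\mathcal V=\operatorname{ev}^{-1}(U\times U)$ is an open neighborhood of $\rho_0$ in the usual topology, we get $\rho_j\in\mathcal V$ for all sufficiently large $j$. We also have $\rho_j\in\mathcal U_2\subset\mathcal K_2$, so $\rho_j\in\mathcal V\cap\mathcal U_2$. Proposition~\ref{prop:zassenhaus-usual-open-obstruction} says this intersection is empty, which is the desired contradiction.

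Alternatively, one can invoke the nilpotency conclusion directly. For large $j$, Proposition~\ref{prop:zassenhaus-usual-open-obstruction} makes $\rho_j(F_2)$ nilpotent. But Lemma~\ref{lem:strong-irr-open} shows that $\rho_j(F_2)$ is strongly irreducible, and a nilpotent group cannot be, by the Mal'cev argument already given in that proof.

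The second sentence of the corollary is then a restatement. Theorem~4 gives $\overline{\mathcal U_2}^{\,Z}=X_2$, yet $\rho_0\in\mathcal K_2$ lies outside the closure of $\mathcal U_2$ for the algebraic topology. Hence Zariski density does not upgrade to density for algebraic convergence.

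The only point that needs care is the identification of algebraic convergence with convergence in the usual topology of $X_r$, which is what places $\rho_j$ in $\mathcal V$. This was already noted before the proposition: a homomorphism from $F_r$ is determined by the images of a free basis, and the word maps are continuous. I expect no real obstacle beyond citing it.
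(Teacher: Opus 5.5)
Your argument is correct and matches the paper's proof: for $F_2$, algebraic convergence is convergence in the usual topology of $X_2$, and the usual-open neighborhood $\mathcal V=\operatorname{ev}^{-1}(U\times U)$ of $\rho_0$ from Proposition~\ref{prop:zassenhaus-usual-open-obstruction} is disjoint from $\mathcal U_2$, so no sequence in $\mathcal U_2$ converges algebraically to $\rho_0$. Your alternative nilpotency route also works (more directly, $\rho_j$ is injective, so $\rho_j(F_2)\cong F_2$ is not nilpotent); note, though, that the Mal'cev step is in the proof of that proposition, not in Lemma~\ref{lem:strong-irr-open}.
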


\begin{proof}
For $F_2$, algebraic convergence is the usual topology on $X_2$. The neighborhood $\mathcal V$ in Proposition~\ref{prop:zassenhaus-usual-open-obstruction} is open in the usual topology and is disjoint from $\mathcal U_2$.
\end{proof}

The same example gives an obstruction after forgetting the marking.

\begin{corollary}\label{cor:no-strong-density-current}
Let $G_0=\rho_0(F_2)\simeq\mathbb Z^2$. Then $G_0$ is not a strong limit of groups of the form $\rho(F_2)$ with $\rho\in\mathcal U_2$. In particular, the unmarked Zariski--Chabauty density theorem does not imply density for strong convergence.
\end{corollary}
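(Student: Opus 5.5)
We argue by contradiction, using the strong-convergence definition recalled above. Suppose $G_j=\rho_j(F_2)$ with $\rho_j\in\mathcal U_2$ converges strongly to $G_0=\langle a_t,b_t\rangle\simeq\mathbb Z^2$. By definition, for all sufficiently large $j$ there are surjective homomorphisms $\chi_j:G_0\to G_j$ with $\chi_j(g)\to g$ for every $g\in G_0$. The plan is to manufacture from the $\chi_j$ a sequence of marked representations converging to $\rho_0$ in the usual topology of $X_2$. Proposition~\ref{prop:zassenhaus-usual-open-obstruction} already rules out most of this sequence, and the abelian structure of $G_0$ rules out the rest.

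\textbf{Step 1 (the approximating groups are abelian).} Since $G_0$ is abelian and $\chi_j$ is a surjective homomorphism, each $G_j=\chi_j(G_0)$ is abelian, hence nilpotent. Every group in $\mathcal U_2$ is strongly irreducible by Lemma~\ref{lem:strong-irr-open}. A finitely generated abelian (indeed nilpotent) subgroup of $\PSL(3,\C)$ is virtually triangularizable by~\cite{Malcev56}, so a finite-index subgroup fixes a point of $\Ptwo$. This contradicts strong irreducibility. Alternatively, $G_j\simeq F_2$ because $\rho_j$ is injective by Lemma~\ref{lem:stable-regular-schottky}(i), and $F_2$ is nonabelian, so it cannot be a quotient of $\mathbb Z^2$. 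Either argument gives the contradiction immediately.

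\textbf{Step 2 (the route through the Zassenhaus neighborhood).} To make the link with Proposition~\ref{prop:zassenhaus-usual-open-obstruction} explicit, I would also record the marked route. Set $\rho_j'(x_i):=\chi_j(\rho_0(x_i))$. Then $\rho_j'\to\rho_0$ in $X_2$, and $\rho_j'(F_2)=G_j$ is discrete with nonempty ordinary set, so $\rho_j'\in\mathcal K_2$. For large $j$ we have $\rho_j'\in\mathcal V$, and the proposition forces $G_j$ to be nilpotent. Since $G_j$ is also the image of some $\rho_j\in\mathcal U_2$, this contradicts strong irreducibility exactly as in Step~1.

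The main point needing care is that the marking $\rho_j$ defining $G_j$ is not the marking $\rho_j'$ obtained from $\chi_j$. The contradiction must therefore come from a property of the image group, namely strong irreducibility or being nonabelian free, and not from membership of $\rho_j'$ in $\mathcal U_2$. Both arguments above use only properties of the group, so they handle this. The final sentence of the corollary then follows: the Zariski--Chabauty density theorem produces groups in every Zariski--Chabauty neighborhood of $G_0$, yet $G_0$ is not a strong limit of such groups.
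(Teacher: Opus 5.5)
Your proposal is correct, and the second argument in your Step~1 (a quotient of the abelian group $G_0$ cannot be the nonabelian free group $G_j\simeq F_2$, by injectivity of $\rho_j$) is exactly the paper's proof. The strong-irreducibility variant and the Zassenhaus route of Step~2 are also valid but redundant: Step~1 already closes the argument using only the surjectivity part of the definition of strong convergence.
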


\begin{proof}
Suppose that $G_j=\rho_j(F_2)$ with $\rho_j\in\mathcal U_2$ and that $G_j$ converges strongly to $G_0$. For all sufficiently large $j$, strong convergence gives a surjective homomorphism
\[
\chi_j:G_0\longrightarrow G_j.
\]
Since $G_0\simeq\mathbb Z^2$ is abelian, every quotient $G_j$ is abelian. On the other hand, $\rho_j$ is injective by Theorem~4, so $G_j\simeq F_2$ is nonabelian. This is impossible.
\end{proof}

Thus the two density statements are specific to the topologies in which they are formulated. The marked result is a Zariski-density statement, while the unmarked result uses the final Zariski--Chabauty topology. We make no claim about density for geometric convergence alone; in particular, the classical Chabauty topology remains separate from the final topology introduced above.

\section*{Funding}
The research of A. Cano was partially supported by SECIHTI-SNII 104023 and PAPIIT-UNAM IN112424.

\section*{Declarations}
All authors contributed equally to this work and declare no conflicts of interest.

\medskip
\noindent\textbf{Use of AI-assisted tools.} During the preparation of this manuscript, the authors used ChatGPT (OpenAI) for editorial assistance, structural review, and the examination of intermediate arguments and references. All mathematical statements, proofs, citations, and the final text were independently reviewed and approved by the authors, who take full responsibility for the content.

\section*{Acknowledgments}
The authors are indebted to Dennis Sullivan, Bill Goldman, Misha Kapovich, Alberto Verjovsky, John Parker, and Dick Canary for illuminating conversations, and to Mauricio Toledo-Acosta for helpful discussions. They also thank the Instituto de Matemáticas, UNAM, Unidad Cuernavaca, and the Facultad de Matemáticas, UADY, and their members for their hospitality and support during the preparation of this paper.

\bigskip
\noindent UADY, Anillo Periférico Norte, Tablaje Cat. 13615, Colonia Chuburná Hidalgo Inn, Mérida Yucatán, México.\\
\textit{Email address:} \texttt{bvargas@uady.mx}

\medskip
\noindent IMUNAM, Unidad Cuernavaca, Av. Universidad s/n. Col. Lomas de Chamilpa, C.P. 62210, Cuernavaca, Morelos, México.\\
\textit{Email address:} \texttt{angelcano@im.unam.mx}

\medskip
\noindent UADY, Anillo Periférico Norte, Tablaje Cat. 13615, Colonia Chuburná Hidalgo Inn, Mérida Yucatán, México.\\
\textit{Email address:} \texttt{jp.navarrete@uady.mx}

\medskip
\noindent IMUNAM, Unidad Cuernavaca, Av. Universidad s/n. Col. Lomas de Chamilpa, C.P. 62210, Cuernavaca, Morelos, México.\\
\textit{Email address:} \texttt{jseade@im.unam.mx}


\begin{thebibliography}{99}
\bibitem{Benoist97} Y. Benoist, Propriétés asymptotiques des groupes linéaires, \emph{Geom. Funct. Anal.} \textbf{7} (1997), no.~1, 1--47. \href{https://doi.org/10.1007/PL00001613}{doi:10.1007/PL00001613}.

\bibitem{BCN11} W. Barrera Vargas, Á. Cano, and J. P. Navarrete Carrillo, The limit set of discrete subgroups of $\PSL(3,\C)$, \emph{Math. Proc. Cambridge Philos. Soc.} \textbf{150} (2011), no.~1, 129--146. \href{https://doi.org/10.1017/S0305004110000423}{doi:10.1017/S0305004110000423}.

\bibitem{BCNfour11} W. Barrera, Á. Cano, and J. P. Navarrete, Subgroups of $\PSL(3,\C)$ with four lines in general position in its limit set, \emph{Conform. Geom. Dyn.} \textbf{15} (2011), 160--176. \href{https://doi.org/10.1090/S1088-4173-2011-00231-1}{doi:10.1090/S1088-4173-2011-00231-1}.

\bibitem{BCN16} W. Barrera, Á. Cano, and J. P. Navarrete, On the number of lines in the limit set for discrete subgroups of $\PSL(3,\C)$, \emph{Pacific J. Math.} \textbf{281} (2016), no.~1, 17--49. \href{https://doi.org/10.2140/pjm.2016.281.17}{doi:10.2140/pjm.2016.281.17}.

\bibitem{BGN18} W. Barrera, A. González-Urquiza, and J. P. Navarrete, Duality of the Kulkarni limit set for subgroups of $\PSL(3,\C)$, \emph{Bull. Braz. Math. Soc. (N.S.)} \textbf{49} (2018), 261--277. \href{https://doi.org/10.1007/s00574-017-0053-9}{doi:10.1007/s00574-017-0053-9}.

\bibitem{BCNS25} W. Barrera, Á. Cano, J. P. Navarrete, and J. Seade, Elementary groups in $\PSL(3,\C)$, in \emph{Geometry, Groups and Mathematical Philosophy}, Contemp. Math., vol.~811, American Mathematical Society, Providence, RI, 2025, pp.~31--47. \href{https://doi.org/10.1090/conm/811/16235}{doi:10.1090/conm/811/16235}.

\bibitem{CCL26} Á. Cano, L. A. Cano Cordero, and L. Loeza, Regular sets for discrete projective group actions on $\mathbb{CP}^2$, \emph{Bull. Braz. Math. Soc. (N.S.)} \textbf{57} (2026), Art.~28. \href{https://doi.org/10.1007/s00574-026-00516-4}{doi:10.1007/s00574-026-00516-4}.

\bibitem{CNS13} Á. Cano, J. P. Navarrete, and J. Seade, \emph{Complex Kleinian Groups}, Progr. Math., vol.~303, Birkhäuser/Springer, Basel, 2013. \href{https://doi.org/10.1007/978-3-0348-0481-3}{doi:10.1007/978-3-0348-0481-3}.

\bibitem{CS10} Á. Cano and J. Seade, On the equicontinuity region of discrete subgroups of $\mathrm{PU}(1,n)$, \emph{J. Geom. Anal.} \textbf{20} (2010), no.~2, 291--305. \href{https://doi.org/10.1007/s12220-009-9107-6}{doi:10.1007/s12220-009-9107-6}.

\bibitem{Chab50} C. Chabauty, Limite d'ensembles et géométrie des nombres, \emph{Bull. Soc. Math. France} \textbf{78} (1950), 143--151. \href{https://doi.org/10.24033/bsmf.1412}{doi:10.24033/bsmf.1412}.


\bibitem{CG00} J.-P. Conze and Y. Guivarc'h, Limit sets of groups of linear transformations, \emph{Sankhyā Ser. A} \textbf{62} (2000), no.~3, 367--385. \href{https://doi.org/10.2307/25051328}{doi:10.2307/25051328}.

\bibitem{DCP86} C. De Concini and C. Procesi, Cohomology of compactifications of algebraic groups, \emph{Duke Math. J.} \textbf{53} (1986), no.~3, 585--594. \href{https://doi.org/10.1215/S0012-7094-86-05333-0}{doi:10.1215/S0012-7094-86-05333-0}.

\bibitem{Vain84} I. Vainsencher, Complete collineations and blowing up determinantal ideals, \emph{Math. Ann.} \textbf{267} (1984), 417--432. \href{https://doi.org/10.1007/BF01456098}{doi:10.1007/BF01456098}.

\bibitem{Demailly12} J.-P. Demailly, \emph{Complex Analytic and Differential Geometry}, OpenContent Book, Universit\'e Grenoble Alpes, Grenoble, 2012. \href{https://www-fourier.univ-grenoble-alpes.fr/~demailly/manuscripts/agbook.pdf}{Official manuscript}.

\bibitem{FS94} J. E. Fornæss and N. Sibony, Complex dynamics in higher dimension. I, \emph{Astérisque} \textbf{222} (1994), 201--231. \href{https://doi.org/10.24033/ast.259}{doi:10.24033/ast.259}.

\bibitem{FS95} J. E. Fornæss and N. Sibony, Complex dynamics in higher dimension. II, in \emph{Modern Methods in Complex Analysis}, T. Bloom, D. W. Catlin, J. P. D'Angelo, and Y.-T. Siu, eds., Ann. of Math. Stud., vol.~137, Princeton Univ. Press, Princeton, NJ, 1995, pp.~135--182.

\bibitem{Green77} M. L. Green, The hyperbolicity of the complement of $2n+1$ hyperplanes in general position in $\mathbb P^n$, and related results, \emph{Proc. Amer. Math. Soc.} \textbf{66} (1977), no.~1, 109--113. \href{https://doi.org/10.1090/S0002-9939-1977-0457790-9}{doi:10.1090/S0002-9939-1977-0457790-9}.

\bibitem{KapovichBook} M. Kapovich, \emph{Hyperbolic Manifolds and Discrete Groups}, Progr. Math., vol.~183, Birkh\"auser Boston, Boston, MA, 2001.

\bibitem{KapovichSequences} M. Kapovich, On sequences of finitely generated discrete groups, in \emph{In the Tradition of Ahlfors--Bers, V}, Contemp. Math., vol.~510, Amer. Math. Soc., Providence, RI, 2010, pp.~165--184. \href{https://doi.org/10.1090/conm/510/10024}{doi:10.1090/conm/510/10024}.




\bibitem{KLP18} M. Kapovich, B. Leeb, and J. Porti, Dynamics on flag manifolds: domains of proper discontinuity and cocompactness, \emph{Geom. Topol.} \textbf{22} (2018), no.~1, 157--234. \href{https://doi.org/10.2140/gt.2018.22.157}{doi:10.2140/gt.2018.22.157}.

\bibitem{KLP25} M. Kapovich, B. Leeb, and J. Porti, Morse actions of discrete groups on symmetric spaces: local-to-global principle, \emph{Geom. Topol.} \textbf{29} (2025), no.~5, 2343--2390. \href{https://doi.org/10.2140/gt.2025.29.2343}{doi:10.2140/gt.2025.29.2343}.



\bibitem{Kob98} S. Kobayashi, \emph{Hyperbolic Complex Spaces}, Grundlehren Math. Wiss., vol.~318, Springer, Berlin, 1998. \href{https://doi.org/10.1007/978-3-662-03582-5}{doi:10.1007/978-3-662-03582-5}.

\bibitem{Kul78} R. S. Kulkarni, Groups with domains of discontinuity, \emph{Math. Ann.} \textbf{237} (1978), no.~3, 253--272. \href{https://doi.org/10.1007/BF01420180}{doi:10.1007/BF01420180}.



\bibitem{Malcev56} A. I. Mal'cev, On certain classes of infinite soluble groups, \emph{Amer. Math. Soc. Transl.} (2) \textbf{2} (1956), 1--21; originally published in \emph{Mat. Sb.} \textbf{28} (1951), 567--588.

\bibitem{McMullen99} C. T. McMullen, Hausdorff dimension and conformal dynamics. I. Strong convergence of Kleinian groups, \emph{J. Differential Geom.} \textbf{51} (1999), no.~3, 471--515. \href{https://doi.org/10.4310/jdg/1214425139}{doi:10.4310/jdg/1214425139}.

\bibitem{Myrberg25} P. J. Myrberg, Untersuchungen über die automorphen Funktionen beliebig vieler Variablen, \emph{Acta Math.} \textbf{46} (1925), no.~3--4, 215--336. \href{https://doi.org/10.1007/BF02564065}{doi:10.1007/BF02564065}.

\bibitem{Wu67} H. Wu, Normal families of holomorphic mappings, \emph{Acta Math.} \textbf{119} (1967), 193--233. \href{https://doi.org/10.1007/BF02392083}{doi:10.1007/BF02392083}.

\bibitem{SV01} J. Seade and A. Verjovsky, Actions of discrete groups on complex projective spaces, in \emph{Geometric Topology and Geometry of Manifolds}, Contemp. Math., vol.~269, Amer. Math. Soc., Providence, RI, 2001, pp.~155--178. \href{https://doi.org/10.1090/conm/269/04332}{doi:10.1090/conm/269/04332}.

\bibitem{SV02} J. Seade and A. Verjovsky, Higher dimensional complex Kleinian groups, \emph{Math. Ann.} \textbf{322} (2002), 279--300. \href{https://doi.org/10.1007/s002080100247}{doi:10.1007/s002080100247}.

\bibitem{SV03} J. Seade and A. Verjovsky, Complex Schottky groups, in \emph{Geometric Methods in Dynamics. II}, Astérisque, vol.~287, Soc. Math. France, Paris, 2003, pp.~251--272. \href{https://doi.org/10.24033/ast.597}{doi:10.24033/ast.597}; \href{https://www.numdam.org/item/AST_2003__287__251_0/}{Numdam}.

\end{thebibliography}
\end{document}